%Oxford 30 June 2010; end of 2 July 2010; 4July2010; LHR 7 July; Utah 11 July: Oxford 13 July;Ithaca 7 Dec 2010; osu; croatia; Oxford 28 July
\documentclass[12pt,a4paper]{amsart}
\usepackage{geometry}
\usepackage[all]{xy}
\geometry{letterpaper}
\usepackage{graphicx}
\usepackage{amssymb, amsthm}
\usepackage{epstopdf}
\DeclareGraphicsRule{.tif}{png}{.png}{`convert #1 `dirname #1`/`basename #1 .tif`.png}

\def\S{\Sigma}

\def\R{\mathbb{R}}
\def\Z{\mathbb{Z}}

\def\C{\mathbb{C}}
\def\G{\Gamma}

\def\<{\langle}
\def\>{\rangle}
\def\e{\varepsilon}
\def\a{\alpha}

\def\he{{\rm{HE}}}

\def\out{{\rm{Out}}(F_n)}
\def\aut{{\rm{Aut}}(F_n)}

\def\outm{{\rm{Out}}(F_m)}
\def\autm{{\rm{Aut}}(F_m)}

\def\Autn{{\rm{Aut}}(F_n)}

\def\dd{{\rm{deck}}}

 %cage graph
\newcommand{\cyc}{\theta} % isometry of X_n
\newcommand{\Cyc}{\Theta_n} % group of Out generated by \cyc
\newcommand{\hCyc}{\widehat\Theta_n}
\newcommand{\Oh}{\bf 0} % origin in R^n
\newcommand{\Out}{{\rm{Out}}(F_n)}
\newcommand{\deck}{{\rm Deck}} % Deck transformation group

\def\he{{\rm{he}}}
\def\HE{{\rm{HE}}}% group of homotopy classes of homotopy equivalences
\def\fhe{{\rm{fhe}}}
\def\FHE{{\rm{FHE}}}% group of fiber-preserving homotopy classes of fiber-preserving homotopy equivalences

\newtheorem{thmA}{Theorem}

\newtheorem{corA}{Corollary}

\newtheorem{theorem}{Theorem}[section]

\newtheorem{lemma}[theorem]{Lemma}
\newtheorem{proposition}[theorem]{Proposition}

\newtheorem{corollary}[theorem]{Corollary}
\newtheorem{definition}[theorem]{Definition}

\newtheorem{remark}[theorem]{Remark}
\newtheorem{example}[theorem]{Example}
\newtheorem*{notation}{Notation}

\begin{document}

% this gets rid of the AMS series logo and copyright
\catcode`\@=11
\def\serieslogo@{\relax}
\def\@setcopyright{\relax}
\catcode`\@=12

\def\jump{\vskip 0.3cm}
\title[Maps ${\rm{Out}}(F_n)\to {\rm{Out}}(F_m)$]{Abelian covers of
graphs and maps between outer automorphism groups
of free groups}

% author one information
\author[Bridson]{Martin R.~Bridson}
\address{Martin R.~Bridson\\
Mathematical Institute \\
24-29 St Giles' \\
Oxford OX1 3LB \\
U.K. }
\email{bridson@maths.ox.ac.uk}

% author two information
\author[Vogtmann]{Karen Vogtmann }
\address{Karen Vogtmann\\
Department of Mathematics\\
Cornell University\\
Ithaca NY 14853 }
\email{vogtmann@math.cornell.edu}

\thanks{Bridson's work was supported by an EPSRC Senior Fellowship.
Vogtmann is supported NSF grant DMS-0204185.}

\subjclass{20F65, 20F28, 53C24, 57S25}

%\date{14 July 2010; 2 May 2011; 2 July; 28 July}

\keywords{automorphism groups of free groups,
group actions on graphs}

\begin{abstract} We explore the existence of homomorphisms
between outer automorphism groups of free groups $\out\to\outm$.
We prove that if $n>8$ is even and $n\neq m\le 2n$,
or $n$ is odd and $n\neq m\le 2n-2$, then all such homomorphisms
have finite image; in fact they factor
through ${\rm{det}}\colon \out\to \Z/2$.
In contrast, if $m=r^n(n-1)+1$ with $r$ coprime to
$(n-1)$,
then there exists an embedding $\out\hookrightarrow\outm$. In order
to prove this last statement, we
determine when the action of $\out$ by homotopy equivalences
on a graph of genus $n$ can be lifted to an action on a normal covering with
abelian Galois group.
\end{abstract}
\maketitle

\section{Introduction}
The contemporary study of mapping class groups and outer automophism
groups of free groups is heavily influenced by the analogy between
these groups  and lattices in semisimple
Lie groups. In previous papers \cite{BV1, BV2, BV3} we have explored rigidity properties of $\out$ in this light, proving in particular that
if $m<n$  then any homomorphism $\out\to\outm$  has image  at most $\Z_2$, and that the only monomorphisms  $\out\to\out$ are the inner automorphisms.  
In this paper we turn our attention to the case $m>n$.

There are two obvious ways in which one might embed $\aut$ in $\autm$
when $m>n$: most obviously, the inclusion $F_n\subset F_m$ of any free factor induces a monomorphism $\aut\hookrightarrow\autm$; secondly, if
$N\subset F_n$ is a characteristic subgroup of finite index,
then the restriction map $\aut\to{\rm{Aut}}(N)\cong\autm$ is injective (Lemma~\ref{l:inj}).
Neither of these constructions sends the group of inner
automorphisms ${\rm{Inn}}(F_n)\subset\aut$ into ${\rm{Inn}}(F_m)$,
so there is no induced map $\out\hookrightarrow\outm$. In the second case one
can often remedy this problem by passing to a subgroup of finite index in $\out$.
Thus in Proposition \ref{p:finiteInd} we prove that if $m=d(n-1)+1$ for some $d\ge 1$, then $\out$ 
has a subgroup of finite index that embeds in  $\outm$; for example
a finite-index subgroup of $\out$ embeds in ${\rm{Out}}(F_{2n-1})$.
But if we demand that our homomorphisms be defined on the whole of $\out$,
then it is far from obvious that there are {\em{any}} maps
$\out\to\outm$ with infinite image when $m>n\ge 3$.

As usual,
the case $n=2$ is exceptional: ${\rm{Out}}(F_2)
={\rm{GL}}(2,\Z)$ maps to $(\Z/2)\ast(\Z/3)$ with finite kernel,
so to obtain a map with infinite image one need only choose
elements of order $2$ and $3$ that generate an infinite subgroup of $\outm$.
Khramtsov \cite{Kh} gives an explicit monomomorphism ${\rm{Out}}( F_2)\to {\rm{Out}}( F_4)$. More interestingly, he proved that there are no
injective maps from $\out$ to ${\rm{Out}}(F_{n+1})$. {\em So, for given
$n$, for which values of $m$ is there a monomorphism
$\out\to{\rm{Out}}(F_m)$, and for which values of $m$ do all
maps $\out\to{\rm{Out}}(F_m)$ have finite image?} These are the
questions that we address in this article. In the first part of the
paper we give explicit constructions of embeddings, and in the
second half we prove, among other things,
that no homomorphism $\out\to\outm$ can have image bigger than $\Z/2$
if $n$ is even and $8<n<m\le 2n$. This last result disproves a
conjecture of Bogopolski and Puga \cite{BP}.

In  order to construct embeddings,
we consider characteristic subgroups $N<F_n$, identify $F_n$
with the subgroup of $\aut$ consisting of inner automorphisms, and
examine the short exact sequence
$$1\to F_n/N\to
\aut/N\to \out\to 1.$$
We want to understand when this sequence splits. When it does
split, one can compose the splitting map $\out\to\aut/N$ with
the map $\aut/N\to {\rm{Out}}(N)$ induced by restriction, $\phi\to [\phi|_N]$,
to obtain an embedding of $\out$ into ${\rm{Out}}(N)$.

Bogopolski and Puga   \cite{BP} used algebraic methods to
obtain a splitting in the case where $F_n/N\cong (\Z/r)^n$ with
$r$ odd and coprime to $(n-1)$, yielding
embeddings $\out\hookrightarrow{\rm{Out}}( F_m)$ when $m=r^n(n-1)+1$.
We do not follow their arguments.  Instead we adopt a geometric approach which begins with
a translation of the above splitting problem into a lifting
problem for groups of homotopy equivalences of graphs.
Proposition \ref{p:sameProb} provides a precise formulation of
this translation. (The topological background to it is difficult to
pin down in the literature, so we explain it in detail
in an appendix.)

The following theorem is the main result in the first half
of this paper.

\begin{thmA}\label{t:main}
Let $\widehat X\to X$ be a normal
covering of a
connected graph of genus $n\ge 2$ with abelian Galois
group $A$.
The action of $\out$ by homotopy equivalences on $X$ lifts to
an action by fiber-preserving homotopy equivalences on $\widehat X$ if and only if
$A\cong(\Z/r)^n$ with $r$  coprime to $n-1$.
\end{thmA}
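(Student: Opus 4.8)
The plan is to use Proposition~\ref{p:sameProb} to replace the lifting problem by the question of whether the extension $1\to F_n/N\to\aut/N\to\out\to 1$ splits, and then to analyse that extension. First I would pin down the shape of $A$. If the $\out$-action on $X$ lifts to fibre-preserving homotopy equivalences of $\widehat X$, then every homotopy equivalence of $X$ representing an element of $\out$ must preserve $N=p_*\pi_1\widehat X\le\pi_1X\cong F_n$; since $N$ is normal it is therefore characteristic in $F_n$. As $A=F_n/N$ is abelian we have $N\supseteq[F_n,F_n]$, and $N/[F_n,F_n]$ is a subgroup of $H_1(F_n)\cong\Z^n$ carried to itself by the induced action of $\aut$, which is the full action of $\gln$. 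The only $\gln$-invariant subgroups of $\Z^n$ are the subgroups $r\Z^n$ ($r\ge 0$), so $A\cong(\Z/r)^n$. From now on $N/[F_n,F_n]=r\Z^n$, the induced $\out$-action on $(\Z/r)^n$ is the standard one through $\out\to\gln\to{\rm{GL}}(n,\Z/r)$, and by Proposition~\ref{p:sameProb} the action lifts if and only if this extension splits.

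Next I would reduce to a single cohomological statement. The class of $1\to(\Z/r)^n\to\aut/N\to\out\to 1$ in $H^2(\out;(\Z/r)^n)$ is the image, under reduction $\Z^n\to(\Z/r)^n$, of the class $\eta\in H^2(\out;\Z^n)$ of the universal extension $1\to\Z^n\to\aut/\mathrm{Inn}[F_n,F_n]\to\out\to 1$; equivalently, from the five-term exact sequence of $1\to F_n\to\aut\to\out\to 1$ with coefficients $\Z^n$, the class $\eta$ is the transgression of the identity, the generator of $H^1(F_n;\Z^n)^{\out}=\mathrm{End}_{\out}(\Z^n)=\Z$. So everything comes down to: \emph{for which $r$ does $\eta$ vanish modulo $r$?}

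For the ``if'' direction I would show that $\gcd(r,n-1)=1$ forces $\eta\bmod r=0$ by constructing the splitting explicitly. Realise $\out$ on the rose $R_n$ and lift the standard generators (transvections, permutations, inversions) by the lifts fixing a chosen vertex $\widehat v_0$ over the vertex of $R_n$; the failure of multiplicativity of this assignment is exactly $\eta\bmod r$ evaluated on the $\aut$-cocycle $w(\phi,\psi)$ (where $\tilde\phi\tilde\psi=\mathrm{ad}_{w(\phi,\psi)}\,\widetilde{\phi\psi}$). One then corrects each lift by a deck translation $a(\phi)\in(\Z/r)^n$ determined by a fixed-point (barycentric) condition on the affine action of the chosen lift on the fibre over $\widehat v_0$; solving the linear system defining $a$ requires inverting a matrix whose determinant is, up to sign, a power of $n-1$, which is possible precisely because $\gcd(r,n-1)=1$. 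Verifying that $\phi\mapsto a(\phi)\cdot(\text{chosen lift})$ is a homomorphism is then a cocycle identity that unwinds to the defining relation of $w$, and this produces both the splitting and the embeddings $\out\hookrightarrow\outm$ with $m=r^n(n-1)+1$.

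The ``only if'' direction is where the real work lies, and I expect it to be the main obstacle. Here one must show that if a prime $p$ divides $\gcd(r,n-1)$ then $\eta\bmod p\ne 0$ in $H^2(\out;(\Z/p)^n)$. The subtlety is that this obstruction is invisible on finite subgroups: if $H\le\out$ is finite then the preimage of $H$ in $\aut$ realises the outer action of $H$ by an extension of $H$ by the centreless group $F_n$, hence is the split extension $F_n\rtimes H$, and so the preimage of $H$ in $\aut/N$ is $(F_n/N)\rtimes H$, which splits over $H$. Thus the non-triviality of $\eta\bmod p$ must be established globally --- by restricting to a suitably chosen infinite (virtually free) subgroup of $\out$ on which $H^2$ with these coefficients can be controlled, or by a direct cochain computation exhibiting $\eta\bmod p$ as a non-coboundary. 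Pinning down enough of $H^2(\out;(\Z/p)^n)$ to see that $\eta$ survives exactly when $p\mid n-1$ is the crux; it is this dividing line that determines which abelian covers carry an $\out$-action.
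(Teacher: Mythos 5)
Your ``only if'' direction contains a genuine error that derails the whole second half of the argument. You claim that for any finite subgroup $H\le\out$ the preimage of $H$ in $\aut$ is the split extension $F_n\rtimes H$ ``because $F_n$ is centreless,'' and you conclude that the obstruction class is invisible on finite subgroups and must therefore be detected globally in $H^2(\out;(\Z/p)^n)$. This confuses two different facts: for a centreless kernel, the extension realizing a given outer action is \emph{unique up to equivalence}, but it splits if and only if the outer action lifts to an honest action on $F_n$ --- which can fail for finite $H$. Indeed the paper's proof of non-splitting works precisely by restricting to a finite subgroup: the cyclic group $\Cyc<\out$ of order $n-1$ generated by the class of $\theta\colon(a_0,a_1,\dots,a_{n-1})\mapsto(a_0,a_2,\dots,a_{n-1},a_0a_1a_0^{-1})$, whose $(n-1)$-st power is the inner automorphism by $a_0$. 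A short computation (geometric, via the graph with an $(n-1)$-cycle and a loop at each vertex, or via a presentation of the preimage $\widehat\Theta_n\cong\Z^{n-1}\rtimes\Z$) shows that any element of $\aut/N$ mapping to $\theta$ has $(n-1)$-st power equal to $\a_0^{m_0(n-1)+1}\mu^{*}$, and $m_0(n-1)+1\equiv 0\bmod r$ is solvable exactly when $\gcd(r,n-1)=1$. So the preimage of $\Cyc$ in $\aut/F_n'$ is torsion-free and the sequence cannot split when $\gcd(r,n-1)>1$. Because your false premise rules out this route, you are left proposing a computation of $H^2(\out;(\Z/p)^n)$ that you do not carry out; as it stands the ``only if'' direction is unproved.

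Your identification of the possible $A$ (via $\gln$-invariant subgroups of $\Z^n$) agrees with the paper, and your ``if'' direction is in the same spirit as the paper's (correct the basepoint-fixing lifts on the abelian cover of the rose by translations built from $s$ with $s(n-1)\equiv 1\bmod r$). But it too is only a sketch: a literal fixed-point condition on the affine action of a lift over the fibre fails for transvections (the matrix $I-A$ is singular there), so the ``barycentric'' recipe for the correcting cochain $a(\phi)$ does not obviously exist; the paper instead makes an explicit choice of shifts for $\rho_{ij}$, $\lambda_{ij}$, $\tau$ and verifies Gersten's relations by matrix computation, with the coprimality entering only through the relation killing the inner automorphisms, where the product of $n-1$ shifted factors yields a translation by $(n-1)s\equiv 1\bmod r$ that can be absorbed into a deck transformation.
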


When translated back into algebra,
this theorem is equivalent to the statement that if a characteristic subgroup
$N<F_n$ contains the commutator subgroup $F_n'=[F_n,F_n]$, then
the short exact sequence $1\to F_n/N \to \aut/N \to\out \to 1$ splits if and only if $N=F_n'F_n^r$, where $F_n^r$ is the subgroup
generated by $r$-th powers and $r>1$ is coprime to $n-1$.
The sufficiency of this condition extends
Bogopolski and Puga's theorem  to cover the case
where $r$ is even.

\begin{corA}\label{c:embed} There exists an embedding
$\out\hookrightarrow \outm$ for any $m$ of the
form $m=r^n(n-1)+1$ with $r>1$ coprime to $n-1$.
\end{corA}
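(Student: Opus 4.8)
The plan is to deduce the corollary directly from Theorem \ref{t:main} together with the translation into algebra described in the paragraph preceding the statement. Concretely, I would proceed as follows. Given $r>1$ coprime to $n-1$, set $N = F_n' F_n^r$, the subgroup of $F_n$ generated by the commutator subgroup and all $r$-th powers. This $N$ is characteristic, contains $F_n'$, and the quotient $F_n/N \cong (\Z/r)^n$ is finite, so $N$ has finite index in $F_n$. Since $N$ is characteristic, restriction gives a homomorphism $\aut \to \mathrm{Aut}(N)$, and because $N$ is free of rank $m = r^n(n-1)+1$ by the Nielsen--Schreier formula (the index of $N$ is $r^n$, so $\mathrm{rank}(N) - 1 = r^n(n-1)$), this is a map $\aut \to \autm$, which is injective by Lemma \ref{l:inj}.

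The key point is to promote this to a map on the level of $\out$. Consider the short exact sequence
\[
1 \to F_n/N \to \aut/N \to \out \to 1,
\]
where $F_n \subset \aut$ is identified with $\mathrm{Inn}(F_n)$. By Theorem \ref{t:main} — in its algebraic reformulation stated in the excerpt — the hypothesis that $r$ is coprime to $n-1$ guarantees that this sequence splits; let $s\colon \out \to \aut/N$ be a splitting homomorphism. The covering $\widehat{X}\to X$ corresponding to $N$ is a normal cover with Galois group $A = F_n/N \cong (\Z/r)^n$, and Theorem \ref{t:main} says precisely that the $\out$-action on $X$ lifts to a fiber-preserving action on $\widehat{X}$; via Proposition \ref{p:sameProb} this lift is exactly the splitting $s$. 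Now compose $s$ with the map $\aut/N \to \mathrm{Out}(N)$ induced by $\phi \mapsto [\phi|_N]$. I claim the composite $\out \to \mathrm{Out}(N) \cong \outm$ is injective: an element of the kernel would be represented by some $\phi \in \aut$ whose restriction to $N$ is inner in $N$, say conjugation by $w \in N$; replacing $\phi$ by $\phi \circ (\text{conj. by } w^{-1})$ — which is legitimate since $w \in N \subset F_n$ acts trivially in $\out$ — we may assume $\phi|_N = \mathrm{id}_N$, and then $\phi$ fixes a finite-index subgroup pointwise. A standard argument (this is essentially the content of Lemma \ref{l:inj}) shows such a $\phi$ must be inner in $F_n$, hence trivial in $\out$; combined with the fact that $s$ is a splitting, injectivity follows.

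The main obstacle, and the only substantive input, is Theorem \ref{t:main} itself — specifically the \emph{sufficiency} direction, that $A \cong (\Z/r)^n$ with $\gcd(r,n-1)=1$ forces the lifting/splitting to exist; everything else in the corollary is bookkeeping with covering spaces, the Nielsen--Schreier formula, and the elementary rigidity statement in Lemma \ref{l:inj}. It is worth recording explicitly that the values of $m$ obtained are exactly $m = r^n(n-1)+1$: for $n$ fixed this is an infinite but sparse set, recovering the Bogopolski--Puga family for odd $r$ and extending it to even $r$ (the case $r=2$, $n\ge 3$ already giving $m = 2^n(n-1)+1$). One small point to verify is that $N = F_n'F_n^r$ really is the kernel of the natural map $F_n \to (\Z/r)^n$, i.e. that $F_n/F_n'F_n^r \cong (\Z/r)^n$; this is immediate since $F_n/F_n' \cong \Z^n$ and quotienting further by $r$-th powers yields $(\Z/r)^n$.
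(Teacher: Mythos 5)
Your proposal is correct and follows essentially the same route as the paper: take the splitting $\out\to\aut/N$ provided by Theorem~\ref{t:main} (via Theorem~\ref{t:split}), compose with the restriction-induced map $\aut/N\to{\rm{Out}}(N)\cong\outm$, and use the unique-roots argument of Lemma~\ref{l:inj} to get injectivity. Your explicit verification that the composite is injective (adjusting $\phi$ by an inner automorphism from $N$ so that $\phi|_N=\mathrm{id}$) is exactly the detail the paper leaves to the reader.
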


The negative part of Theorem \ref{t:main} also has an intriguing
application. It tells us  that
$1\to F_n/F_n' \to \aut/F_n' \to\out \to 1$
does not split. Thus this sequence defines a non-zero class
in the second cohomology group of $\out$ with coefficients
in  the module $M:=F_n/F_n'$ (i.e.~the standard left
$\out$-module  $H_1(F_n)$).  The theorem
also assures us that this class remains
non-trivial when we take coefficients in $M/rM$,
provided that  $r$ is not coprime to $(n-1)$.
The non-triviality of these classes provides a striking counterpoint to what happens
when one takes coefficients in the dual module $M^*=H^1(F_n)$,
as we shall explain in Section \ref{s:Mdual}.

\begin{thmA}\label{t:cohomology} Let $M=H_1(F_n)$ be the standard $\out$-module and let
$M^*$ be its dual. Then $H^2(\out, M)\neq 0$,
but $H^2(\out, M^*)=0$ if $n\ge 12$.
\end{thmA}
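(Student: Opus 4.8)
The first assertion requires no new work. By Theorem~\ref{t:main}, in its algebraic reformulation, the exact sequence $1\to F_n/F_n'\to\aut/F_n'\to\out\to1$ does not split, because $F_n/F_n'\cong\Z^n$ is not isomorphic to any $(\Z/r)^n$. Since $F_n/F_n'$ is abelian and the conjugation action of $\aut/F_n'$ on it factors through the standard action of $\out$ on $M=H_1(F_n)$, this extension determines a class in $H^2(\out;M)$ that is nonzero precisely because the sequence has no section. Hence $H^2(\out;M)\neq0$.

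For the vanishing statement the plan is to pass first from $\out$ to $\aut$. Inner automorphisms act trivially on $H^1(F_n)$, so $M^*$ is an $\aut$-module via $\aut\to\out$, and I would feed the extension $1\to F_n\to\aut\to\out\to1$ into the Lyndon--Hochschild--Serre spectral sequence with these coefficients. Because $F_n$ has cohomological dimension $1$ and acts trivially on $M^*$, only the rows $q=0,1$ survive, with $E_2^{p,0}=H^p(\out;M^*)$ and $E_2^{p,1}=H^p(\out;{\rm Hom}(M,M^*))=H^p(\out;M^*\otimes M^*)$; the two rows organise into the long exact sequence
$$\cdots\to H^{k-2}(\out;M^*\otimes M^*)\xrightarrow{\,d_2\,}H^{k}(\out;M^*)\to H^{k}(\aut;M^*)\to H^{k-1}(\out;M^*\otimes M^*)\to\cdots.$$
The one genuinely elementary input is that $E_2^{0,1}=(M^*\otimes M^*)^{\out}=0$: because $\out\to\GL$ is onto, such an invariant is a $\GL$-invariant bilinear form on $\Z^n$, and invariance under the diagonal sign matrices, the permutation matrices, and one elementary transvection forces it to vanish. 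Feeding this into the sequence at $k=2$ shows that inflation $H^2(\out;M^*)\to H^2(\aut;M^*)$ is injective, so it suffices to prove $H^2(\aut;M^*)=0$ for $n\ge12$.

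To kill $H^2(\aut;M^*)$ I would run a second Lyndon--Hochschild--Serre spectral sequence, now for the Andreadakis extension $1\to{\rm IA}_n\to\aut\to\GL\to1$, in which ${\rm IA}_n$ again acts trivially on $M^*$. On the $E_2$-page, row $0$ is $H^p(\GL;M^*)$, row $1$ is $H^p(\GL;{\rm Hom}(H_1({\rm IA}_n),M^*))$---where one invokes the classical identification $H_1({\rm IA}_n)\cong{\rm Hom}(H_1(F_n),\wedge^2H_1(F_n))$ (Magnus, Andreadakis, Cohen--Pakianathan, Farb, Kawazumi)---and the only contribution on row $2$ relevant in total degree $2$ is a subquotient of $(H^2({\rm IA}_n)\otimes M^*)^{\GL}$. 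Every coefficient module here is assembled from copies of the standard $\GL$-representation and its dual, and the plan is to combine Borel's vanishing theorem for the cohomology of $\GL$ with nontrivial coefficients (valid rationally in a range of degrees growing with $n$) with Charney--van~der~Kallen homological stability to conclude that all of these groups vanish once $n$ is large; the threshold $n\ge12$ should be exactly what is needed to place degree-$2$ cohomology with coefficients of polynomial degree up to $2$ inside this stable range. An alternative for this step is homological stability for $\aut$ with twisted coefficients together with the triviality of the stable cohomology of ${\rm Aut}(F_\infty)$.

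The hard part is precisely the vanishing of $H^2(\aut;M^*)$. Borel's theorems are rational, so the integral statement requires separately controlling $p$-torsion in the arithmetic-group cohomology groups that appear---most cleanly by re-running the same spectral-sequence argument with $\mathbb{F}_p$-coefficients and citing the corresponding mod-$p$ stability and vanishing results---and one also needs enough information about $H^2({\rm IA}_n)$ as a $\GL$-module to see that its invariant part contributes nothing. Extracting the clean bound $n\ge12$, rather than a larger one, from the various stability estimates is the delicate bookkeeping at the end; everything earlier---the reduction to $\aut$ and the disappearance of the $M^*\otimes M^*$ term---is soft.
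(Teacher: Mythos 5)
Your first paragraph matches the paper: the non-split extension $1\to M\to\aut/F_n'\to\out\to 1$ from Theorem~\ref{t:main} is exactly what the paper uses to get $H^2(\out;M)\neq 0$. The problem is the vanishing statement, where your argument has a genuine gap. You reduce (correctly, via $(M^*\otimes M^*)^{\out}=0$) to showing $H^2(\aut;M^*)=0$, but you never establish this; you only sketch a strategy through $1\to{\rm IA}_n\to\aut\to\GL\to 1$ that does not go through. The $E_2^{0,2}$ term of that spectral sequence involves $H^2({\rm IA}_n)$ as a $\GL$-module, which is not known (only $H_1({\rm IA}_n)$ has the classical description you cite); Borel's vanishing theorems are rational, so they cannot by themselves kill an integral cohomology group; and the threshold $n\ge 12$ has no way of emerging from that chain of results. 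So the central step of the second half is missing, not merely deferred.

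The paper sidesteps all of this with a different and much softer use of the same extension $1\to F_n\to\aut\to\out\to 1$: it runs the Lyndon--Hochschild--Serre spectral sequence with \emph{trivial} $\Z$ coefficients rather than with coefficients in $M^*$. Then $E_2^{p,0}=H^p(\out;\Z)$ and $E_2^{p,1}=H^p(\out;H^1(F_n))=H^p(\out;M^*)$, so the module $M^*$ appears automatically as the local system on the $q=1$ row, and the sequence converges to $H^*(\aut;\Z)$. The only input needed is that inflation $H^p(\out;\Z)\to H^p(\aut;\Z)$ is an isomorphism for $p\le 4$ when $n\ge 12$ (homology stability of Hatcher--Vogtmann and its erratum, with range $n\ge 2p+4$); this forces $d_2=0$ and $E_\infty^{2,1}=H^2(\out;M^*)=0$. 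In other words, the quantity you are trying to compute is an $E_2$-term, not the abutment, and no knowledge of $H^*(\aut;M^*)$ or of ${\rm IA}_n$ is required. If you want to salvage your route, you would have to replace the ${\rm IA}_n$/Borel step by an actual computation of $H^2(\aut;M^*)$, which is considerably harder than the theorem itself.
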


Theorem \ref{t:main} exhausts the ways in which one might obtain
embeddings  $\out\to\outm$ by lifting the action of $\out$ to
covering spaces with an abelian Galois group, but one might hope
to construct many other embeddings using non-abelian covers.
Indeed the construction developed
by
Aramayona, Leininger and Souto in the context of surface automorphisms
\cite{ALS} proceeds along exactly these lines and, as they
remark, it can be adapted to the setting of $\out$. However, in the
embeddings $\out\to\outm$ obtained by their method, $m$ is bounded below
by a doubly exponential function of $n$, whereas in our construction
we can take $m=2^n(n-1)+1$ if $n$ is even. If $n$ is odd, then the
smallest value we obtain is $m=p^n(n-1)+1$ where $p$ is the smallest 
prime that does not divide $(n-1)$; in Section \ref{ss:noThy} we describe how
quickly $p$ grows as a function of $n$.

In the second part of this paper we set about the task of providing
lower bounds on the value of $m$ such that there is a monomorphism
$\out\to\outm$, or even a map with infinite image.

\begin{thmA}\label{t:nomaps}  Suppose $n> 8$.
If $n$ is even and $n<m\le 2n$, or $n$ is odd and $n<m\le 2n-2$,
then every  homomorphism
${\rm{Out}}(F_{n})\to{\rm{Out}}(F_{m})$ factors through
${\rm{det}}\colon\out\to\Z/2$.
\end{thmA}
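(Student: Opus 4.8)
The plan is to show that every homomorphism $\phi\colon\out\to\outm$ in the stated range is trivial on the index--two subgroup ${\rm{SOut}}(F_n):=\ker(\det)$, i.e.\ on the preimage of ${\rm{SL}}(n,\Z)$ under $\out\to\GL$; since a homomorphism out of $\out$ factors through $\det$ precisely when it is trivial on ${\rm{SOut}}(F_n)$, this \emph{is} the theorem. Two features of ${\rm{SOut}}(F_n)$ make this tractable. It is perfect for $n\ge3$; and for $n\ge4$ it is generated by a single large finite subgroup $\Gamma$ --- the image in $\out$ of the group $W_n^{+}\cong(\Z/2)^{n-1}\rtimes S_n$ of even signed permutations of a basis --- together with one Nielsen transvection $\lambda$, where the full list of relations between $\lambda$ and the elements of $\Gamma$ is short and explicit (the ``Steinberg package'' read off from Nielsen's presentation and used in \cite{BV1,BV2,BV3}). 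Perfectness disposes of one case at once: if $\phi(\Gamma)=1$ then $\phi({\rm{SOut}}(F_n))=\langle\phi(\lambda)\rangle$ is cyclic, hence trivial. So everything reduces to ruling out $\phi(\Gamma)\neq1$.

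On the target side I would assemble the standard structural facts about $\outm$: (i) every finite subgroup $G\le\outm$ fixes a vertex of the spine of reduced Outer space, hence acts faithfully by simplicial automorphisms on a connected graph $Y$ of genus $m$ in which every vertex has valence $\ge3$, so $Y$ has at most $2(m-1)$ vertices and $3(m-1)$ edges; (ii) a simplicial automorphism of a connected graph that fixes every edge setwise is either the identity (it fixes, hence pointwise fixes, a vertex) or reverses every edge of a bipartition of $Y$, so the kernel of the $G$--action on $E(Y)$ has order $\le2$, and $G$ has a central subgroup of order $\le2$ with quotient embedded in $S_{3(m-1)}$; (iii) $\outm$ has virtual cohomological dimension $2m-3$; and (iv) for $n\ge3$ every proper quotient of ${\rm{PSL}}_n(\Z)$ is finite (the congruence subgroup property). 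When $m\le2n$ these say: any $E(Y)$ occurring above has at most $6n-3$ edges, and $\outm$ has no torsion--free finite--index subquotient of cohomological dimension exceeding $2m-3\le4n-3$.

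The hard part --- and the step I expect to be the main obstacle --- is a rigidity statement for $\phi|_\Gamma$: \emph{if $\phi(\Gamma)\neq1$, then after collapsing and blowing up forests in the graph carrying $\phi(\Gamma)$ the induced action is one of a very short list of standard actions}, in which a quotient of $W_n^+$ permutes --- and partially reverses --- the petals of a rose embedded in $Y$; correspondingly $\ker(\phi|_\Gamma)$ is one of the few normal subgroups of $W_n^+$ one can list by hand. Establishing this means running a case analysis of all faithful actions of $W_n^+$ and of its large constituents $A_n,\,S_n,\,(\Z/2)^{n-1}$ on graphs of genus $m\le2n$: their minimal faithful permutation degrees grow linearly in $n$ whereas $|E(Y)|\le6n-3$, the only proper subgroups of $A_n$ of index $<n$ are trivial, and an elementary abelian $2$--subgroup of $S_{|E(Y)|}$ has rank $\le|E(Y)|/2$; imposing on top of this the internal relations of $W_n^+$ pins the orbit pattern, hence the action, down to the standard list. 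This combinatorial analysis of highly symmetric graphs of bounded genus is where the hypothesis $n>8$ is used (to kill the sporadic small--genus coincidences), and it is also what lowers the bound on $m$ from $2n$ to $2n-2$ when $n$ is odd --- parity entering through which orbit patterns, in particular which ones pulled back along a connected double cover (of genus $2n-1$ over a graph of genus $n$), can occur.

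Granting the rigidity step, the argument concludes. Assume $\phi(\Gamma)\neq1$, so that $\phi|_\Gamma$ has standard image. Imposing the short relation list between $\lambda$ and $\Gamma$ forces $\phi(\lambda)$ to respect the same rose, leaving two possibilities. If $\phi(\lambda)$ has finite order, then $\phi({\rm{SOut}}(F_n))$ is a finite quotient of ${\rm{SOut}}(F_n)$; being perfect and containing the standard $\phi(\Gamma)$ it has a nonabelian composition factor of the shape $A_n$ or, through the quotient $\out\to{\rm{SL}}(n,\Z)$ and (iv), ${\rm{PSL}}_n(\mathbb{F}_p)$ for some prime $p$ --- but such a factor already forces $|E(Y)|\ge n$, respectively $\ge2^{n}-1$, well beyond $6n-3$ when $n>8$, so no such finite group acts on a graph of genus $\le2n$; and the only infinite quotient in play, ${\rm{PSL}}_n(\Z)$, has cohomological dimension $\binom{n}{2}>4n-3\ge2m-3$ for $n>8$, hence does not embed in $\outm$ by (iii). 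If instead $\phi(\lambda)$ has infinite order, then $\phi|_{{\rm{SOut}}(F_n)}$ is injective and, up to conjugacy in $\outm$, is one of the embeddings built in the first half of this paper by lifting the $\out$--action on a genus--$n$ graph to an abelian cover, so by Theorem~\ref{t:main} and Corollary~\ref{c:embed} it has $m=r^{n}(n-1)+1\ge2^{n}(n-1)+1>2n$. Both alternatives contradict $m\le2n$, so $\phi(\Gamma)=1$; hence $\phi$ is trivial on ${\rm{SOut}}(F_n)$ and factors through $\det$.
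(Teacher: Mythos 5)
Your proposal has two genuine gaps, and the second one is fatal.

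First, the dichotomy at the end does not hold. If $\phi(\lambda)$ has finite order it does \emph{not} follow that $\phi({\rm{SOut}}(F_n))$ is finite: a group generated by two finite-order elements (or by a finite subgroup and one torsion element) can easily be infinite, so you cannot pass to "a finite quotient of ${\rm{SOut}}(F_n)$" and run the composition-factor count. Conversely, if $\phi(\lambda)$ has infinite order it does \emph{not} follow that $\phi|_{{\rm{SOut}}(F_n)}$ is injective; ${\rm{SOut}}(F_n)$ has proper normal subgroups of infinite index (e.g.\ the Torelli subgroup), so the kernel could be nontrivial while the image is still infinite. The trichotomy you need is not ``$\phi(\Gamma)=1$ / finite image / injective''.

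Second, and decisively: the claim that an injective $\phi|_{{\rm{SOut}}(F_n)}$ must be, up to conjugacy, one of the embeddings built in the first half of the paper is unsupported and essentially circular. Theorem~\ref{t:main} classifies which \emph{abelian covers} admit lifts of the $\out$-action; it says nothing about arbitrary embeddings into $\outm$, and ruling out embeddings for $m\le 2n$ is exactly what Theorem~\ref{t:nomaps} is asserting. You cannot invoke the constructive half of the paper as a classification of all embeddings. The paper's actual mechanism is different and avoids infinite-order elements entirely: it uses \emph{two} finite subgroups, $W_n\cong(\Z/2)^n\rtimes S_n$ and $G_n\cong S_{n+1}\times\Z/2$, shows via Proposition~\ref{p:image} (which rests on Proposition~C of \cite{BV3}, \cite{BridsonFarb}, and the Landazuri--Seitz bound) that a map with image bigger than $\Z/2$ must be injective on both, classifies all minimal admissible graphs of genus $m\le 2n$ realizing each (Propositions~\ref{p:An}, \ref{p:Wn}, \ref{p:Gn}), and then derives a contradiction by computing the dimension of the $(-1)$-eigenspace on $H_1(F_m)$ of elements common to $W_n$ and $G_n$ (such as $\Delta$, $(12)(34)$, and $\tau\Delta$) in the two realizations and showing the answers cannot agree; the odd case additionally needs Potapchik--Rapinchuk. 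Your ``rigidity step'' is in the spirit of the paper's graph classifications, but without the second finite subgroup and the eigenspace comparison there is no contradiction to extract from it, and without a substitute for the final step the argument does not close.
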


Note how this result contrasts with our earlier
observation that $\out$ has a subgroup of finite index that
embeds in $\outm$ when $m=2n-1$. The key point here is that
subgroups of finite index can avoid certain of the finite subgroups
in $\out$ (indeed they may be torsion-free), whereas our proof of
Theorem \ref{t:nomaps} relies on a detailed understanding of how the
finite subgroups of $\out$ can map to $\outm$ under putative
maps $\out\to\outm$. Two subgroups play a particularly important
role, namely $W_n\cong (\Z/2)^n\rtimes S_n$, the group of symmetries of
the $n$-rose $R_n$, and $G_n\cong\S_{n+1}\times\Z/2$, the group of
symmetries of the {\em{$(n+1)$-cage}}, i.e. the graph with $2$ vertices
and $(n+1)$-edges. Indeed the key idea in the proof of Theorem C is to show that
no homomorphism can restrict to an injection on both of these subgroups.
In order to establish this, we have to analyze in detail all of the
ways in which these finite groups can act by automorphisms on graphs of genus
at most $2n$. In the light of the realization theorem for finite
subgroups of $\outm$, this analysis amounts to a complete description
of the conjugacy classes of the finite subgroups in $\outm$ that are
isomorphic to $A_n,\ W_n$ and $G_n$ (cf.~Propositions \ref{p:An}, \ref{p:Wn} and \ref{p:Gn}). We believe that these results are of independent interest.

Beyond $m=2n$, the analysis of ${\rm{Hom}}(W_n,\outm)$ and
${\rm{Hom}}(G_n,\outm)$ becomes more complex, but several crucial
facts extend well beyond this range (e.g.~Lemma \ref{sizen} and
Proposition \ref{p:image}). Moreover, Dawid Kielak \cite{kielak} has recently
extended our methods to improve the bound $m\le 2n$.
Thus, at the time of writing, we have no good reason to suppose
that the lower bound that Theorem \ref{t:nomaps} imposes on the least
$m>n$ with $\out\hookrightarrow\outm$ is any closer to the truth that
the exponential upper bound provided by Theorem \ref{t:main}.

We thank  Roger Heath-Brown,  Dawid Kielak and Martin Liebeck for their helpful comments.

\section{Theorem~\ref{t:main}: Restatement and Discussion}

In the appendix to this paper we explain in detail the equivalence of various
short exact sequences arising in group theory and topology. In the case of graphs, the basic equivalence
can be expressed as follows.

Let $N$ be a characteristic subgroup of a free group $F,$ let $X$ be a connected graph with fundamental group $F$,
let $p:\widehat X\to X$ be the covering space corresponding to $N$, let $\HE(X)$ be the group of free homotopy classes of homotopy equivalences of $X$,
and let
$\FHE(\widehat X)$ be the group of
 fiber-preserving homotopy classes of fiber-preserving homotopy equivalences of $\widehat X$. Note that the deck transformations of $\widehat X$
lie in the kernel of the natural map $\FHE(\widehat X)\to \HE(X).$

\begin{proposition}\label{p:sameProb}The following diagram of groups is commutative and the vertical maps are isomorphisms:
$$
\begin{matrix}

 1&\to &F_n/N&\to &\aut/N&\to &\out\to 1\\
&&\downarrow&&\downarrow &&\downarrow\\
 1&\to &\deck&\to &\FHE(\widehat X)&\to &\HE(X)\to 1\\

\end{matrix}
$$
\end{proposition}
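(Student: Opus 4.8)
The plan is to construct the vertical isomorphisms explicitly and then check commutativity of the two squares; exactness of the bottom row will then follow from exactness of the top row together with the fact that the vertical maps are isomorphisms. First I would fix a basepoint $\ast\in X$ and a lift $\widehat\ast\in\widehat X$, so that $\pi_1(X,\ast)$ is identified with $F=F_n$ and $\pi_1(\widehat X,\widehat\ast)$ with $N\lhd F$. Recall that for a graph (a $K(F,1)$), the natural map $\HE(X)\to\mathrm{Out}(F)$ sending a homotopy equivalence to the induced outer automorphism is an isomorphism; this is the classical fact that for aspherical spaces free homotopy classes of self-homotopy-equivalences correspond to outer automorphisms of $\pi_1$. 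This gives the right-hand vertical isomorphism $\mathrm{Out}(F)\xrightarrow{\sim}\HE(X)$, or rather its inverse.

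Next I would build the middle isomorphism $\aut/N\xrightarrow{\sim}\FHE(\widehat X)$. Given $\phi\in\aut$, since $N$ is characteristic $\phi$ descends to $\bar\phi\in\mathrm{Out}(F)$, which is realized by a homotopy equivalence $f\colon X\to X$; because $\phi(N)=N$, a standard lifting criterion shows $f$ lifts to a fiber-preserving homotopy equivalence $\widehat f\colon\widehat X\to\widehat X$, and the set of such lifts is a torsor under the deck group $\deck\cong F/N$. Choosing the (pointed) lift determined by $\phi$ itself — i.e.\ the lift whose action on $\pi_1(\widehat X,\widehat\ast)=N$ is $\phi|_N$ — pins down a well-defined element of $\FHE(\widehat X)$, and one checks this assignment is a homomorphism with kernel exactly $N$ (viewed as $\mathrm{Inn}(F)\cap(\text{automorphisms fixing }N\text{ pointwise})$ — in fact the inner automorphisms $\iota_w$ with $w\in N$), inducing the claimed isomorphism on the quotient $\aut/N$. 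The left-hand map $F/N\to\deck$ is the usual identification of the Galois group of the covering, sending the coset of $w$ to the deck transformation determined by $w$ acting on the fiber over $\ast$; this is a standard isomorphism of covering space theory.

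With the three vertical maps in hand, commutativity of both squares is a diagram chase: the right square commutes because passing from $\phi$ to the outer class of $\phi$ and then realizing by a homotopy equivalence agrees, by construction, with first lifting and then projecting $\widehat f\mapsto f$; the left square commutes because the chosen lift of the homotopy equivalence realizing an inner automorphism $\iota_w$ ($w\in F$) is precisely the deck transformation attached to $w$. Finally, exactness of the bottom row: injectivity of $\deck\to\FHE(\widehat X)$ and surjectivity of $\FHE(\widehat X)\to\HE(X)$ (every homotopy equivalence of $X$ that preserves the characteristic subgroup $N$ — and every one does, since $N$ is characteristic — lifts) are immediate, and exactness in the middle follows by the short five lemma applied to the vertical isomorphisms and the exactness of the top row, which is the tautological sequence $1\to\mathrm{Inn}(F)/N\to\aut/N\to\out\to1$ once one notes $F/N\cong\mathrm{Inn}(F)/(\mathrm{Inn}(F)\cap N)=\mathrm{Inn}(F)/N$ as $N\le F=\mathrm{Inn}(F)$.

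The main obstacle, and the one point I would treat carefully rather than wave at, is the precise bookkeeping of basepoints in defining the middle isomorphism: a homotopy equivalence of $X$ has many fiber-preserving lifts to $\widehat X$ (a $\deck$-torsor), and a free homotopy class of such lifts must be specified canonically in terms of $\phi\in\aut$ (not merely $[\phi]\in\out$) for the map $\aut/N\to\FHE(\widehat X)$ to be well defined and to make the left square commute. This is exactly the subtlety the authors flag as "difficult to pin down in the literature," and it is where I would lean on the detailed treatment promised in the appendix; the rest is formal.
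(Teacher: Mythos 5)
Your overall strategy coincides with the one the paper carries out in its appendix (Theorem~\ref{diagram} and Corollary~\ref{c:coroll}): identify $\HE(X)$ with $\out$ and the based group $\HE_\bullet(X)$ with $\aut$, send $\phi$ to the basepoint-fixing lift of a based realization of $\phi$, and transport exactness across the resulting commutative diagram. The genuine gap is your claim that injectivity of $\deck\to\FHE(\widehat X)$ is ``immediate.'' It is not: for the universal cover of a graph every deck transformation is freely homotopic to the identity (the tree is contractible), so the fiber-preserving condition must actually be exploited, and doing so requires a hypothesis. The paper's argument is that the composite $\deck=F_n/N\to\FHE(\widehat X)\to\HE(\widehat X)\to{\rm Out}(N)$ is induced by conjugation, and $F_n/N\to{\rm Out}(N)$ is injective precisely because the centralizer $Z_{F_n}(N)$ is trivial (true here since $N$ is a nonabelian subgroup of a free group). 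Alternatively, a fiber-preserving homotopy from a deck transformation $g$ to ${\rm id}$ descends to a self-homotopy of ${\rm id}_X$ whose basepoint track represents a central, hence trivial, element of $F_n$, forcing $g(\widehat x_0)=\widehat x_0$ and so $g={\rm id}$. Either way a trivial-center/centralizer hypothesis enters, and you never invoke it.

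The same omission affects the middle vertical map: you assert that ``one checks this assignment is a homomorphism with kernel exactly $N$,'' but the containment of the kernel in the inner automorphisms by elements of $N$ is exactly the conjunction of the two injectivity statements above (that $F_n\to\HE_\bullet(X)$, sending $w$ to ``drag the basepoint around $w$,'' is injective --- again via triviality of the center, as in Lemma~\ref{l:exactone} --- and that $\deck\to\FHE(\widehat X)$ is injective), so it cannot be waved through; and your closing appeal to ``the detailed treatment promised in the appendix'' is circular, since the appendix \emph{is} the proof of this proposition. A smaller inaccuracy: your left square commutes not because the chosen lift of a realization of $\iota_w$ \emph{is} the deck transformation $\dd(w)$ (it is not --- it is the time-one map of a lifted basepoint-dragging homotopy), but because that homotopy lifts to a fiber-preserving homotopy between the two (the paper's Lemma~\ref{l:commSq}). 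None of this is fatal to the strategy, which is the correct one, but as written the proof omits precisely the steps where the hypotheses do the work.
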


The characteristic subgroups $N<F_n$ with $F_n/N$ abelian are the commutator subgroup $F_n'=[F_n,F_n]$ and $F_n'F_n^r$, the subgroup
generated by $F_n'$ and all $r$th powers in $F_n$.
By combining this observation with the preceding proposition, we see that Theorem \ref{t:main} is equivalent to the following statement.

\begin{theorem}\label{t:split} Let $F_n$ be a free group of rank $n$ and let $N<F_n$ be a characteristic subgroup with $F_n/N$ abelian.
Then the short exact sequence
$$
1\to F_n/N\to \Autn/N \to \Out \to 1
$$
splits if and only $N= [F_n,F_n] F_n^r$ with $r$ coprime to $n-1$.
\end{theorem}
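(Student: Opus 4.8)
The plan is to analyze the short exact sequence
$1\to F_n/N\to \Autn/N\to\Out\to 1$
using the restriction-quotient structure of the abelian quotients, treating the ``sufficiency'' and ``necessity'' directions by quite different methods. Throughout, write $V=F_n/[F_n,F_n]\cong\Z^n$ for the abelianization, on which $\Out$ acts via the natural surjection $\Out\to\gln$. When $N=[F_n,F_n]$ the quotient is $\Autn/[F_n,F_n]$, and when $N=[F_n,F_n]F_n^r$ it is the further quotient by $rV$, so the relevant module is $V$ or $V/rV=(\Z/r)^n$. In all cases the sequence is a pullback of the ``universal'' extension
$1\to V\to \Autn/[F_n,F_n]\to\Out\to 1$
along reduction mod $r$, so the obstruction to splitting lives in $H^2(\Out;V)$ (resp.\ its image in $H^2(\Out;(\Z/r)^n)$), and the whole problem is to decide when that class dies.

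\textbf{Sufficiency.} First I would fix $r>1$ coprime to $n-1$ and construct an explicit splitting $\Out\to\Autn/[F_n,F_n]F_n^r$. The natural guess is to start from the action of $\Out$ on $V=\Z^n$ via $\Out\to\gln$, reduce mod $r$ to get a homomorphism $\Out\to\gl(n,\Z/r)$, and then lift this across the natural map $\Aut(F_n/[F_n,F_n]F_n^r)\supset \mathrm{Aff}\to\gl(n,\Z/r)$ whose kernel is exactly the translation group $(\Z/r)^n$. The lifting data amounts to choosing, for each generator (or better, equivariantly), a translation part; the coprimality of $r$ and $n-1$ is precisely what makes the relevant cocycle a coboundary. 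Concretely, one can use the ``sum of coordinates'' vector $v_0=(1,1,\dots,1)$: the obstruction cocycle measures the failure of elementary Nielsen generators to fix a suitable basepoint, and because each such generator changes the relevant quantity by a multiple of $(n-1)$ which is invertible mod $r$, one can solve for a correcting translation. (Since this direction merely extends Bogopolski--Puga to even $r$, I expect the algebra here to be routine once the right coordinates are chosen; alternatively one can invoke Proposition~\ref{p:sameProb} and build the splitting geometrically, lifting the $\Out$-action on the rose $R_n$ to the $(\Z/r)^n$-cover.)

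\textbf{Necessity.} This is the heart of the matter and where I expect the main obstacle. There are two things to rule out: (i) $N=[F_n,F_n]$ itself (the integral case), and (ii) $N=[F_n,F_n]F_n^r$ with $r$ sharing a prime factor $p\mid(n-1)$. For (ii) it suffices, by reducing mod $p$, to treat $r=p$ a prime dividing $n-1$; and (i) will follow from a parallel argument over $\Z$ (or can be deduced since a splitting mod every prime would be needed). So the crux is: if $p\mid (n-1)$ then $1\to(\Z/p)^n\to\Autn/[F_n,F_n]F_n^p\to\Out\to 1$ does not split. The strategy is to restrict to a well-chosen finite subgroup $H<\Out$ on which the non-splitting is already visible --- the obvious candidate is the symmetry group $W_n=(\Z/2)^n\rtimes S_n$ of the rose, or a cyclic/dihedral piece of it --- and compute the relevant class in $H^2(H;(\Z/p)^n)$ directly. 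One expects that for a suitable finite $H$ the $H$-module $(\Z/p)^n$ decomposes (when $p\mid n-1$) so that the fixed ``all-ones'' line and its complement interact in a way that forces a nonzero class; the condition $p\mid n-1$ enters because $\langle v_0\rangle$ becomes a submodule whose complement is not split off, or because the trace form on $(\Z/p)^n$ degenerates. An alternative, cleaner route is geometric via Proposition~\ref{p:sameProb}: a splitting would give an action of $\Out$ on the abelian cover $\widehat X\to R_n$ by fiber-preserving homotopy equivalences; one then derives a contradiction by looking at how a finite symmetry group of $R_n$ must permute the $p^n$ vertices (or the lifts of a basepoint) of $\widehat X$, and an Euler-characteristic or counting argument --- genus of $\widehat X$ is $p^n(n-1)+1$ --- shows the required equivariant lift of the basepoint cannot be chosen consistently precisely when $p\mid n-1$.

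The main obstacle, then, is the necessity direction, and specifically pinning down the exact finite subgroup and module computation (or the exact equivariant counting argument on the cover) that detects $p\mid(n-1)$; the sufficiency direction and the reduction of the general $r$ to the prime case should be comparatively mechanical. I would organize the write-up as: (1) reduce to the universal extension and to prime $r$; (2) prove sufficiency by an explicit affine splitting (or geometric lift of the rose action); (3) prove necessity by restricting to a finite subgroup of $\Out$ and computing the obstruction class, using $p\mid (n-1)$ at the one key step; (4) assemble these to conclude the ``if and only if''.
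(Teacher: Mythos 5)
Your sufficiency argument is essentially the paper's: the paper lifts Gersten's generators of $\out$ to affine maps $\mathbf v\mapsto A\mathbf v+\mathbf b$ on the $(\Z/r)^n$-cover of the rose, with translation parts chosen via $s(n-1)+tr=1$ so that the inner-automorphism relation holds up to a deck transformation in $r\Z^n$; your ``correcting translation'' heuristic is the same mechanism, and filling in the details is routine matrix checking.

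The necessity direction, however, has a genuine gap: the detecting subgroup you propose cannot work. The group $W_n=(\Z/2)^n\rtimes S_n$ (and every subgroup of it) is the image of the subgroup of signed permutation automorphisms of $F_n$, which maps isomorphically onto $W_n\subset\out$; hence the extension $1\to F_n/N\to\aut/N\to\out\to 1$ splits over $W_n$ for \emph{every} characteristic $N$, and the restriction of the obstruction class to $H^2(W_n;(\Z/p)^n)$ vanishes for all $p$. The same is true of any finite subgroup realized by a graph action with a fixed vertex, so no amount of module-theoretic analysis of $(\Z/p)^n$ over such a subgroup will detect $p\mid(n-1)$. What the paper uses instead is the cyclic subgroup $\Theta_n<\out$ of order $n-1$ generated by the rotation $\cyc$ of the graph consisting of an $(n-1)$-cycle with a loop at each vertex: this action is \emph{free}, and the essential feature is that $\cyc^{n-1}$ is the nontrivial inner automorphism given by conjugation by the long loop $a_0$, so $\cyc$ has infinite order in $\aut$. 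Writing down a presentation of the preimage of $\Theta_n$ in $\aut/F_n'F_n^r$, one finds that every lift $vx$ of $\cyc$ satisfies $(vx)^{n-1}=\a_0^{m_0(n-1)+1}\mu^{m_1+\cdots+m_{n-1}}$, and $m_0(n-1)+1\equiv 0\pmod r$ has no solution when $\gcd(r,n-1)>1$; equivalently (the paper's geometric version) the preimage of $\Theta_n$ in $\aut/F_n'$ is torsion-free because $\Theta_n$ acts freely on a graph that embeds equivariantly in a torus on which the extended action is still free. So the arithmetic of $n-1$ enters not through the $W_n$-module structure of $(\Z/p)^n$ but through the existence of an order-$(n-1)$ element of $\out$ that is an $(n-1)$-st root of a nontrivial inner automorphism; without identifying such an element, your plan for necessity does not get off the ground.
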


The existence of splittings is proved  in Section~\ref{s:split} below, and  the non-existence in Section~\ref{s:no-split}.

Any splitting of the
sequence in Theorem \ref{t:split} gives a monomorphism $\Out\hookrightarrow\Autn/N$, which we can compose with the
restriction map $$\Autn/N\to {\rm{Aut}}(N)/N = {\rm{Out}}(N).$$ To complete the proof of Corollary~\ref{c:embed} we need to know that this
last map is injective.  This follows from the  observation below.

\begin{lemma}\label{l:inj} If $F$ is a finitely generated free group and $N<F$ is a characteristic subgroup of finite index, then
the restriction map ${\rm{Aut}}(F)\to{\rm{Aut}}(N)$ is injective.
\end{lemma}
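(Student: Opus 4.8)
The plan is to exploit the fact that a finitely generated free group $F$ has trivial center, so it embeds in its own automorphism group via the inner automorphisms. Suppose $\phi\in\autf$ restricts to the identity on $N$; I want to show $\phi=\mathrm{id}$. First I would fix a nontrivial element $g\in N$ (possible since $N$ has finite index, hence is infinite). For every $f\in F$, the conjugate $fgf^{-1}$ lies in $N$ because $N$ is normal, so $\phi(fgf^{-1})=fgf^{-1}$. On the other hand $\phi(fgf^{-1})=\phi(f)\,\phi(g)\,\phi(f)^{-1}=\phi(f)\,g\,\phi(f)^{-1}$, using $g\in N$. Hence $\phi(f)\,g\,\phi(f)^{-1}=f\,g\,f^{-1}$, which says that $f^{-1}\phi(f)$ commutes with $g$.

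The second step is to pin down the centralizer. In a free group, the centralizer of any nontrivial element $g$ is the maximal cyclic subgroup $\langle h\rangle$ containing it; it is infinite cyclic. So for each $f\in F$ we get $f^{-1}\phi(f)\in\langle h\rangle$, where a priori $h$ depends only on $g$ and not on $f$. Now vary $g$: if $N$ is nonabelian (which it is, since a finite-index subgroup of a free group of rank $\ge 2$ has rank $\ge 2$; and in the rank-one case $F\cong\Z$ the statement is trivial because $\aut(\Z)=\Z/2$ and the only proper characteristic finite-index subgroup argument is immediate), pick $g_1,g_2\in N$ that do not commute with each other. Then $f^{-1}\phi(f)$ lies in the centralizer of $g_1$ and in the centralizer of $g_2$, i.e. in $\langle h_1\rangle\cap\langle h_2\rangle$. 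Two distinct maximal cyclic subgroups of a free group intersect trivially, and $\langle h_1\rangle\ne\langle h_2\rangle$ since $g_1,g_2$ do not commute. Therefore $f^{-1}\phi(f)=1$ for all $f$, i.e. $\phi=\mathrm{id}$, and injectivity follows.

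The only real subtlety — and the step I would be most careful about — is handling the degenerate ranks and making sure the two elements $g_1,g_2\in N$ can be chosen noncommuting. For $F$ of rank $0$ or $1$ the lemma is trivial or vacuous, so assume $\mathrm{rk}(F)\ge 2$; then by the Nielsen–Schreier formula $\mathrm{rk}(N)=1+[F:N](\mathrm{rk}(F)-1)\ge 2$, so $N$ is a nonabelian free group and contains two elements generating a rank-two subgroup, hence noncommuting. Everything else is standard free-group commutation theory (centralizers are cyclic; distinct maximal cyclic subgroups meet trivially), so no hard estimate or construction is needed; the argument is short once these facts are invoked.
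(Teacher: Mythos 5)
Your proof is correct, but it takes a genuinely different route from the paper's. The paper exploits the finite index directly: if $k=[F:N]$ then $w^k\in N$ for every $w\in F$, so an automorphism $\phi$ that is the identity on $N$ satisfies $(\phi(w))^k=\phi(w^k)=w^k$, and uniqueness of roots in a free group forces $\phi(w)=w$. You instead use only that $N$ is normal and nonabelian: from $\phi|_N=\mathrm{id}$ you deduce that $f^{-1}\phi(f)$ centralizes all of $N$, and since the centralizer in $F$ of a nonabelian subgroup is trivial (two noncommuting elements have distinct maximal cyclic centralizers, which meet trivially), you conclude $\phi(f)=f$. Your argument is the standard fact that if $N\trianglelefteq G$ has trivial centralizer $Z_G(N)$, then restriction to $N$ is injective on the automorphisms preserving $N$ --- precisely the hypothesis the authors impose in Theorem~\ref{diagram} of the appendix, so your route fits naturally with the rest of the paper. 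What each approach buys: the paper's proof is shorter and uses nothing beyond unique roots, but genuinely needs finite index; yours works for any nonabelian characteristic subgroup, including infinite-index ones such as $N=[F,F]$. Your handling of the degenerate ranks is slightly garbled in wording but harmless: for $\mathrm{rk}(F)\le 1$ the lemma is immediate, and for $\mathrm{rk}(F)\ge 2$ the Nielsen--Schreier count does guarantee $N$ is nonabelian as you say.
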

\begin{proof}    If $k$ is the index of $N$ in $F$ and $w$ is an arbitrary element of $F$, then $w^k\in N$.  
If $\phi$ is in the kernel of the restriction map ${\rm{Aut}}(F)\to{\rm{Aut}}(N)$, then $w^k=\phi(w^k)=(\phi(w))^k$.  But elements in $F$ have unique roots, so $w=\phi(w)$ and $\phi$ is the identity.  
\end{proof}

\subsection{Expected value of $m$}\label{ss:noThy}

The subgroup $N=F_n'F_n^r$ has index $r^n$ in $F_n$ so is free of rank $m=r^n(n-1)+1$.  Thus the smallest $m$ for which we obtain an embedding $\out\to\outm$ from Theorem~\ref{t:split} is $m=p^n(n-1)+1$, where $p$ is the smallest prime which does not divide $n-1$.   If $n$ is even we can take $p=2$ but for $n$ odd the size of $p$ as a function of $n$ is not obvious.
However, it turns out that the expected value of $p$ is a constant (which is approximately equal to $3$).  We are indebted to Roger Heath-Brown for the following argument.

For any natural number $k>1$, let $f(k)$ denote the smallest prime number which does not divide $k$ and let $Q(k)$ be the product of all prime numbers strictly less than $k$ (with $Q(2)=1$). An easy consequence of the Prime Number Theorem is that $\log(Q(k))$ is asymptotically equal to $k$. This implies in particular that the infinite series used to define $C$ in the following proposition is convergent.

\begin{proposition}  The expected value $$E(f)=\lim_{x\to\infty} \frac{1}{x}\sum_{k=1}^x f(k),$$
 exists and is equal to the constant $$C:=\sum_p \frac{p-1}{Q(p)},$$ where the sum is over all primes $p$.
\end{proposition}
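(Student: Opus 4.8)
The plan is to evaluate the average of $f$ by reorganising the sum into ``layers''. The elementary fact that drives everything is: for a prime $p$, one has $f(k)\ge p$ if and only if every prime $q<p$ divides $k$, which (by coprimality of distinct primes) happens if and only if $Q(p)\mid k$. List the primes as $p_1=2<p_2=3<\cdots$ and set $p_0:=0$. Since $f(k)=p_J$ forces $\mathbf 1[f(k)\ge p_j]=1$ for exactly those $j\le J$, we get the telescoping identity $f(k)=\sum_{j\ge 1}(p_j-p_{j-1})\,\mathbf 1[f(k)\ge p_j]$. Summing over $1\le k\le x$ (a finite sum for each fixed $x$, since $Q(p_j)\to\infty$) and interchanging the two sums yields
$$\frac1x\sum_{k=1}^{x}f(k)=\sum_{j\ge 1}(p_j-p_{j-1})\,\frac{N_j(x)}{x},\qquad N_j(x):=\#\{k\le x:\,Q(p_j)\mid k\}=\Big\lfloor\frac{x}{Q(p_j)}\Big\rfloor .$$
Since $N_j(x)/x=1/Q(p_j)+O(1/x)$ with an absolute implied constant, the answer should be $S:=\sum_{j\ge 1}(p_j-p_{j-1})/Q(p_j)$.

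The main obstacle — and the only place the Prime Number Theorem is really used — is to justify passing the limit $x\to\infty$ through the infinite $j$-sum, since the weights $p_j-p_{j-1}$ are unbounded while the per-term error is only $O(1)$. I would argue directly: comparing term by term with $S$ and using $N_j(x)=\lfloor x/Q(p_j)\rfloor$ gives
$$\Big|\frac1x\sum_{k=1}^{x}f(k)-S\Big|\;\le\;\frac1x\sum_{j:\,Q(p_j)\le x}(p_j-p_{j-1})\;+\;\sum_{j:\,Q(p_j)>x}\frac{p_j-p_{j-1}}{Q(p_j)}.$$
The first sum telescopes to $p_{J(x)}/x$, where $J(x)=\max\{j:Q(p_j)\le x\}$; since $\log Q(p_j)\sim p_j$, the inequality $Q(p_j)\le x$ forces $p_j=O(\log x)$, so this term is $O(\log x/x)\to 0$. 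The second sum is a tail of $S$, which converges precisely because $Q(p_j)$ grows exponentially (the same fact, noted in the statement above, that makes $C$ converge), so it too tends to $0$. Hence the limit exists and equals $S$.

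It remains to verify the purely formal identity $S=C$. Splitting $S=\sum_j p_j/Q(p_j)-\sum_j p_{j-1}/Q(p_j)$ and using $Q(p_j)=p_{j-1}Q(p_{j-1})$ for $j\ge 2$ together with $p_0=0$, the second sum collapses to $\sum_{i\ge 1}1/Q(p_i)$, whence $S=\sum_{j\ge 1}(p_j-1)/Q(p_j)=C$; every rearrangement here is legitimate because both auxiliary series converge absolutely, again by the exponential growth of $Q$. As a sanity check, this agrees with the heuristic that $\{k:f(k)=p\}$ has density $\tfrac1{Q(p)}\bigl(1-\tfrac1p\bigr)$, so that formally $E(f)=\sum_p p\cdot\frac{p-1}{pQ(p)}=C$ — the real content of the argument being the legitimacy of that interchange.
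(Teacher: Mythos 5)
Your argument is correct, and it organizes the computation differently from the paper. The paper partitions $\{k\le x\}$ by the exact value of $f$, using the two-floor count $\#\{k\le x: f(k)=p\}=\lfloor x/Q(p)\rfloor-\lfloor x/pQ(p)\rfloor$, so the constant $C=\sum_p\frac{p-1}{Q(p)}$ appears immediately; the error is then bounded by noting that contributing primes satisfy $p=O(\log x)$ (from $\log Q(p)\sim p$), giving a total error $\frac1x O(\log^2 x)$. You instead use the layer-cake identity $f(k)=\sum_j(p_j-p_{j-1})\mathbf 1[f(k)\ge p_j]$ together with the single-floor count $\#\{k\le x: f(k)\ge p_j\}=\lfloor x/Q(p_j)\rfloor$, which makes the error term telescope to the slightly sharper $O(\log x/x)$ plus a tail of the convergent series, at the cost of landing on the constant $S=\sum_j\frac{p_j-p_{j-1}}{Q(p_j)}$ and needing the extra Abel-summation step (via $Q(p_j)=p_{j-1}Q(p_{j-1})$) to identify $S$ with $C$. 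The essential analytic input is identical in both proofs --- the exponential growth of $Q(p)$, i.e.\ the Prime Number Theorem consequence $\log Q(p)\sim p$, which both localizes the relevant primes to $p=O(\log x)$ and guarantees convergence of the defining series --- so the two arguments are two sides of the same summation-by-parts coin; yours is somewhat more explicit about the interchange of limits, the paper's is more direct in producing $C$.
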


\begin{proof}
Note that $f(k)=p$ if and only if $Q(p)$ divides $k$ and $p$ does not divide $k$.  The first statement implies,
taking logs, that $\log(Q(p))\leq \log k$, so $p$ can be of order at most
$\log k$. 

By definition,
$$\sum_{k\le x}f(k)=\sum_p p\cdot \#\{k\le x: f(k)=p\}$$
and
\begin{align*}\#\{k\le x: f(k)=p\} & = \#\{k\le x: Q(p)|k\} - \#\{k\le x: pQ(p)|k\}\\
&= \lfloor x/Q(p)\rfloor - \lfloor x/pQ(p)\rfloor\\
&= x\,\frac{p-1}{pQ(p)} +O(1).
\end{align*}
As we just observed, the primes that contribute to the above sum have order at most $\log (x)$, so
\begin{align*}
\frac{1}{x}\sum_{k\le x}f(k) &=  \sum_{p=O(\log x)} \frac{p-1}{Q(p)}
                    + \frac{1}{x}\, O(\sum_{p= O(\log x)} p)\\
                                                 &=  \sum_{p= O(\log x)} \frac{p-1}{Q(p)} + \frac{1}{ x}O(\log^2 x).
 \end{align*}
Letting $x\to\infty$, we get $E(f)=C$.
\end{proof}

Given $n$, the smallest value of $m$ for which Corollary~\ref{c:embed} yields an embedding is $m=f(n-1)^n(n-1)+1$, and the preceding proposition tells us that
``on average" this is no greater than an exponential function of $n$. In the worst case, $m$ can be larger but still only on the order of $e^{n\log\log n}$. Indeed
the worst case arises when $(n-1)=Q(k)$ for some $k$, in which case $k\le f(n-1)<2k$, and since $\log(Q(k))\sim k$  we see that  $f(n-1)$ grows like $\log n$.

\subsection{Embedding a subgroup of finite index}\label{ss:finiteindex}

Corollary~\ref{c:embed} gives conditions under which the entire group $\out$ embeds in $\outm$.
 If we relax this to require only that a subgroup of finite index of $\out$ should embed in $\outm$, we can obtain many more embeddings as follows.

\begin{proposition}\label{p:finiteInd} For all positive integers $n$ and
$d$, there exists a subgroup of finite index $\G\subset \out$ and a
monomorphism
$\G\hookrightarrow \outm$, where $m=d(n-1)+1$.
\end{proposition}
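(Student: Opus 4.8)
The plan is to obtain the finite-index subgroup of $\out$ by restricting automorphisms of $F_n$ to a subgroup of index $d$, and to use residual finiteness to compensate for the fact that restriction does not descend to outer automorphism groups.

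First I would fix any subgroup $H<F_n$ of index $d$; there are only finitely many such, and by Nielsen--Schreier $H$ is free of rank $m=d(n-1)+1$. Let $A:=\{\phi\in\aut:\phi(H)=H\}$ be the stabiliser of $H$ under the action of $\aut$ on the (finite) set of index-$d$ subgroups, so that $A$ has finite index in $\aut$. Restriction gives a homomorphism $\rho\colon A\to{\rm Aut}(H)={\rm Aut}(F_m)$, which I would show is injective by the argument proving Lemma~\ref{l:inj}: if $\phi|_H={\rm id}$ then every $w\in F_n$ has a power $w^\ell$ with $1\le\ell\le d$ lying in $H$, whence $\phi(w)^\ell=\phi(w^\ell)=w^\ell$, and uniqueness of roots gives $\phi(w)=w$. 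Setting ${\rm Inn}_H:=\{{\rm inn}_h:h\in H\}\triangleleft A$, one has $\rho({\rm Inn}_H)={\rm Inn}(F_m)$ and, again by injectivity of $\rho$ (after composing with a suitable inner automorphism), $\rho^{-1}({\rm Inn}(F_m))={\rm Inn}_H$; hence $\rho$ induces an embedding $E:=A/{\rm Inn}_H\hookrightarrow\outm$.

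Next I would relate $E$ to $\out$. The composite $A\hookrightarrow\aut\to\out$ has image a finite-index subgroup $\bar\Gamma\le\out$ and kernel $A\cap{\rm Inn}(F_n)={\rm Inn}_{N}$, where $N:=N_{F_n}(H)$ (indeed ${\rm inn}_g$ preserves $H$ exactly when $g\in N$); since $H\le N$, this kernel contains ${\rm Inn}_H$, so there is a surjection $\pi\colon E\twoheadrightarrow\bar\Gamma$ whose kernel $K:={\rm Inn}_N/{\rm Inn}_H$ is finite of order at most $[N:H]\le d$. What remains is to lift a finite-index subgroup of $\bar\Gamma$ through $\pi$, and this is the delicate point. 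Here I would use that $E$, being a subgroup of the residually finite group $\outm$, is itself residually finite; since $K$ is finite there is a finite-index subgroup $\Delta\le E$ with $\Delta\cap K=1$. Then $\pi$ restricts to an isomorphism of $\Delta$ onto a subgroup $\Gamma:=\pi(\Delta)$ of finite index in $\bar\Gamma$, hence of finite index in $\out$, and the composite $\Gamma\cong\Delta\hookrightarrow E\hookrightarrow\outm$ is the required monomorphism.

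The one genuinely delicate step is the last, and it is forced because restriction does not carry ${\rm Inn}(F_n)$ into ${\rm Inn}(H)$: for $g\in N\setminus H$ the automorphism ${\rm inn}_g|_H$ of $H$ is not inner, so the naive rule $[\phi]\mapsto[\phi|_H]$ fails to be well defined on $\bar\Gamma$. If $H$ can be chosen self-normalising --- which for $n\ge2$ is possible whenever $d\ne2$, for instance by taking $H$ to be the preimage of $\S_{d-1}$ under an epimorphism $F_n\twoheadrightarrow\S_d$ --- then $K$ is trivial and no further passage to finite index is needed; the residual finiteness argument is precisely what makes the statement uniform in $d$, covering in particular $d=2$, where $H$ must be normal and $K\cong\Z/2$.
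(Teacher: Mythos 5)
Your argument is correct, but it takes a genuinely different route from the paper's. Both proofs rest on the same two elementary inputs --- the stabiliser $A$ of an index-$d$ subgroup $H$ has finite index in $\aut$, and restriction $A\to{\rm{Aut}}(H)$ is injective by the unique-roots argument of Lemma~\ref{l:inj} --- and both must then dispose of the same finite obstruction: for $g$ in the normaliser of $H$ but not in $H$, the automorphism ${\rm{inn}}_g|_H$ is not inner, so $E=A/{\rm{Inn}}_H$ surjects onto the image of $A$ in $\out$ only up to the finite kernel $N_{F_n}(H)/H$. You remove this obstruction \emph{a posteriori}: having embedded $E$ in $\outm$, you invoke residual finiteness of $\outm$ (Grossman's theorem) to find a finite-index subgroup of $E$ meeting the kernel trivially. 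The paper removes it \emph{a priori}: it takes $H$ to be the kernel of $F_n\to\Z/d$, factoring through an epimorphism onto a centerless wreath product $W=G\wr\Z/d$ with kernel $N\subset H$, and replaces $A$ by the finite-index subgroup $\G_1$ acting trivially on $F_n/N=W$; since $Z(W)=1$, any inner automorphism lying in $\G_1$ is conjugation by an element of $N\subseteq H$, so $\G_1\cap{\rm{Inn}}(F_n)=\G_1\cap{\rm{Inn}}_H$ and no correction is needed. Your version is shorter and works for an arbitrary index-$d$ subgroup, at the price of importing a nontrivial external theorem; the paper's is self-contained, produces a more explicit (congruence-type) subgroup $\G$, and its centerless-quotient trick is exactly what yields the remark following the proposition. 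Your closing observation that a self-normalising $H$ makes the kernel vanish is a nice elementary alternative for $d\neq 2$, but note the paper goes the opposite way: its $H$ is normal in $F_n$, and the discrepancy is killed by shrinking the subgroup of $\aut$ rather than by choosing $H$ well.
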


\begin{proof} For $n=1$ the proposition is trivial, and for $n=2$
it follows immediately from the fact that ${\rm{Out}}(F_2)$ has a free
subgroup of finite index.
So we assume that $n\ge 3$ and fix an epimorphism from $F_n$ to a wreath
product $W=G\wr \Z/d$, where $G$ is any finite 2-generator centerless
group ($S_3$ for example). Let $N$
be the kernel of this epimorphism and let $H\supset N$ be the kernel
of the composition $F_n\to W\to \Z/d$.

The set of subgroups in $F_n$ that have the same index as $N$ is finite,
as is the set that have the same index as $H$. The action of
$\aut$ on each of these sets defines a homomorphism to a finite
symmetric group; define $\Gamma_0$ to be the intersection of the
two kernels. Note that $\Gamma_0$ leaves invariant both $H$ and $N$. Let
$\G_1\subset\G_0$ be the kernel of the natural map
$\G_0\to{\rm{Aut}}(F_n/N)$ and note that since the center of $W=F_n/N$
is trivial,
the intersection of $\G_1$ with ${\rm{Inn}}(F_n)=F_n$ is contained in $N$,
and hence in $H$.

Euler characteristic tells us that the rank of the free group $H$ is $d(n-1)+1$. The restriction map $\G_1\to {\rm{Aut}}(H)$,
which is injective as in Lemma \ref{l:inj}, induces an injection
$\G_1/(\G_1\cap H)\hookrightarrow {\rm{Out}}(H)$. To complete the proof,
it suffices to note that
$\G:=\G_1/(\G_{1}\cap H)$ is the image of $\G_1$
in $\out$, since $(\G_1\cap H)=(\G_1\cap F_n)$.
\end{proof}

\begin{remark} The preceding argument shows that if $N$ is the kernel of a map from $F_n$ onto a finite centerless group, then a subgroup of finite index in $\out$  injects into ${\rm Out}(N)$.
\end{remark}

\section{Proof of Theorem~\ref{t:main}: The existence of lifts}\label{s:split}

In order to prove the existence of lifts as asserted in Theorem \ref{t:main} (equivalently
the existence of splittings in Theorem \ref{t:split}), we work with the sequence
$$1\to \deck\to \FHE(\widehat X)\to \HE(X)\to 1$$
where $X=R$ is a 1-vertex graph with $n$ loops (a {\em rose}) and $\widehat X\to X$ is the covering space $L_r\to R$
corresponding to $N<\pi_1X=F_n$, where $N=F_n'F_n^r$ with $r$ coprime to $n-1$.
We work with an explicit presentation of $\out = \HE(R)$.
We take explicit homotopy equivalences of $R$ that generate $\HE(R)$, lift each
to a homotopy equivalence of the universal abelian covering $L$ of $R$, project down to $L_r$,
and prove that the resulting elements of $\FHE(L_r)$ satisfy the defining relations of
our presentation. The case $n=2$ is special:  for $n=2$  one can split $\HE(R)\to \FHE(L)$.

The generators and relations we will use for $\Out$ are based on those given by Gersten   in  \cite{Ge} for ${\rm{SAut}}(F_n)$. We fix a generating set $A=\{a_1,\ldots,a_n\}$ for $F_n$.  Gersten gives an elegant and succinct presentation using generators $\phi_{ab}$ with $a,b\in A\cup A^{-1}, b\neq a, a^{-1}$; here $\phi_{ab}$  corresponds to the automorphism which sends $a\mapsto ab$ and fixes all elements of $A\cup A^{-1}$ other than $a$ and $a^{-1}$.  In Gersten's paper automorphisms act on $F_n$ on the right and the symbol $[\alpha,\beta]$ means $\alpha\beta\alpha^{-1}\beta^{-1}$.  In the current paper we want automorphisms to act on the left  to be consistent with composition of functions in $\HE(R)$, but we would like to use the same commutator convention. Thus for us a Gersten relation of the form $[\alpha,\beta]=\gamma$ becomes $[\beta^{-1},\alpha^{-1}]=\gamma$ or, equivalently, $[\alpha^{-1},\beta^{-1}]=\gamma^{-1}$. His relations, then, are the following:

\begin{itemize}

\item $\phi_{ab^{-1}}=\phi_{ab}^{-1}$

\item  $[\phi^{-1}_{ab},\phi^{-1}_{cd}]=1$ if $a\neq c, d,d^{-1}$ and $b\neq c,c^{-1}$

\item   $[\phi^{-1}_{ab},\phi^{-1}_{bc}]=\phi^{-1}_{ac}$ for  $a\neq c,c^{-1}$

\item $\phi_{ba}\phi_{ab^{-1}}\phi_{b^{-1}a^{-1}}=\phi_{b^{-1}a^{-1}}\phi_{a^{-1}b}\phi_{b a }$

\item  $(\phi_{b^{-1}a^{-1}}\phi_{a^{-1}b}\phi_{ba})^4=1$.
\end{itemize}

We will need to distinguish between right transvections   $\rho_{ij}\colon a_i\mapsto a_ia_j$ and left transvections $\lambda_{ij}\colon  a_i\mapsto a_ja_i$, for $i\neq j$, so we rewrite Gersten's relations using the translation
$\phi_{a_ia_j}=\rho_{ij},$  $\phi_{a_i^{-1}a_j^{-1}}=\lambda_{ij}$, $\phi_{a_ia_j^{-1}}=\rho_{ij}^{-1}$  and  $\phi_{a_i^{-1}a_j}=\lambda_{ij}^{-1}$.   
 
In terms of the $\rho_{ij}$ and $\lambda_{ij}$, Gersten's first relation is unnecessary and the rest of the presentation for ${\rm S}\aut $ becomes
\begin{enumerate}
\item $[\rho_{ij},\rho_{kl}]=[\rho_{ij},\lambda_{kl}]=[\lambda_{ij},\lambda_{kl}]=1$ if $i\neq k,l$ and $j\neq k$

\item $[\rho_{ij},\lambda_{ik}]=1$ for all $i,j,k$

\item $[\rho_{ij}^{-1},\rho_{jk}^{-1}]=[\rho_{ij},\lambda_{jk}]=[\rho_{ij}^{-1},\rho_{jk}]^{-1}=[\rho_{ij},\lambda_{jk}^{-1}]^{-1}=\rho_{ik}^{-1}$

\item $[\lambda_{ij}^{-1},\lambda_{jk}^{-1}]=[\lambda_{ij},\rho_{jk}]=[\lambda_{ij}^{-1},\lambda_{jk}]^{-1}=[\lambda_{ij},\rho_{jk}^{-1}]^{-1}=\lambda_{ik}^{-1}$

\item $\lambda_{ij}\lambda_{ji}^{-1}\rho_{ij}=\rho_{ij}\rho_{ji}^{-1}\lambda_{ij}$

\item $(\rho_{ij}\rho_{ji}^{-1}\lambda_{ij})^4=1$.

\end{enumerate}

To get a presentation for $\aut$ we must add a generator $\tau$, corresponding to the automorphism $a_1\mapsto a_1^{-1}$,  and relations

\begin{enumerate}
\item[(7)]  $\tau^2=1$

\item[(8)]  $\tau\rho_{1j}\tau=\lambda_{1j}^{-1}, \tau\lambda_{1j}\tau=\rho_{1j}^{-1}$

\item[(9)] $\tau\rho_{i1}\tau=\rho_{i1}^{-1}, \tau\lambda_{i1}\tau=\lambda_{i1}^{-1}$

\item[(10)] $[\tau,\rho_{ij}]=[\tau,\lambda_{ij}]=1$ for $i,j\neq 1.$

\end{enumerate}
 
Finally, to get a presentation for $\Out$ we kill the inner automorphisms by adding the relation

(11)  $\displaystyle{ \prod_{i=2}^n\rho_{i1}\lambda_{i1}^{-1}=1}.$

We orient the petals of $R$ and label them with the generators $a_i$. If we fix a base vertex $\Oh$ of $L$, we may  think of $L$ as the 1-skeleton of the standard hypercubulation of $\R^n$ with vertices in $\Z^n$. The lift starting at $\Oh$ of the
edge labeled $a_i$ is identified with the standard $i$-th basis vector $\mathbf e_i$.

Any  automorphism $\phi$ of $F_n$ is realized on $R$ by a homotopy equivalence sending the  petal labeled $a_i$ to the (oriented) path which traces out the reduced word $\phi(a_i)$.  This has a {\it standard lift\,} $\widehat\phi$ to a $\Z^n$-equivariant homotopy equivalence of $L$, which sends  $\mathbf e_i$ to the lift starting at $\Oh$ of the path labeled by the reduced word $\phi(a_i)$.   (Since the homotopy equivalence is $\Z^n$-equivariant, it suffices to describe its effect on the edges $\mathbf e_i$.)  This in turn induces a lift $\widehat\phi_r$ to the quotient $L_r=L/\Z^r$ for each $r$, which is trivial in $\FHE(L_r)$ if and only if $\widehat \phi$ is fiberwise-homotopic to a deck transformation by an element of $r\Z^n$.

Lifting automorphisms to $L$  and $L_r$ by these standard lifts does not give a well-defined homomorphism on $\out $.  This is because the standard lift of the inner automorphism $\alpha_1=\prod_{i>1}\rho_{i1}\lambda^{-1}_{i1}$  sends $\mathbf e_i$  to a $\sqcup$-shaped path labeled  $a_1^{-1}a_ia_1$.  The extension to all of $L$ is freely homotopic to the deck transformation  ${\mathbf x}\mapsto {\mathbf x}-\mathbf e_1$ of $L$.  Since this deck transformation is  not freely homotopic to the identity (even mod $r$ for any $r>1$),  the assignment $\alpha_1\mapsto \widehat\alpha_1$ does not give well-defined map from $\out =\HE(R)$  to $\HE(L_r)$ (much less to $\FHE(L_r)$).

We rectify this situation by choosing lifts which are shifted from the standard lifts by appropriate translations of $L$.    Since $n-1$ is coprime to $r$, there are integers $s$ and $t$ with $s(n-1)+tr=1$.
We use the standard lift $P_{ij}=\widehat \rho_{ij}$ for $\rho_{ij}$, but for $\lambda_{ij}$ we choose the lift $\Lambda_{ij}$ which shifts the standard lift by $-s\mathbf e_j$, and for $\tau$ we choose the lift $T$ which shifts the standard lift by $s\mathbf e_1$.  Thus  on the vertices  $\mathbf v=(x_1,\ldots,x_n)$ of $L$, $P_{ij}$ acts as a shear parallel to the $\mathbf e_j$ direction, $\Lambda_{ij}$ is a shear composed with a shift, and $T$ is reflection across the hyperplane $x_1=s/2$.
In particular, each of our lifts  induces an affine map $\mathbf v\mapsto A\mathbf v + \mathbf b$, with $A\in {\rm{GL}}(n,\Z)$ and $\mathbf b\in\Z^n$.  Each edge   beginning at a vertex $\mathbf v$ in the direction $\mathbf e_i$ is sent to the path that begins at $A\mathbf v + \mathbf b$ and is labeled  $\phi(a_i)$.

We represent an affine map  $\mathbf v\mapsto A\mathbf v + \mathbf b$ by  the $(n+1)\times (n+1)$ matrix
$ \begin{pmatrix}A&\mathbf b\\\mathbf 0&1\end{pmatrix}$, acting on the vector $\begin{pmatrix} \mathbf v\\ 1\end{pmatrix}$.
Let $E_{pq}$ denote the $n\times n$ elementary matrix with one non-zero entry equal to $1$ in the $(p,q)$ position.  Thus the
action of $P_{ij}$ on the $0$-skeleton of $L$ is represented by the matrix with $A=I_n+E_{ji}$ and $\mathbf b=\mathbf 0$; for $\Lambda_{ij}$
we have the matrix with $A=I_n+E_{ji}$ and $\mathbf b = -s\mathbf e_j$; and
for $T$ the matrix with $A=I_n-2E_{11}$ and $\mathbf b=s\mathbf e_1$.

For example, for $n=2$ we have $s=1$ and $$P_{12} \sim\left(\begin{array}{c|c}
\begin{matrix}1 & 0  \\   1 & 1  \end{matrix} & \begin{matrix} 0 \\ 0\end{matrix}\\
\hline
\begin{matrix}0 & 0 \end{matrix} & 1
\end{array}\right)
\quad \Lambda_{12} \sim\left(\begin{array}{c|c}
\begin{matrix}1 & 0  \\   1 & 1  \end{matrix} & \begin{matrix} 0 \\ -1\end{matrix}\\
\hline
\begin{matrix}0 & 0 \end{matrix} & 1
\end{array}\right)
\quad\hbox{and}\quad T \sim \left(\begin{array}{c|c}
\begin{matrix}-1 & 0  \\   0 & 1  \end{matrix} & \begin{matrix} 1 \\ 0\end{matrix}\\
\hline
\begin{matrix}0 & 0 \end{matrix} & 1
\end{array}\right).
$$

\begin{remark}\label{r:edgesOK}
An important point to note is that since the relations (1) to (10) hold
in $\aut$ and not just $\out$, in order to verify that the above
assignments respect these relations we need only  verify that the
appropriate product of matrices is the identity: such a verification
tells us that the corresponding product of our chosen lifts acts trivially
on the vertices of $L$, and the action on edges (which is defined in
terms of the action on labels) is automatically satisfied. This remark
does not apply to relation (11), which requires special attention.
\end{remark}

\begin{proposition} For every integer $r$ coprime to $(n-1)$,
the  lifts    $P_{ij}$ of $\rho_{ij}$, $\Lambda_{ij}$ of $\lambda_{ij}$ and $T$ of $\tau$
define a splitting of the natural map $\FHE(L_r)\to \HE(R)=\out$.
\end{proposition}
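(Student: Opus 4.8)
The plan is to verify that the assignment $\rho_{ij}\mapsto P_{ij}$, $\lambda_{ij}\mapsto\Lambda_{ij}$, $\tau\mapsto T$ respects each of the defining relations (1)--(11) of $\out$ listed above. Since $P_{ij}$, $\Lambda_{ij}$, $T$ are by construction lifts of $\rho_{ij}$, $\lambda_{ij}$, $\tau$, such a verification produces a homomorphism $\out\to\FHE(L_r)$ whose composite with $\FHE(L_r)\to\HE(R)=\out$ is the identity on generators, and hence a splitting of the sequence; so this is all that need be done.

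For relations (1)--(10), which hold already in $\aut$, I would invoke Remark \ref{r:edgesOK}: it suffices to check that the corresponding word in the $(n+1)\times(n+1)$ affine matrices is trivial. The upper-left $n\times n$ block takes care of itself, being the image under abelianization $\aut\to\GL(n,\Z)$ of a relation that holds in $\aut$; so one only has to see that the translation vectors cancel. Each generator contributes a translation part in $s\Z^n$ (namely $\mathbf 0$ for $P_{ij}$, $-s\mathbf e_j$ for $\Lambda_{ij}$, $s\mathbf e_1$ for $T$), so the translation part of each relator word is $s$ times a fixed integer vector, and I would check directly, relation by relation and using the index restrictions built into each relation, that this vector is $\mathbf 0$. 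For instance, for relation (5) both sides lift to the affine map $\mathbf v\mapsto(I_n+E_{ji}-E_{ij}-E_{ii}-E_{jj})\mathbf v+s\mathbf e_i$; for relation (8) the contributions along $T\rho_{1j}T\lambda_{1j}$ are $s\mathbf e_1$, then $-s\mathbf e_j$, then $s\mathbf e_1$, which collapse to $\mathbf 0$; and for relation (6), with $g=\rho_{ij}\rho_{ji}^{-1}\lambda_{ij}$, the chosen lift of $g$ is $\mathbf v\mapsto M\mathbf v+s\mathbf e_i$ where $M$ has order $4$, acting as a quarter-turn on the $(i,j)$-plane and trivially elsewhere, so the lift of $g^4$ has translation $s(I+M+M^2+M^3)\mathbf e_i=\mathbf 0$. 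The other relations are handled the same way; this is the purely mechanical part of the argument.

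The substantive step, and the only place where $\gcd(r,n-1)=1$ enters, is relation (11), $\prod_{i=2}^n\rho_{i1}\lambda_{i1}^{-1}=1$. A direct computation of affine matrices shows that each factor $P_{i1}\Lambda_{i1}^{-1}$ acts on the vertices of $L$ as the translation $\mathbf v\mapsto\mathbf v+s\mathbf e_1$, so the product $\Psi:=\prod_{i=2}^nP_{i1}\Lambda_{i1}^{-1}$ acts as $\mathbf v\mapsto\mathbf v+(n-1)s\mathbf e_1$. The underlying automorphism of $\Psi$ is $\prod_{i=2}^n\rho_{i1}\lambda_{i1}^{-1}=\alpha_1$, the inner automorphism $x\mapsto a_1^{-1}xa_1$; comparing labels edge by edge, as licensed by Remark \ref{r:edgesOK}, I would identify $\Psi$ with the composite of the standard lift $\widehat\alpha_1$ with the translation by $(n-1)s\,\mathbf e_1$. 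By the discussion preceding the proposition, $\widehat\alpha_1$ is fiberwise homotopic in $L$ to the deck transformation $\mathbf x\mapsto\mathbf x-\mathbf e_1$: the homotopy realizing $\alpha_1\simeq\mathrm{id}$ on $R$ by sliding the basepoint lifts to a fiber-preserving homotopy of $L$ ending at that deck transformation. Hence $\Psi$ is fiberwise homotopic on $L$ to the translation by $((n-1)s-1)\mathbf e_1$, and so induces on $L_r=L/r\Z^n$ the deck transformation by $((n-1)s-1)\mathbf e_1\bmod r\Z^n$. Finally $s(n-1)+tr=1$ gives $(n-1)s-1=-tr\in r\Z^n$, so this deck transformation of $L_r$ is trivial in $\FHE(L_r)$ --- which is precisely relation (11).

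I expect the main obstacle to be relation (11): one has to pin down correctly the class in $\FHE(L_r)$ of a product of lifts whose underlying automorphism is inner yet whose action on the vertices of $L$ is a nontrivial translation. This uses the topological facts packaged in Proposition \ref{p:sameProb} and the appendix --- that a fiber-preserving homotopy equivalence is determined up to fiberwise homotopy by its underlying map together with its value at one vertex, and that $\widehat\alpha_1$ represents the deck transformation by $-\mathbf e_1$ --- together with the identity $(n-1)s\equiv1\pmod r$, which is available exactly because $r$ is coprime to $n-1$. Relations (1)--(10), although tedious, present no conceptual difficulty once Remark \ref{r:edgesOK} is available.
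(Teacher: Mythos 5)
Your proposal is correct and follows essentially the same route as the paper: relations (1)--(10) are verified by the affine-matrix computations licensed by Remark \ref{r:edgesOK}, and relation (11) is handled by showing the product $\prod_{i>1}P_{i1}\Lambda_{i1}^{-1}$ is fiberwise homotopic to the deck transformation by $((n-1)s-1)\mathbf e_1=-tr\,\mathbf e_1$, which dies in $L_r$. Your observation that for (1)--(10) the linear parts are automatic (being abelianizations of relations already valid in $\autt$), so that only the translation vectors need checking, is a mild streamlining of the paper's direct matrix checks but not a different argument.
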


\begin{proof}   We first claim that the maps
$\Lambda_{ij},P_{ij}$ and $T$ (and hence the maps
they induce on $L_r$) satisfy relations (1)
to (10).
\iffalse
More precisely, we show that the appropriate composition of maps is
equal to an element of the group of deck transformations of $L\t L_r$,which
is simply the group of translations $r\Z^n$ of $L$.  
\fi
In
each case, the verification is a straightforward calculation, which we  illustrate with several examples  using $j=2$ and $k=3$.
(In the light of remark \ref{r:edgesOK}, each verification simply
requires a matrix calculation.)

An example of a relation of type (4) is  $[\lambda_{12}^{-1},\lambda_{23}^{-1}]=\lambda_{13}^{-1}$.  

$$\left(\begin{array}{c|c}
\begin{array}{ccc}1 & 0 & 0 \\ -1 & 1 & 0 \\ 0 &0 & 1\end{array} & \begin{array}{c}0 \\ s \\ 0\end{array}\\
\hline
\begin{array}{ccc}0 & 0 & 0\end{array} & 1
\end{array}\right)
\left(\begin{array}{c|c}
\begin{array}{ccc}1 & 0 & 0 \\ 0 & 1 & 0 \\ 0 & -1 & 1\end{array} & \begin{array}{c}0 \\ 0 \\ s\end{array}\\
\hline
\begin{array}{ccc}0 & 0 & 0\end{array} & 1
\end{array}\right)
\left(\begin{array}{c|c}
\begin{array}{ccc}1 & 0 & 0 \\ 1 & 1 & 0 \\ 0 & 0 & 1\end{array} & \begin{array}{c}0 \\ -s \\ 0\end{array}\\
\hline
\begin{array}{ccc}0 & 0 & 0\end{array} & 1
\end{array}\right)
\left(\begin{array}{c|c}
\begin{array}{ccc}1 & 0 & 0 \\0 & 1 & 0 \\ 0 & 1 & 1\end{array} & \begin{array}{c}0 \\ 0 \\-s\end{array}\\
\hline
\begin{array}{ccc}0 & 0 & 0\end{array} & 1
\end{array}\right)
$$
$$
=
\left(\begin{array}{c|c}
\begin{matrix}1 & 0 & 0 \\ 0 & 1 & 0 \\ -1 & 0 & 1\end{matrix} & \begin{array}{c}0 \\ 0 \\ s\end{array}\\
\hline
\begin{matrix}0 & 0 & 0\end{matrix} & 1
\end{array}\right)
$$

\iffalse
Once we have verified that the map on the vertices is correct, we are guaranteed that the map is indeed equal to $\Lambda_{13}^{-1}$, because $[\Lambda^{-1}_{12},\Lambda^{-1}_{23}]$ lifts $[\lambda^{-1}_{12},\lambda^{-1}_{23}]$, whch is equal to $\lambda^{-1}_{13}$.
\fi

To verify relation (6), we  first compute the action of $P_{12}P_{21}^{-1}\Lambda_{12}$ (we only need 2 indices),
$$
\left(\begin{array}{c|c}
\begin{matrix}1 & 0  \\   1 & 1  \end{matrix} & \begin{matrix} 0 \\ 0\end{matrix}\\
\hline
\begin{matrix}0 & 0 \end{matrix} & 1
\end{array}\right)
\left(\begin{array}{c|c}
\begin{matrix}1 & -1  \\   0 & 1  \end{matrix} & \begin{matrix} 0 \\ 0\end{matrix}\\
\hline
\begin{matrix}0 & 0 \end{matrix} & 1
\end{array}\right)
\left(\begin{array}{c|c}
\begin{matrix}1 & 0  \\   1 & 1  \end{matrix} & \begin{matrix} 0\\ -s\end{matrix}\\
\hline
\begin{matrix}0 & 0 \end{matrix} & 1
\end{array}\right)
=
\left(\begin{array}{c|c}
\begin{matrix}0 & -1  \\   1 & 0  \end{matrix} & \begin{matrix} s \\ 0\end{matrix}\\
\hline
\begin{matrix}0 & 0 \end{matrix} & 1
\end{array}\right),
$$
then check
$$
\left(\begin{array}{c|c}
\begin{matrix}0 & -1  \\   1 & 0  \end{matrix} & \begin{matrix} s \\ 0\end{matrix}\\
\hline
\begin{matrix}0 & 0 \end{matrix} & 1
\end{array}\right)
^4=
\left(\begin{array}{c|c}
\begin{matrix}1 & 0  \\   0 & 1  \end{matrix} & \begin{matrix} 0 \\ 0\end{matrix}\\
\hline
\begin{matrix}0 & 0 \end{matrix} & 1
\end{array}\right)
. $$

As an example of relation (8) we verify $TP_{12}T=\Lambda_{12}^{-1}$:
$$
\left(\begin{array}{c|c}
\begin{matrix}-1 & 0  \\   0 & 1  \end{matrix} & \begin{matrix} s \\ 0\end{matrix}\\
\hline
\begin{matrix}0 & 0 \end{matrix} & 1
\end{array}\right)
\left(\begin{array}{c|c}
\begin{matrix}1 & 0  \\   1 & 1  \end{matrix} & \begin{matrix} 0 \\ 0\end{matrix}\\
\hline
\begin{matrix}0 & 0 \end{matrix} & 1
\end{array}\right)
\left(\begin{array}{c|c}
\begin{matrix}-1 & 0  \\   0 & 1  \end{matrix} & \begin{matrix} s\\ 0\end{matrix}\\
\hline
\begin{matrix}0 & 0 \end{matrix} & 1
\end{array}\right)
=
\left(\begin{array}{c|c}
\begin{matrix}1 & 0  \\   -1& 1  \end{matrix} & \begin{matrix} 0 \\ s\end{matrix}\\
\hline
\begin{matrix}0 & 0 \end{matrix} & 1
\end{array}\right).
$$

Relation (11) is the only relation which requires some thought. For example, the matrix corresponding to the product $\prod_{i> 1}P_{i1}\Lambda_{i1}^{-1}$, which lifts conjugation by $a_1$, is
$$
\left(\begin{array}{c|c}
\begin{matrix}1 &0  \\   0 & I_{n-1}  \end{matrix} & \begin{matrix} (n-1)s\\  0\end{matrix}\\
\hline
\begin{matrix}0  &  0 \end{matrix} & 1
\end{array}\right).
$$
Thus for all $i>1$, the map on $L$ sends the edge starting at $\mathbf v$ in the direction $\mathbf e_i$   to the $\sqcup$-shaped path labeled $a_1^{-1}a_ia_1$ starting at $\mathbf v + s(n-1)\mathbf e_1=\mathbf v + \mathbf e_1 - tr\mathbf e_1$.  Dragging all vertices of $L$ one unit along the edge parallel to $\bf e_1$  gives a fiber-preserving homotopy of this map to the deck transformation  $\mathbf v\to \mathbf v -tr \mathbf e_1$.  This deck transformation induces the identity on $L_r$.  
\end{proof}

\begin{remark}
For $r=n=2$ the above construction
gives an embedding ${\rm{Out}}( F_2)\hookrightarrow {\rm{Out}}( F_5)$.  Here is an explicit description of the images of the $\rho_{ij},\lambda_{ij}$ and $\tau_i$ under this embedding, where $F_5=\langle a,b,c,d,e\rangle$.
$$\rho_{12}=\begin{cases} a\mapsto db \\
                           b\mapsto b\\
                           c\mapsto c\\
                           d\mapsto ea\\
                           e\mapsto e
                           \end{cases}
\quad
\rho_{21}=\begin{cases} a\mapsto a \\
                           b\mapsto cea\\
                           c\mapsto c\\
                           d\mapsto d\\
                           e\mapsto db
                           \end{cases}
\quad
\tau=\begin{cases} a\mapsto a^{-1} \\
                           b\mapsto e\\
                           c\mapsto c^{-1}\\
                           d\mapsto d^{-1}\\
                           e\mapsto b
                           \end{cases}
$$
$\lambda_{12}(x)=b\rho_{12}(x)b^{-1}$ and $\lambda_{21}(x)=a\rho_{12}(x)a^{-1}$.
\end{remark}

\section{Proof of Theorem~\ref{t:main}:  The non-existence of lifts}\label{s:no-split}

We begin by proving that for $n>2$ the map $\aut/N \to \out$ does not split when $N=F_n'$; this  is equivalent to the case $A\cong\Z^n$ in Theorem~\ref{t:main}.
To do this, we consider the
cyclic group $\Cyc < \out$ of order $(n-1)$ that corresponds to the group of rotations of the marked graph shown in Figure~\ref{Xn}.

\begin{proposition}\label{p:tf}   The inverse image of
$\Cyc$ in $\aut/F_n'$ is torsion-free, and therefore $\aut/F_n' \to \out$ does not split.
\end{proposition}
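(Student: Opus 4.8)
The plan is to prove the torsion-freeness assertion; the non-splitting statement then follows formally, since a splitting $\out\to\aut/F_n'$ would be injective and would carry $\Cyc\cong\Z/(n-1)$ (a non-trivial group, as $n>2$) isomorphically onto a finite subgroup of the inverse image of $\Cyc$.

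Write $G$ for the inverse image of $\Cyc$ in $\aut/F_n'$. From $1\to F_n/F_n'\to\aut/F_n'\to\out\to1$, with $F_n/F_n'\cong\Z^n$ and $\out$ acting through the standard representation $\out\to\GL$, we get an extension $1\to\Z^n\to G\to\Z/(n-1)\to1$ in which a fixed generator $\theta$ of $\Cyc$ acts on $\Z^n$ by its image $M\in\GL$. I would pin this extension down by computing the $(n-1)$-st power of a lift $\tilde\theta\in G$ of $\theta$. The rotations act \emph{freely} on the marked graph $X_n$ of Figure~\ref{Xn}, with quotient a two-petalled rose $R=\langle e,\ell\rangle$, so $X_n\to R$ is a regular $\Z/(n-1)$-covering; I read its monodromy as $e\mapsto1$, $\ell\mapsto0$. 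Then $F_n=\pi_1(X_n)$ is the kernel of $\pi_1(R)=\langle e,\ell\rangle\to\Z/(n-1)$, with Reidemeister--Schreier basis $x_0=e^{\,n-1}$ and $x_i=e^{\,i-1}\ell e^{-(i-1)}$ $(1\le i\le n-1)$; the deck generator $\theta$ acts on $F_n$, as an element of $\mathrm{Aut}(F_n)$, by conjugation by $e$ (well-defined modulo $\mathrm{Inn}(F_n)$, i.e.\ exactly the ambiguity of a lift to $\aut/F_n'$). Hence $\theta^{\,n-1}$ is conjugation by $e^{\,n-1}=x_0$, a basis element of $F_n$, and so, under the isomorphism $\aut/F_n'\cong\FHE(\widehat X_n)$ of Proposition~\ref{p:sameProb}, the corresponding lift satisfies $\tilde\theta^{\,n-1}=[x_0]\in F_n/F_n'=\Z^n$. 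A look at the same basis shows $M$ fixes $[x_0]$ and cyclically permutes $[x_1],\dots,[x_{n-1}]$: indeed $\theta(x_0)=x_0$, $\theta(x_i)=x_{i+1}$ for $i<n-1$, and $\theta(x_{n-1})=x_0x_1x_0^{-1}\equiv x_1\pmod{F_n'}$.

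Granting this, the conclusion is quick. Let $\pi\colon\Z^n\to\Z$ be the coordinate projection onto $[x_0]$. Because $M$ fixes $[x_0]$ and permutes the other basis vectors, $\ker\pi\cong\Z^{n-1}$ is $M$-invariant, hence normal in $G$, and $Q:=G/\ker\pi$ is an extension $1\to\Z\to Q\to\Z/(n-1)\to1$ (central, since $M$ acts trivially on $\Z^n/\ker\pi$, so $Q$ is abelian). The image of $\tilde\theta$ in $Q$ has $(n-1)$-st power the image of $[x_0]$, which is the generator $\pi([x_0])=1$ of the subgroup $\Z<Q$; hence that image generates $Q$, and $Q\cong\Z$. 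Any torsion element of $G$ therefore maps trivially to $Q$, so lies in $\ker\pi\cong\Z^{n-1}$, which is torsion-free; thus $G$ is torsion-free.

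The step demanding care is the identification of $\tilde\theta^{\,n-1}$: one must verify from Figure~\ref{Xn} that the quotient of $X_n$ by the rotation group is the stated two-petalled rose with the stated monodromy, and then follow the basepoint/inner-automorphism bookkeeping built into Proposition~\ref{p:sameProb} carefully enough to be sure that $\tilde\theta^{\,n-1}$ is the class of $x_0$ \emph{in $\aut/F_n'$} rather than merely that $\theta^{\,n-1}$ is inner \emph{in $\out$}. One should also check that the conclusion does not depend on the lift: replacing $\tilde\theta$ by $\tilde\theta\,\mathbf w$ with $\mathbf w\in\Z^n$ changes $\tilde\theta^{\,n-1}$ by $(I+M+\dots+M^{n-2})\mathbf w$, whose $\pi$-image is $(n-1)\pi(\mathbf w)$, so $\pi(\tilde\theta^{\,n-1})$ is well-defined modulo $n-1$ and the argument above applies to every lift. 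Everything else is routine Reidemeister--Schreier and elementary extension theory.
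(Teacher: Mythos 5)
Your proof is correct and follows essentially the same route as the paper's presentation-based argument: the relations you extract by Reidemeister--Schreier ($\tilde\theta^{\,n-1}=[x_0]$, with the induced action on $\Z^n$ fixing $[x_0]$ and cyclically permuting $[x_1],\dots,[x_{n-1}]$) are exactly the paper's relations for $\theta\colon (a_0,a_1,\dots,a_{n-1})\mapsto(a_0,a_2,\dots,a_{n-1},a_0a_1a_0^{-1})$ with $\theta^{n-1}=\mathrm{ad}_{a_0}$. The only difference is in the finish, and it is cosmetic: you project onto the $[x_0]$-coordinate to get $Q\cong\Z$, whereas the paper either performs Tietze moves to exhibit the preimage as $\Z^{n-1}\rtimes_\psi\Z$, or (in its geometric version) checks directly that the first coordinate of $\hat\theta^{\,n-1}$ is $\equiv 1 \pmod{\,n-1}$ for every choice of lift.
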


In this section  we present three  
proofs of this fact. The first is a geometric proof that we feel gives the most insight into
the non-splitting phenomenon; this is how we discovered Proposition \ref{p:tf}. The second proof draws attention
to a topological criterion illustrated by the first proof; like the first proof, it is executed using the lower sequence in Proposition \ref{p:sameProb}. The third proof is purely algebraic.  The first and third proofs also lead to a proof of the following proposition, which completes the proof of Theorem~\ref{t:split} (and therefore of Theorem~\ref{t:main}).

\begin{proposition}\label{p:not-coprime} Let $N = F_n'F_n^r$ and let $p_r$ denote the  natural map
$\aut/N\to \out$. Then the short exact
sequence $1\to F_n/N\to p_r^{-1}\Cyc\to \Cyc\to 1$ splits if and only if $r$ is coprime to $n-1$.
\end{proposition}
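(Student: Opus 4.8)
The forward implication requires nothing new: if $r$ is coprime to $n-1$, then the splitting of $1\to F_n/N\to\aut/N\to\out\to 1$ constructed in Section~\ref{s:split} restricts over the subgroup $\Cyc<\out$ to a splitting of $1\to F_n/N\to p_r^{-1}\Cyc\to\Cyc\to 1$. So the substance of the proposition is the converse, and the plan is to deduce it from Proposition~\ref{p:tf} by a short cohomological computation (the explicit argument underlying the first proof of Proposition~\ref{p:tf} gives the same thing geometrically, as noted at the end).

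The key observation is that, since $F_n/N=F_n/F_n'F_n^r$ is the reduction $M/rM$ of $M:=F_n/F_n'\cong\Z^n$, the extension $1\to F_n/N\to p_r^{-1}\Cyc\to\Cyc\to 1$ is the pushforward along $M\twoheadrightarrow M/rM$ of the extension $1\to M\to p^{-1}\Cyc\to\Cyc\to 1$ studied in Proposition~\ref{p:tf}. Writing $\gamma\in H^2(\Cyc;M)$ for the class of the latter, the class of the former is therefore the image of $\gamma$ under the coefficient homomorphism $H^2(\Cyc;M)\to H^2(\Cyc;M/rM)$; thus the proposition asserts precisely that this image vanishes if and only if $\gcd(r,n-1)=1$. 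Since $\Cyc$ is cyclic of order $n-1$, we have $H^2(\Cyc;M)\cong\widehat H^0(\Cyc;M)=M^\theta/N_\theta M$, where $\theta$ generates $\Cyc$ and $N_\theta=1+\theta+\dots+\theta^{n-2}$. Reading off Figure~\ref{Xn}, $\theta$ acts on $M\cong\Z^n$ as the direct sum of the permutation matrix of an $(n-1)$-cycle (cyclically permuting the loops of $X_n$) and the identity on a complementary rank-one summand (the homology class of the central cycle of $X_n$); a direct calculation with this matrix gives $M^\theta/N_\theta M\cong\Z/(n-1)$.

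It remains to locate $\gamma$ in this $\Z/(n-1)$ and to push it forward, and here Proposition~\ref{p:tf} does the work. Because $p^{-1}\Cyc$ is torsion-free, the restriction of the extension to the subgroup of order $k$ is non-split for every divisor $k>1$ of $n-1$, i.e.\ $\mathrm{res}(\gamma)\neq 0$ in $H^2(\Z/k;M)$; running the same $\widehat H^0$ description for $\Z/k$ identifies restriction with the reduction map $\Z/(n-1)\to\Z/k$, so $k\nmid\gamma$ for all such $k$, whence $\gcd(\gamma,n-1)=1$ and $\gamma$ generates $H^2(\Cyc;M)$. Finally, the long exact sequence of $0\to M\to M\to M/rM\to 0$ (multiplication by $r$) identifies the kernel of $H^2(\Cyc;M)\to H^2(\Cyc;M/rM)$ with $r\cdot H^2(\Cyc;M)=r\,\Z/(n-1)$. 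Since $\gamma$ is a generator, it lies in $r\,\Z/(n-1)$ exactly when $\gcd(r,n-1)=1$, which is the assertion. (This computation can equally be carried out by hand, after choosing an explicit automorphism of $F_n$ realizing $\theta$ and computing the element of $M/rM$ by which its $(n-1)$st power fails to be trivial; one then asks whether that element lies in the image of $N_\theta$, which is the algebraic incarnation of the first argument.)

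The same conclusion is reached geometrically by working with the abelian covers of $X_n$ in place of those of the rose: by a computation entirely parallel to the verification of relation~(11) in Section~\ref{s:split}, the $(n-1)$st power of any lift of $\theta$ to a fibre-preserving homotopy equivalence of $\widehat{X}_n/r\Z^n$ is the deck transformation by a vector whose class in $M^\theta$ modulo $N_\theta M$ and $rM$ is a fixed generator, and this deck transformation is fibre-preservingly null-homotopic exactly when $r$ is coprime to $n-1$. The only steps needing care are pinning down the $\Cyc$-action on $H_1(X_n)$, the $\widehat H^0$ calculation, and the bookkeeping identifying restriction with reduction; I expect no genuine obstacle here, precisely because Proposition~\ref{p:tf} already supplies the nontrivial input — indeed this makes clear why it is worth establishing the stronger torsion-freeness statement there rather than mere non-splitting, since non-splitting alone would only tell us $\gamma\neq 0$, not that $\gamma$ generates.
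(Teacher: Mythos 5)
Your proof is correct, but it takes a genuinely different route from the paper's. The paper establishes the converse twice by direct computation with set-theoretic lifts of the generator: geometrically, any lift $\hat\cyc'$ of $\cyc$ to $\widehat X/r\Z^n$ has $(n-1)$st power equal to the deck transformation $(1+s(n-1),\,t_1(n-1),\dots,t_{n-1}(n-1))$, and algebraically, any element $vx$ of $p_r^{-1}\Cyc$ above $[\theta]$ satisfies $(vx)^{n-1}=\a_0^{m_0(n-1)+1}\mu^{m_1+\dots+m_{n-1}}$; in either form the exponent of $\a_0$ cannot vanish mod $r$ unless $r$ is coprime to $n-1$. You instead recognise the mod-$r$ extension as the pushforward of the integral one, compute $H^2(\Cyc;M)\cong M^\theta/N_\theta M\cong\Z/(n-1)$, and use the torsion-freeness of Proposition~\ref{p:tf} (restricted to every nontrivial subgroup of $\Cyc$) to conclude that the integral class is a generator, hence dies under reduction mod $r$ precisely when $r$ is a unit mod $n-1$. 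The underlying arithmetic is identical --- your $N_\theta(v)\cdot\a_0$ is exactly the paper's $(vx)^{n-1}$, and the class being $[\a_0]$, a generator, can be read off directly from that formula, as you note parenthetically --- but your packaging has two virtues: it treats all $r$ simultaneously through a single class in $\Z/(n-1)$, and it explains why the paper bothers to prove torsion-freeness rather than mere non-splitting (non-splitting alone would only give $\gamma\neq 0$). The paper's version is shorter and self-contained; in particular its forward direction is a one-line splitting $[\theta]\mapsto x^{tr}$, whereas yours leans on the full construction of Section~\ref{s:split}. The only points in your argument that deserve explicit care are the identification of restriction on $\widehat H^0$ with reduction $\Z/(n-1)\to\Z/k$ (which works because restriction is induced by the inclusion $M^{\Cyc}\subseteq M^{C_k}$ and $[\a_0]$ generates both quotients), and the compatibility of the periodicity isomorphisms with restriction --- but since you only use that the restricted classes are nonzero, unit ambiguities are harmless.
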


\subsection{A direct geometric proof }

At several points in the following argument we use the elementary fact that if a connected metric graph is a union of (at least two) embedded circuits, then an isometry that is homotopic to the identity is actually equal to the identity. 

Let $n\ge 3$ be an integer and let $X=X_n$  be the
graph that has $(n-1)$ vertices,
contains a simple loop of length $(n-1)$ and has a
loop of length $1$ at each of its vertices (see Figure~\ref{Xn}).
\begin{figure}
\begin{center}
\includegraphics[width=2in]{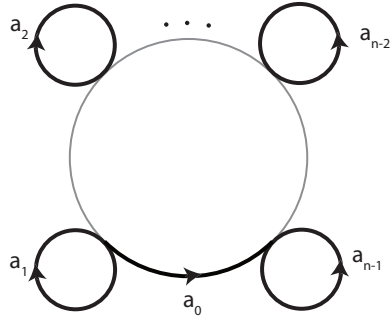}
\end{center}
\caption{The graph $X$}
\label{Xn}
\end{figure}

We fix a maximal tree in the graph, label the remaining
edge on the long circuit $a_0$, and label the loops of
length $1$ in cyclic order, proceeding around the long
cycle: $a_1,\dots, a_{n-1}$. This provides an identification
of $F_n$   with $\pi_1X$.

Consider the maximal abelian cover of $X$,
that is the graph $\widehat X=\widetilde X/F_n'$. The Galois group of this
covering is $F_n/F_n'\cong\Z^n$ and it is helpful to visualise
the following embedding of $\widehat X$ in $\R^n$ (see Figure~\ref{covering}).

\begin{figure}
\begin{center}
\includegraphics[width=4in]{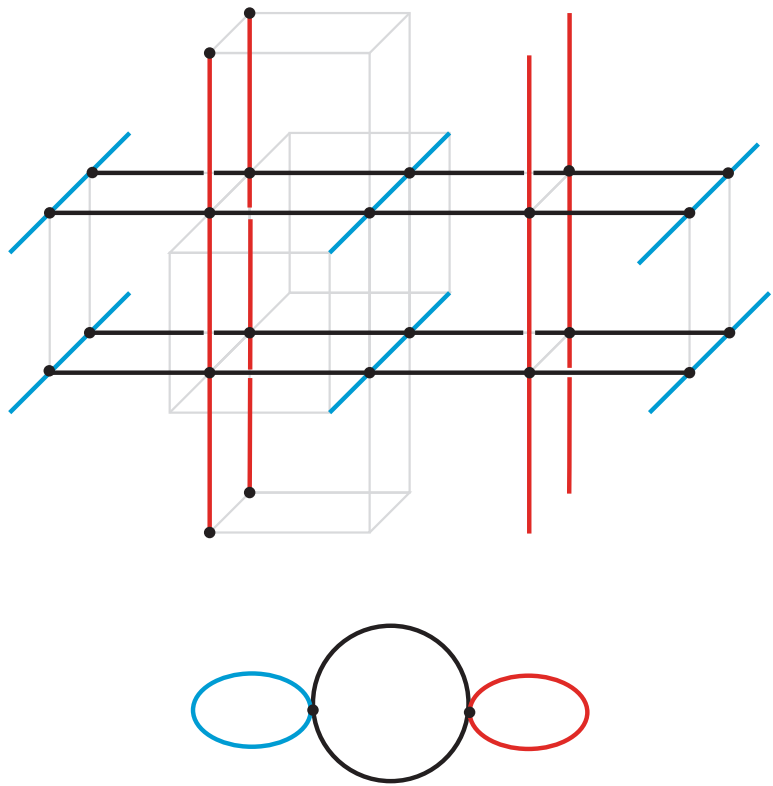}
\end{center}
\caption{Maximal abelian cover of $X=X_n$, $n=3$.}
\label{covering}
\end{figure}

Fix rectangular coordinates $x_0,\dots,x_{n-1}$ on $\R^n$
and define $\widehat X$ to be the union of the following $n$
families of lines: family $\mathcal{L}_0$ consists of
all lines parallel to the $x_0$-axis that have integer $x_i$-coordinates for all $i>0$, while $\mathcal{L}_i$ consists of
all lines parallel to the $x_i$ axis that have integer coordinates
for all $j\neq i$ with $j>0$ and which have $x_0$-coordinate
an integer that is congruent to $i \mod (n-1)$.

The action of the Galois group $F_n/F_n'\cong\Z^n$ is by translations in the
coordinate directions, with $a_i$ acting as translation by a distance
$1$ in the $x_i$ direction for $i=1,\dots,n-1$, and with $a_0$
acting as translation by a distance $(n-1)$ in the $x_0$ direction.

Now consider the isometry $\cyc$
of $X$ that rotates the long cycle
through a distance $1$, carrying the oriented loop labelled $a_i$
to that labelled $a_{i+1}$ for $i=1,\dots,n-2$
and taking $a_{n-1}$ to $a_1$.  This isometry has order $n-1$.  

A lift $\hat\cyc$ of $\cyc$ to $\widehat X$ is obtained as follows:
$$\hat\cyc (y_0,\dots,y_{n-1}) = (y_0+1,
y_{n-1}, y_1, y_2,\dots,y_{n-2}).$$
In other words, $\hat\cyc$ shifts by $1$ unit in the $x_0$-direction and permutes the
positive axes of the other generators cyclically.
In particular, $\hat\cyc^{n-1}$ is the deck transformation corresponding to  $[a_0]=(1,0,\ldots,0)\in \Z^n=F_n/F_n'$, so is not homotopic to the identity.  Any power of $\hat\cyc$ which is not a multiple of $n-1$ sends the axis for $a_1$ to a translate of the axis for $a_k$, for some $k\neq 1$, so is again not homotopic to the identity.  This shows that $\hat\cyc$ has infinite order in $\FHE(\widehat X)$.   

If we choose a different lift $\hat\cyc^\prime$ of $\cyc$, then it differs from $\hat\cyc$ by some deck transformation $(s,t_1,\ldots,t_{n-1})\in F_n/F_n'$.  Then $(\hat\cyc^\prime)^{n-1}$ is the
deck transformation $(1+s(n-1), t_1(n-1),\ldots, t_{n-1}(n-1)) $, which is non-trivial (hence not homotopic to the identity) for any $s$ if $n>2$. Thus $\hat\cyc^\prime$ has infinite order in $\FHE(\widehat X)$.   This proves Proposition \ref{p:tf}.

If we look mod $r$ (i.e. work modulo the action of $F_n/F_n'F_n^r= r\Z^n$), then
the last deck transformation considered above can  become trivial:  the equation $$1+s(n-1) \equiv 0 \mod r$$ has a solution if and only if $(r,n-1)=1$, so a lift $\hat\cyc_{(r)}$ of $\cyc$  to a fiber-homotopy equivalence of $\widehat X_{(r)}=\widehat X/r\Z^n$ can be chosen  so that $\hat\cyc_{(r)}^{n-1}$ is homotopic (in fact equal) to the identity  if and only if $(r,n-1)=1$.
This proves Proposition \ref{p:not-coprime}. \qed

\subsection{A topological obstruction to splitting}

The finite cyclic group generated by $\cyc$ acts freely on the graph $X_n$, and $X_n$ can be embedded into the torus $T^n$ in such a way that the  action   extends.   The kernel of the map induced on fundamental groups by this embedding is exactly the commutator subgroup $F_n'$.  Both $X_n$ and $T^n$ are aspherical spaces. In this section we show that the non-splitting of the short exact sequence of Proposition~\ref{p:tf} is an example of a more general phenomenon associated to this type of situation.

Let $G$ be a group acting freely by homeomorphisms on
a connected CW-complex $X$, and let $\widetilde X$ denote the universal cover.
Let $\widehat G\subset {\rm{Homeo}}(\widetilde X)$ be the
   subgroup
of $ {\rm{Homeo}}(\widetilde X)$ generated by all lifts of  elements of $G$.
(If the action of $G$ is properly discontinuous, then $\widehat G$
 is isomorphic to the fundamental group of $X/G$.)
There is an obvious short exact sequence
$$
1\to \pi_1X\to \widehat G\to G\to 1.
$$

More generally, if the action of $G$ leaves invariant a normal
subgroup $N\subset \pi_1X$ then we write $\widehat G_N$ for the
group of all lifts of the elements of $G$ to $\widetilde X/N$. There is
short exact sequence
$$
1\to \pi_1X/N\to \widehat G_N\to G\to 1,
$$
where $\pi_1X/N$ is the Galois group of the covering $\widetilde X/N\to X$.

\begin{lemma} The action of $\widehat G_N$ on $\widetilde X/N$ is free.
\end{lemma}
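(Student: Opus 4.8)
The plan is to reduce freeness of the $\widehat G_N$-action to two standard facts: that $G$ acts freely on $X$ (which is hypothesized), and that the deck group of a \emph{connected} covering acts freely. Write $p\colon\widetilde X/N\to X$ for the covering projection. First I would take an element $\widehat g\in\widehat G_N$ with a fixed point $\widehat x\in\widetilde X/N$; by construction $\widehat g$ is a lift of some $g\in G$, which is to say $p\circ\widehat g=g\circ p$.

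The next step is to push the fixed-point relation down to $X$: applying $p$ to $\widehat g(\widehat x)=\widehat x$ gives $g(p(\widehat x))=p(\widehat x)$, so $g$ fixes the point $p(\widehat x)\in X$. Since $G$ acts freely on $X$, this forces $g=1$. Equivalently, $\widehat g$ lies in the kernel $\pi_1X/N$ of $\widehat G_N\to G$, i.e.\ $p\circ\widehat g=p$, so $\widehat g$ is a deck transformation of the covering $p$.

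Finally, because $\widetilde X$ is connected its quotient $\widetilde X/N$ is connected, and a deck transformation of a connected covering that fixes a point is the identity: $\widehat g$ and $\mathrm{id}$ are both lifts of $p$ through $p$ agreeing at $\widehat x$, hence coincide by the unique lifting property (graphs and CW-complexes being locally path-connected, this applies). Thus $\widehat g=\mathrm{id}$, and $\widehat G_N$ acts freely on $\widetilde X/N$.

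I do not expect a genuine obstacle here; the only points that merit a word of care are that the relevant lifting identity $p\widehat g=gp$ must be taken with respect to the composite covering $\widetilde X/N\to X$ (so that it is available for every element of $\widehat G_N$, not merely for lifts to the universal cover), and that connectedness of $\widetilde X/N$ is exactly what legitimizes the unique-lifting step. Note also that no properness or proper discontinuity of the $G$-action is needed for this lemma.
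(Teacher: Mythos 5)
Your argument is correct and is essentially the paper's own proof: project the fixed-point relation to $X$, use freeness of the $G$-action to conclude the element is a deck transformation, and then use that deck transformations of a connected covering act freely. The paper states the last step without elaboration; your appeal to the unique lifting property is just the standard justification of that fact.
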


\begin{proof} If an element $\gamma\in\widehat G_N$ had a fixed point in $\widetilde X/N$
then its image in $G$ would fix a point of $X$. Since the action of $G$
is free, $\gamma$ would have to lie in the kernel of $\widehat G_N\to G$. But this kernel
is the group of deck transformations, which acts freely.
\end{proof}

\begin{lemma} \label{l:tf}
If $X$ is finite dimensional and
 aspherical then $\widehat G$ is torsion free.
\end{lemma}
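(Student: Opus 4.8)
The plan is to derive this from the preceding lemma together with the classical fact that no nontrivial finite group acts freely on a finite-dimensional contractible complex. First I would apply the preceding lemma with $N=1$: it says precisely that $\widehat G$ acts freely on $\widetilde X$. Since $X$ is a connected aspherical CW-complex, its universal cover $\widetilde X$ is simply connected with $\pi_i(\widetilde X)=\pi_i(X)=0$ for all $i\ge 2$, hence contractible by Whitehead's theorem; and, being a covering space of $X$, it is a CW-complex of the same finite dimension $d:=\dim X$. So the whole question reduces to showing that a group acting freely on a finite-dimensional contractible CW-complex is torsion-free.

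Now suppose, for contradiction, that $\widehat G$ contained an element of finite order greater than $1$. Then it would contain a subgroup $H\cong \Z/p$ for some prime $p$, and by the preceding lemma $H$ acts freely on $\widetilde X$. A free action of a finite group on a Hausdorff space is automatically properly discontinuous, so $\widetilde X\to \widetilde X/H$ is a covering projection; since $\widetilde X$ is contractible, $\widetilde X/H$ is a $K(\Z/p,1)$, and it has covering dimension at most $d$, being locally homeomorphic to $\widetilde X$. This is impossible: the cohomology $H^i(\Z/p;\mathbb F_p)$ is nonzero for every $i\ge 0$, whereas a $d$-dimensional $K(\Z/p,1)$ can have no cohomology above degree $d$. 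Hence $\widehat G$ has no torsion.

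The only genuinely delicate point is the last dimension count above: the $H$-action on $\widetilde X$ is given merely by homeomorphisms, so $\widetilde X/H$ need not a priori carry a CW-structure of dimension $d$, and one must be a little careful passing from ``$\widetilde X$ is a finite-dimensional CW-complex'' to ``$\operatorname{cd}(\Z/p)<\infty$''. I expect this to be the main obstacle, and the cleanest way to dispose of it is to invoke P.\,A.\ Smith theory directly, in the form valid for topological actions: a finite $p$-group acting on a finite-dimensional paracompact $\mathbb F_p$-acyclic space has nonempty fixed-point set. Finite-dimensional CW-complexes are paracompact and a contractible space is $\mathbb F_p$-acyclic, so $\Z/p$ cannot act freely on $\widetilde X$, giving the required contradiction. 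Alternatively one may first replace $\widetilde X$ by an $H$-equivariant CW-approximation so that the quotient is a genuine $d$-dimensional $K(\Z/p,1)$, or simply quote the standard statement that a group acting freely and properly discontinuously on a finite-dimensional contractible CW-complex has finite cohomological dimension and is therefore torsion-free.
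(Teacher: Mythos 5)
Your proof is correct and follows essentially the same route as the paper's: apply the preceding lemma to get a free action of a finite cyclic subgroup on the contractible, finite-dimensional space $\widetilde X$, and derive a contradiction from the fact that such a group has cohomology in infinitely many dimensions. The paper's version is a two-line sketch that glosses over the point you flag (that the quotient need not obviously be a finite-dimensional CW-complex); your explicit appeal to Smith theory or equivariant CW-approximation is a legitimate way to fill that gap, not a deviation from the argument.
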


\begin{proof} If $\widehat G$ had a non-trivial element of finite order, say $\gamma$,
then by the previous lemma we would have
a free action of the finite group $C=\<\gamma\>$ on the contractible
finite dimensional space $\widetilde X$, contradicting the fact that $C$
has cohomology in infinitely many dimensions.
\end{proof}

\begin{example} If $X$ is a graph and the action of $G$ is properly discontinuous
(e.g. by graph isometries) then $\widehat G$ is the fundamental
group of a graph and hence is free.
\end{example}

\begin{proposition}\label{p:aspherical}
Let $G$ be a group acting on finite-dimensional, connected CW-complexes   $X$ and $Y,$ and let  $f\colon X\hookrightarrow Y$ be an equivariant embedding.   Let $N$ be the kernel\footnote{This is well-defined as it is normal and
a change of basepoint isomorphism produces no ambiguity mod conjugacy.} of the induced map
 $\pi_1X\to \pi_1Y$ and consider the short exact sequence 
$$
1\to \pi_1X/N\to \widehat G_N\to G\to 1.
$$
 If $Y$ is aspherical and the action of $G$ on $Y$ is free, then 
$\widehat G_N$ is torsion-free.
\end{proposition}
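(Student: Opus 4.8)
The plan is to reduce Proposition~\ref{p:aspherical} to Lemma~\ref{l:tf} by producing, from the data $f\colon X\hookrightarrow Y$, a new space on which $G$ acts freely, which is aspherical and finite-dimensional, and whose associated group of lifts is precisely $\widehat G_N$. The natural candidate is the covering space $Y_0\to Y$ corresponding to the subgroup $f_*\pi_1X\le\pi_1Y$ (equivalently, $\pi_1X/N$), equipped with the lifted $G$-action. First I would observe that since the $G$-action on $Y$ is free and leaves invariant the subgroup $f_*\pi_1X$ (because $f$ is equivariant and $N$ is characteristic-enough to be $G$-invariant — indeed $N$ is the kernel of a $G$-equivariant map, hence $G$-invariant), the $G$-action lifts to $Y_0$, and the group of all such lifts is an extension $1\to\pi_1X/N\to\widehat G_N'\to G\to 1$ sitting inside $\mathrm{Homeo}(Y_0)$. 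The equivariant embedding $f$ lifts to an equivariant embedding $X\hookrightarrow Y_0$ inducing a $\pi_1$-\emph{isomorphism} (this is the point of choosing exactly the cover corresponding to $f_*\pi_1X$), so the two extensions — the one built from $X$ via $N\subset\pi_1X$ and the one built from $Y_0$ via its deck group — are canonically identified; in particular $\widehat G_N'\cong\widehat G_N$.

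Next I would check the hypotheses of Lemma~\ref{l:tf} for the space $Y_0$: it is a covering of $Y$, hence finite-dimensional (covers do not raise dimension) and aspherical (covers of aspherical spaces are aspherical, since they have the same universal cover). The $G$-action on $Y_0$ is free: a fixed point of $g\in G$ on $Y_0$ would project to a fixed point of $g$ on $Y$, contradicting freeness of the $G$-action on $Y$ — unless $g=1$. So all the hypotheses of Lemma~\ref{l:tf} hold for $(G, Y_0)$, and that lemma gives that $\widehat{G}$ computed for $Y_0$ — which we have identified with $\widehat G_N$ — is torsion-free. This completes the argument.

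The one step that requires genuine care, and which I expect to be the main obstacle, is the canonical identification of the two short exact sequences: I must verify that the group of lifts of $G$ to $\widetilde X/N$ (the definition of $\widehat G_N$ used in the statement) really does agree with the group of lifts of $G$ to $Y_0$. The key fact making this work is that $f$ induces $\pi_1X/N\xrightarrow{\ \cong\ }\pi_1Y_0$, so that $\widetilde X/N$ and $\widetilde{Y_0}/(\text{nothing})$ — more precisely $\widetilde X/N$ and the cover of $X$ pulled back from $Y_0$ — are the \emph{same} covering of $X$, and the ambient CW-complex $Y_0$ deformation-retracts onto (or at least contains equivariantly, inducing $\pi_1$-iso) the image of $\widetilde X/N$. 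A change-of-basepoint subtlety lurks here, but it is harmless mod conjugacy exactly as flagged in the footnote to the statement. Once this identification is in hand, everything else is a direct appeal to Lemma~\ref{l:tf} together with the elementary stability of "finite-dimensional" and "aspherical" under passage to covers.
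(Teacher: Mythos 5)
Your overall strategy --- realize $\widehat G_N$ as a group of lifts to a suitable cover of $Y$ and then invoke Lemma~\ref{l:tf} --- is the same as the paper's, but you have chosen the wrong cover, and the identification you yourself flag as the crux is false. If $Y_0\to Y$ is the cover corresponding to $H:=f_*\pi_1X$, then the pullback of $Y_0$ along $f$ is \emph{not} $\widetilde X/N$: the basepoint component of the preimage of $f(X)$ in $Y_0$ is the cover of $X$ corresponding to $f_*^{-1}(H\cap f_*\pi_1X)=f_*^{-1}(H)=\pi_1X$, i.e.\ a homeomorphic copy of $X$ itself. So $f$ lifts to an embedding $X\hookrightarrow Y_0$ inducing the quotient map $\pi_1X\twoheadrightarrow \pi_1X/N$ (an isomorphism only when $N=1$), and $\widetilde X/N$ does not sit inside $Y_0$ in the way your argument requires. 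Likewise, the kernel of $\{\text{lifts of }G\text{ to }Y_0\}\to G$ is the deck group of $Y_0\to Y$, namely $N_{\pi_1Y}(H)/H$, not $H\cong\pi_1X/N$, so your claimed extension $1\to\pi_1X/N\to\widehat G_N'\to G\to 1$ is not what the lifts give you. The failure is stark in the very situation the proposition is used for ($X_n\hookrightarrow T^n$, where $f_*$ is surjective): there $Y_0=Y$, the group of lifts is just $G$ (finite), and it cannot be identified with $\widehat G_N$, which contains $\Z^n$. A further problem is that even granting your identification, Lemma~\ref{l:tf} applied to $(G,Y_0)$ would yield torsion-freeness of the group of lifts to $\widetilde{Y_0}$, not of a group of homeomorphisms of $Y_0$; since $Y_0$ is aspherical but not contractible, freeness of an action on $Y_0$ by itself does not exclude torsion.

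The repair is to take the cover whose fundamental group meets $f_*\pi_1X$ trivially, i.e.\ the universal cover $\widetilde Y$ itself. Because $N=\ker f_*$, the composite $\widetilde X/N\to X\to Y$ lifts to an equivariant \emph{embedding} $\widetilde X/N\hookrightarrow\widetilde Y$, and restriction to its image identifies $\widehat G_N$ with the subgroup of $\widehat G^Y$ (the group of \emph{all} lifts of elements of $G$ to $\widetilde Y$) that preserves that image. Lemma~\ref{l:tf}, applied directly to $Y$, shows that $\widehat G^Y$ is torsion-free, and a subgroup of a torsion-free group is torsion-free. This is exactly the paper's proof.
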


\begin{proof} Let $\widehat G^Y$ be the group of all lifts to $\widetilde Y$
for the action of $G$ on $Y$.
The embedding $f:X\to Y$ lifts to an embedding
$\widetilde X/N\to \widetilde Y$ that induces an isomorphism
from $\widehat G_N$ to the subgroup of $\widehat G^Y$ that preserves the image
of $\widetilde X/N$. (This will be the whole of $\widehat G^Y$ if and only if
$f_*:\pi_1X\to\pi_1Y$ is surjective.)

Lemma \ref{l:tf} applied to $Y$ shows that $\widehat G^Y$ is torsion-free. 
\end{proof}

{\bf{Proof of Proposition \ref{p:tf}.}}  
We consider the graph $X_n$ shown in figure \ref{Xn} and
the  cyclic group $\Cyc$ of order $(n-1)$ that acts freely on
the graph, permuting the vertices in cyclic order. We embed $X_n$
in an $n$-dimensional torus $T$ by quotienting the embedding $\widehat X_n\to\R^n$
of the previous section by the action of $F_n/F_n'\cong\Z^n$ (see Figure~\ref{XinT}).

\begin{figure}
\begin{center}
\includegraphics[width=2in]{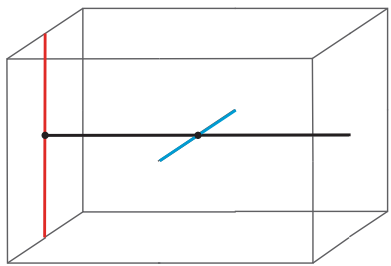}
\end{center}
\caption{$X_3$ embedded in a torus}
\label{XinT}
\end{figure}

We make the generator of $\Cyc$ act on $T$ by translation in the $x_0$ direction
through a distance $1$ followed by the rotation that leaves invariant the $x_0$
direction and permutes the coordinates $x_1,x_2,\dots,x_{n-1}$ cyclically.

This action in free and the embedding $X_n\to T$ is equivariant. Thus we are in the situation of Proposition~\ref{p:aspherical}, and Proposition \ref{p:tf} is proved.

\subsection{A proof using presentations}

We are interested in the short exact sequence
$$
1\to F_n/F_n' \to \aut/F_n'\overset{\pi}\to \out \to 1 .
$$
Let $\Cyc\subset\out$ be the subgroup generated
by the class of the automorphism
$$\cyc:(a_0,a_1,\dots,a_{n-2},a_{n-1})
\mapsto (a_0,a_2,\dots,a_{n-1}, a_0a_1a_0^{-1}).
$$
Note that $\Cyc$ is cyclic of order $(n-1)$ but that
$\cyc$ has infinite order in $\aut$, since it is a
root of the inner automorphism by $a_0$.

Let $\hCyc = \pi^{-1}\Cyc\subset\aut/F_n'$, so the above short exact sequence restricts to:
$$
1\to F_n/F_n' \to \hCyc\overset{\pi}\to \Cyc \to 1 .
$$
We produce a presentation for   $\hCyc$  using a standard
 procedure  for constructing presentations of group extensions; this
is explained, e.g.,  in \cite{Johnson}, Theorem 1, p. 139.   
We fix a
basis $\{a_0,\dots,a_{n-1}\}$ for the free group
$F_n$ and write $\alpha_i$ for the image in  $\aut/F_n'$
of the inner automorphism $w\mapsto a_iwa_i^{-1}$.  Then $F_n/F_n'$ is generated by the $\alpha_i$ subject to the relations $[\alpha_i,\alpha_j]=1,$ and $\Cyc$ is generated by the image of $\theta$ subject to the relation that this image has order $n-1$.    The automorphisms $\alpha_i$ and $\cyc$ satisfy the following relations:
\begin{enumerate}
\item $\cyc\a_{0}\cyc^{-1} = \a_0$
\item $\cyc\a_i\cyc^{-1}=\a_{i+1}$ for $i=1,\ldots,n-2$
\item $\cyc\a_{n-1}\cyc^{-1} = \a_0^{-1}\a_1\a_0$
\item $\cyc^{n-1}=\a_0$
\end{enumerate}
and the theorem cited above assures us that (introducing a
generator $x$ to represent $\cyc$) these
relations suffice to present $\hCyc$:
$$
 \hCyc \cong \<\a_0,\dots,\a_{n-1},x \mid [\a_i,\a_j]=1 {\text{ for $i,j=0,\dots,n-1$}},$$
$$x\a_{0}x^{-1} = \a_0,\,\, x\a_i x^{-1}=\a_{i+1} {\text { for $i=1,\ldots,n-2$}}, $$
$$
x\a_{n-1}x^{-1} = \a_0\a_1\a_0^{-1},\,\, x^{n-1}=\a_0\rangle
$$

\begin{proposition}\label{p:pres}
$\hCyc \cong \Z^{n-1}\rtimes_\psi\Z$ where
$\psi$ is the automorphism that permutes a free basis
$\{\a_1,\dots,\a_{n-1}\}$ cyclically.  In particular, $\hCyc$ is torsion-free.
\end{proposition}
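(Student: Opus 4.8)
The plan is to turn the displayed presentation of $\hCyc$ into the standard presentation of a semidirect product by a short sequence of Tietze transformations. The key move is to use relation (4), $x^{n-1}=\a_0$, to eliminate the generator $\a_0$ entirely, substituting $x^{n-1}$ for every occurrence of $\a_0$.

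After this substitution, the relations $[\a_0,\a_j]=1$ become $[x^{n-1},\a_j]=1$; the relation $x\a_0x^{-1}=\a_0$ becomes trivial; and the wrap-around relation $x\a_{n-1}x^{-1}=\a_0\a_1\a_0^{-1}$ becomes $x\a_{n-1}x^{-1}=x^{n-1}\a_1x^{1-n}$, which, using the relator $[x^{n-1},\a_1]=1$, simplifies to $x\a_{n-1}x^{-1}=\a_1$. I would then observe that the relators $[x^{n-1},\a_j]=1$ have become redundant: the relations $x\a_ix^{-1}=\a_{i+1}$ for $1\le i\le n-2$ together with $x\a_{n-1}x^{-1}=\a_1$ say exactly that conjugation by $x$ cyclically permutes $\a_1,\dots,\a_{n-1}$, so conjugation by $x^{n-1}$ fixes each of them. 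Deleting these redundant relators leaves
$$\hCyc\cong\langle \a_1,\dots,\a_{n-1},x\mid[\a_i,\a_j]=1,\ x\a_ix^{-1}=\a_{i+1}\ (i<n-1),\ x\a_{n-1}x^{-1}=\a_1\rangle,$$
which is precisely the standard presentation of $A\rtimes_\psi\Z$, where $A=\langle\a_1,\dots,\a_{n-1}\mid[\a_i,\a_j]=1\rangle\cong\Z^{n-1}$, the stable letter is $x$, and $\psi\in{\rm{Aut}}(A)$ is the cyclic permutation of the basis — recall that adjoining to any presentation of a group $A$ a stable letter $t$ together with the relations $tst^{-1}=\psi(s)$, $s$ ranging over a generating set, yields $A\rtimes_\psi\Z$. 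Hence $\hCyc\cong\Z^{n-1}\rtimes_\psi\Z$. Torsion-freeness is then immediate: the quotient map $\hCyc\to\Z$ with kernel $\Z^{n-1}$ has torsion-free kernel and torsion-free image, and an extension of a torsion-free group by a torsion-free group is torsion-free.

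I do not expect a serious obstacle here; the one place to be careful is the first Tietze step, specifically the observation that after putting $\a_0=x^{n-1}$ the wrap-around relation collapses to the honest cyclic permutation $x\a_{n-1}x^{-1}=\a_1$, so that the centrality relations $[x^{n-1},\a_j]=1$ become consequences of the conjugation relations rather than extra constraints. This is the algebraic counterpart of the fact that, unlike the mod-$r$ situation of Proposition \ref{p:not-coprime}, no coprimality hypothesis is needed when one works with the full maximal abelian cover.
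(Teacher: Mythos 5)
Your proof is correct and follows essentially the same route as the paper: Tietze moves eliminating $\a_0$ via $\a_0=x^{n-1}$, collapsing the wrap-around relation to $x\a_{n-1}x^{-1}=\a_1$, and observing that the commutation relations involving $\a_0$ become consequences of the conjugation relations. The only (immaterial) difference is the order of the two simplification steps, and your explicit torsion-freeness argument fills in what the paper leaves as an immediate consequence.
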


\begin{proof}  We use  Tietze moves to simplify our presentation of $\hCyc$. First we use $[\a_0,\a_1]=1$ to replace
$x \a_{n-1} x^{-1} = \a_0\a_1 \a_{0}^{-1}$ by
$x\a_{n-1} x^{-1}  = \a_1$. Next we use the last relation
to remove the superfluous generator $\a_0$, replacing
it by $x^{n-1}$ in the other relations where it appears.
But in fact, all of the relations where $\a_0$ appeared
become redundant when we substitute $x^{n-1}$: this
is obvious for  $x\a_{0}x^{-1} = \a_0$, and in the remaining cases
one can deduce $[x^{n-1},\a_i]=1$ by combining the
relations $x \a_i x ^{-1}= \a_{i+1}$ and
$x\a_{n-1} x^{-1} = \a_1$.

At the end of these moves we are left with the presentation
$$ \hCyc \cong \<\a_1,\dots,\a_{n-1},x
\mid [\a_i,\a_j] {\text{ for $i,j=1,\dots,n-1$}}, \
$$
$$
x \a_i x^{-1} = \a_{i+1} {\text{ for $i=1,\dots,n-2$}}, \ x \a_{n-1} x ^{-1}= \a_1\>,
$$
which is the natural presentation of 
$\Z^{n-1}\rtimes_\psi\Z$.
\end{proof}

\begin{corollary} $\hCyc$ is the fundamental group of
a closed, flat $n$-manifold that fibres over the circle with holonomy of order
$(n-1)$.
\end{corollary}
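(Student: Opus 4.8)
The plan is to read off the geometric content of Proposition~\ref{p:pres} together with the discussion surrounding Proposition~\ref{p:tf}. By Proposition~\ref{p:pres} we have $\hCyc\cong\Z^{n-1}\rtimes_\psi\Z$, where $\psi$ cyclically permutes a basis of $\Z^{n-1}$. A group of the form $\Z^{n-1}\rtimes_\phi\Z$ with $\phi\in\mathrm{GL}(n-1,\Z)$ is always the fundamental group of a manifold fibring over the circle, namely the mapping torus of the self-homeomorphism of the $(n-1)$-torus $T^{n-1}=\R^{n-1}/\Z^{n-1}$ induced by $\phi$; this mapping torus is a closed $n$-manifold fibring over $S^1$ with fibre $T^{n-1}$ and monodromy $\phi$. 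So the first step is simply to invoke this standard construction with $\phi=\psi$.

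The second step is to observe that this manifold is \emph{flat}. Since $\psi$ is a finite-order element of $\mathrm{GL}(n-1,\Z)$ (its $(n-1)$-st power is the identity, as the permutation it induces is an $(n-1)$-cycle), the matrix $\psi$ is conjugate in $\mathrm{GL}(n-1,\R)$ to an orthogonal matrix — indeed one can average a Euclidean inner product over the cyclic group $\langle\psi\rangle$ to make $\psi$ an isometry of the resulting flat torus. Thus $\psi$ acts as an affine isometry of $T^{n-1}$, and the mapping torus inherits a flat Riemannian metric: concretely $\hCyc$ acts freely, properly discontinuously and cocompactly on $\R^n=\R^{n-1}\times\R$ by isometries (the $\Z^{n-1}$ factor by translations in the first $n-1$ coordinates, and $x$ by the isometry $(\mathbf v,t)\mapsto(\psi\mathbf v,\,t+1)$), so by Bieberbach the quotient is a closed flat $n$-manifold with fundamental group $\hCyc$. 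The holonomy group is the image of the linear parts, which is exactly $\langle\psi\rangle\cong\Z/(n-1)$, of order $n-1$.

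Finally, the fibration over the circle is the one coming from the semidirect-product structure: the projection $\hCyc\to\Z$ killing $\Z^{n-1}$ is realized by the bundle map from the mapping torus to $S^1$, with fibre $T^{n-1}$. There is essentially no obstacle here; the only point requiring a word of care is the averaging argument that exhibits $\psi$ as an isometry, and the identification of the holonomy with the linear part $\langle\psi\rangle$ — both are routine given that $\psi$ has order $n-1$.
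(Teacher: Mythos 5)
Your argument is correct and is precisely the (omitted) justification the paper intends: the corollary is stated as an immediate consequence of Proposition~\ref{p:pres}, via the standard observation that $\Z^{n-1}\rtimes_\psi\Z$ with $\psi$ a finite-order permutation matrix is the fundamental group of the mapping torus of an isometric finite-order diffeomorphism of $T^{n-1}$, which is a closed flat $n$-manifold fibring over $S^1$ with holonomy $\langle\psi\rangle\cong\Z/(n-1)$. All the details you supply (averaging the metric, freeness of the affine action, identification of the holonomy with the linear parts) are accurate.
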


\noindent{\bf Proof of Propositions \ref{p:tf} and \ref{p:not-coprime}}

From our original presentation of $\hCyc$
%noting that $x^{n-1}=a_0$$
we readily
deduce the following presentation
for the preimage $\Cyc(n,r)$ of $\Cyc$ in $\aut/F_n'F_n^r$
$$\<\a_0,\a_1,\dots,\a_{n-1},x
\mid \a_i^r=1=[\a_i,\a_j]  {\text{ for $i=0,\dots,n-1$}},$$
$$
x \a_i x^{-1} = \a_{i+1} {\text{ for $i=1,\dots,n-2$}},$$
$$
x \a_{n-1} x^{-1} = \a_1,\ \a_0=x^{n-1}  \>,
$$
and making Tietze moves as above
we see that $\widehat \Cyc(n,r)$ is a semidirect product
$\widehat \Cyc(n,r) \cong (\Z/r)^{n-1}\rtimes_\psi\Z/{r(n-1)}$; in particular
$x$ has order $r(n-1)$.

Let $N=F_n/F_n'F_n^r\cong (\Z/r)^n$ denote the subgroup generated by the $\a_i$.
We are interested in when we can split
$$
1\to N\to \widehat \Cyc(n,r)\to \Cyc\to 1.
$$
If $r$ is coprime to $n-1$, then there exists an integer $t$
such that $tr \equiv 1 \mod (n-1)$, so $[\theta]^{tr} = [\theta]$ in $\Cyc$
and we can split the above sequence by sending the generator $[\theta]\in\Cyc$
to $x^{tr}$, noting that  
$$
(x^{tr})^{n-1}= (x^{r(n-1)})^t=1.
$$

It remains to prove that if $r$ is not coprime to $n-1$ then there is
no splitting. To establish this, we consider an arbitrary element in
the preimage of $[\theta]$ and examine whether it can have order
$n-1$. Such an element has the form
$vx$, where $v=\a_0^{m_0}\a_1^{m_1}\dots \a_{n-1}^{m_{n-1}}$.
From our presentation of $\Theta(n,r)$ we see that
$$
(vx)^{n-1} = v.(x vx^{-1}).(x^2vx^{-2}).(x^3vx^{-3}).\dots.(x^{n-2}vx^{2-n}).
x^{n-1}$$
can be simplified to
$$
(vx)^{n-1} = v.\psi(v).\psi^2(v).\psi^3(v).\dots.\psi^{n-2}(v).
a_0.
$$
And since
$$\a_i.\psi(\a_i).\psi^2(\a_i).\psi^3(\a_i).\dots.\psi^{n-2}(\a_i) = \mu:=
\a_1\a_2\dots\a_{n-1}$$
for $i=1,\dots,n-1$, while $x\a_0x^{-1}=\a_0$, we have
$$
(vx)^{n-1}  = \a_0^{m_0(n-1)+1} \mu^{m_1+\dots+m_{n-1}}.
$$
In order for this to equal the identity in $\Cyc(n,r)$ the exponent
of $\a_0$ has to be zero mod $r$. But this is impossible, because
$n-1$ is not coprime to $r$ and hence there is no integer $m_0$
such that $m_0(n-1)+1\equiv 0 \mod r$. \qed

\section{Theorem B: A cohomological remark}\label{s:Mdual}

Let $M=H_1(F_n)$ be the standard left-module for the left action of $\out$.  In the previous section we exhibited
an extension $1\to M\to \aut /F_n'\to \out \to 1$ which does not split
and therefore determines a non-trivial cohomology class in $H^2(\out ;M)$; this proves the first statement of Theorem~\ref{t:cohomology}.  For the second statement,  we consider the dual $M^*= H^1(F_n)$ of the 
standard module.

\begin{proposition} $H^2(\out ,M^*)=0$ for $n\geq 8$.
\end{proposition}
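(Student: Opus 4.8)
The plan is to reduce the assertion to the analogous vanishing for $\aut$ by a transgression argument --- this reduction is exactly where the difference between $M$ and $M^*$ makes itself felt --- and then to compute for $\aut$ by means of the extension $1\to\mathrm{IA}_n\to\aut\to\GL\to 1$, where $\mathrm{IA}_n$ denotes the kernel of the action on $H_1(F_n)$. For the extension $1\to\mathrm{Inn}(F_n)\to\aut\to\out\to 1$ with coefficients in $M^*$, the normal subgroup $\mathrm{Inn}(F_n)\cong F_n$ has cohomological dimension $1$ and acts trivially on $M^*$, so the Hochschild--Serre spectral sequence has only the two rows $q=0,1$ and degenerates to a Wang exact sequence, whose piece around degree $2$ contains $H^1(F_n;M^*)^{\out}\xrightarrow{\,d_2\,}H^2(\out,M^*)\to H^2(\aut,M^*)$, with $H^1(F_n;M^*)^{\out}=\mathrm{Hom}_{\out}(M,M^*)$. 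Since the $\out$-action on $M=H_1(F_n)$ factors through $\out\twoheadrightarrow\GL$, and $M$ and $M^*$ are non-isomorphic $\GL$-modules (equivalently, $\GL$ preserves no bilinear form on $\Z^n$ when $n\ge 2$), we get $\mathrm{Hom}_{\out}(M,M^*)=0$; hence $d_2=0$ and $H^2(\out,M^*)\hookrightarrow H^2(\aut,M^*)$, so it suffices to prove $H^2(\aut,M^*)=0$. By contrast, with coefficients in $M$ one has $\mathrm{Hom}_{\out}(M,M)=\Z\cdot\mathrm{id}$, and $d_2$ sends the abelianisation $F_n\to M$ to the class of the pushout of $1\to F_n\to\aut\to\out\to 1$ along $F_n\to F_n/F_n'$, i.e.\ to the non-zero class $[\aut/F_n']\in H^2(\out,M)$ of the previous section; this is precisely why $H^2(\out,M)\ne 0$ whereas $H^2(\out,M^*)$ can vanish.

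To prove $H^2(\aut,M^*)=0$ I would first note that $\aut$ is of type $\mathrm{FP}_\infty$ --- it acts cocompactly with finite stabilisers on the contractible spine of Auter space --- so $H^2(\aut,M^*)$ is finitely generated, and by the Bockstein sequences it vanishes once $H^2(\aut,M^*\otimes\Q)=0$ and $H^1(\aut,M^*\otimes\mathbb{F}_p)=0$ for every prime $p$. Both I would extract from the Hochschild--Serre spectral sequence of $1\to\mathrm{IA}_n\to\aut\to\GL\to 1$, in which $\mathrm{IA}_n$ acts trivially on $M^*$, using the theorem of Magnus, Cohen--Pakianathan, Farb and Kawazumi that $H_1(\mathrm{IA}_n)\cong\mathrm{Hom}(H_1(F_n),\wedge^2 H_1(F_n))$ to identify $H^1(\mathrm{IA}_n)$ as an explicit $\GL$-module. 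The degree-one statement mod $p$ reduces, via the same spectral sequence, to the vanishing of $H^1(\GL,\mathbb{F}_p^n)$ and of all $\GL$-equivariant maps $H_1(\mathrm{IA}_n)\otimes\mathbb{F}_p\to(\mathbb{F}_p^n)^*$; both hold once $n$ is sufficiently large (the first by known stability and vanishing results for the twisted cohomology of $\GL$, the second by a finite representation-theoretic computation). For $H^2(\aut,M^*\otimes\Q)$ the spectral sequence leaves three groups in total degree $2$: $H^2(\GL,M^*\otimes\Q)$, $H^1\big(\GL,\,H^1(\mathrm{IA}_n;\Q)\otimes M^*\otimes\Q\big)$ and $\mathrm{Hom}_{\GL}\big(M\otimes\Q,\,H^2(\mathrm{IA}_n;\Q)\big)$, and it suffices that each of these vanish. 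The first two do, in the stable range: by Borel's theorem the rational cohomology of $\GL$ in degrees $1$ and $2$ with coefficients in an algebraic representation is governed by $H^1(\GL,\Q)=H^2(\GL,\Q)=0$, and vanishes outright when the representation contains no trivial summand. These stability and vanishing statements, applied to the finitely many representations at hand, are what force $n$ to be large.

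The remaining group, $\mathrm{Hom}_{\GL}\big(M\otimes\Q,\,H^2(\mathrm{IA}_n;\Q)\big)$, is the heart of the matter: one must show that the standard representation does not occur as a summand of $H^2(\mathrm{IA}_n;\Q)$. This is the genuinely difficult point, since $H^2(\mathrm{IA}_n)$ is not known in general --- $\mathrm{IA}_n$ is not even known to be finitely presented for $n\ge 4$. I would try to handle it using Pettet's identification of the image of the cup product $\wedge^2 H^1(\mathrm{IA}_n;\Q)\to H^2(\mathrm{IA}_n;\Q)$ as a $\GL$-module, together with a further argument controlling the cokernel of this cup product for $n$ in the relevant range; alternatively, one can try to avoid $H^2(\mathrm{IA}_n)$ entirely by using twisted homological stability for $\aut$ (Hatcher--Vogtmann--Wahl) to replace $H^2(\aut,M^*\otimes\Q)$ by its stable value and then computing that stable group. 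I expect this last step --- determining the multiplicity of the standard representation in $H^2(\mathrm{IA}_n)$, equivalently the stable value of $H^2(\aut,M^*)$ --- to be where essentially all of the work lies. Granting it, all three $E_2$-terms vanish, so $H^2(\aut,M^*)=0$, and with the reduction of the first paragraph we obtain $H^2(\out,M^*)=0$ for $n$ in the asserted range.
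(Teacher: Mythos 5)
There is a genuine gap: your argument is a program whose decisive step is left unproven, and that step is at least as hard as the statement itself. Your first reduction is correct --- in the Hochschild--Serre sequence of $1\to F_n\to\aut\to\out\to 1$ with coefficients in $M^*$ the term $E_2^{0,1}=H^1(F_n;M^*)^{\out}={\rm Hom}_{\GL}(M,M^*)$ does vanish (there is no $\GL$-invariant bilinear form on $\Z^n$), so $H^2(\out,M^*)$ injects into $H^2(\aut,M^*)$. But everything after that hinges on showing that the standard representation does not occur in $H^2({\rm IA}_n;\Q)$ (equivalently, on knowing the stable value of $H^2(\aut,M^*)$), and you explicitly defer this: ``granting it'' is doing all the work. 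Since $H^2({\rm IA}_n)$ is not known as a $\GL$-module --- Pettet's result only identifies the image of the cup product, not the cokernel, and ${\rm IA}_n$ is not even known to be finitely presented for $n\ge 4$ --- the proposal does not constitute a proof. Several of the auxiliary vanishing claims (e.g.\ $H^1(\GL,\mathbb{F}_p^n)=0$, Borel vanishing in low degrees with these particular twisted coefficients) would also need precise references with explicit ranges, but those are repairable; the $H^2({\rm IA}_n)$ step is not, with the tools you invoke.

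The paper's actual argument avoids ${\rm IA}_n$ entirely and is where you should have looked for leverage: run the Hochschild--Serre spectral sequence of the \emph{same} extension $1\to F_n\to\aut\to\out\to 1$ but with \emph{trivial} $\Z$ coefficients. Then the two rows are $E_2^{p,0}=H^p(\out;\Z)$ and $E_2^{p,1}=H^p(\out;H^1(F_n))=H^p(\out;M^*)$ --- the twisted module you want appears automatically as $H^1$ of the fibre. Homological stability (Hatcher--Vogtmann, with the erratum) says $H^p(\out;\Z)\to H^p(\aut;\Z)$ is an isomorphism for $n\ge 2p+4$; since this map factors through the edge homomorphism, the differentials out of and into the bottom row must vanish and the bottom row must exhaust $E_\infty$, forcing $E_\infty^{p-1,1}=E_2^{p-1,1}=H^{p-1}(\out;M^*)=0$. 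Taking $p=2,3,4$ gives $H^2(\out,M^*)=0$ in the stated range. In short: the known \emph{untwisted} comparison between $\out$ and $\aut$ already encodes the vanishing of the \emph{twisted} group, with no need to compute anything about ${\rm IA}_n$.
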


\begin{proof}
To compute $H^2(\out ,M^*)$, we use the Hochschild-Lyndon-Serre  spectral sequence in cohomology for the short exact sequence
$$1\to F_n\to \aut \to \out \to 1,$$
with trivial $\Z$ coefficients.
This has $E_2^{p,q}=H^p(\out ;H^q(F_n)) \Rightarrow H^{p+q}(\aut )$.  Since $H^q(F_n)=0$ for $q>1$, the  $E_2$-term has exactly two non-zero rows, for $q=0$ and $q=1$:
\medskip

\,\,\begin{xy}
(0,0) ; (145,0) **\dir{-};
(0,35); (0,0) **\dir{-};
\xymatrix@C=1pc
{
&0 \ar[drr]^{d_2} & 0 \ar[drr]^{d_2} &0  &0\\
&H^0(\out ;M^*) \ar[drr]^{d_2} & H^1(\out ;M^*) \ar[drr]^{d_2} &H^2(\out ;M^*) & H^3(\out ;M^*)\\
&H^0(\out ;\Z)  & H^1(\out ;\Z)  &H^2(\out ;\Z)  & H^3(\out ;\Z)
}
\end{xy}

\medskip
The $E_2^{p,0}$ terms are $H^p(\out ;\Z)$ with trivial $\Z$-coefficients, and the $E_2^{p,1}$ terms are $H^p(\out ;M^*)$.   Now
$$E_\infty^{p,0}=E_3^{p,0}=H^p(\out ;\Z)/im(d_2).$$
Since the spectral sequence converges to the cohomology of $\aut $, we have a two-stage filtration
$$0\subset E_\infty^{p,0}\subset H^p(\aut ;\Z) \quad \hbox{with}\quad  E_\infty^{p-1,1}=H^{p-1}(\aut ;\Z)/E_\infty^{p,0}.$$
The map on cohomology  induced by  $\aut  \to \out $ factors through the edge homomorphism $e\colon E^{p,0}_\infty\to H^p(\aut ;\Z)$:
\begin{equation*}
\xymatrix@C=1pc@R=1pc
{H^p(\out ;\Z)\ar[rr]\ar@{->>}[dr] && H^p(\aut ;\Z)   \\
&H^p(\out ;\Z)/im(d_2)\ar@{^(->}[ur]^e  }
\end{equation*}

But the top arrow is an isomorphism for $n>\!>p$ (\cite{HV,HVW}), so in this range all of these maps are isomorphisms  and $d_2=0$.  Applying this with $p=2, 3$ and $4$ we see that  $E_3^{2,1}=E_\infty^{2,1}=H^2(\out ;M^*)$ must be zero.

The exact stable range for $H^p(\out )$ is still unknown.  A lower bound,
from \cite{HV}, is $n\geq 2p+4$, which gives $n\geq 12$ when $p\leq 4$.
\end{proof}

The form of the cohomology argument above may be abstracted as follows.

\begin{lemma} \label{l:zero}
Let $1\to F\to \G\overset{\pi}\to Q\to 1$ be a short exact sequence
with $F$ a free group, and let $M\cong H^1(F,\Z)$ be the associated
$\Z Q$-module. If $\pi$ induces agisomorphism $H^{p-1}(Q;\Z)
\to H^{p-1}(\G;\Z)$ and an injection $H^{p}(Q;\Z)
\to H^{p}(\G;\Z)$, then $H^{p-2}(Q;M)=0$.
\end{lemma}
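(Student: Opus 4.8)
The plan is to run the same Hochschild–Lyndon–Serre spectral sequence argument that was just carried out for $\out$ and $M^*$, but now in the abstract setting of an arbitrary extension $1\to F\to\G\overset{\pi}\to Q\to 1$ with $F$ free. Since $F$ is free, $H^q(F;\Z)=0$ for $q>1$, so the $E_2$-page has exactly two nonzero rows: $E_2^{p,0}=H^p(Q;\Z)$ with trivial coefficients, and $E_2^{p,1}=H^p(Q;M)$ where $M=H^1(F;\Z)$ carries its natural $\Z Q$-module structure. The only possibly nonzero differential is $d_2\colon E_2^{p-2,1}\to E_2^{p,0}$, and everything converges to $H^{*}(\G;\Z)$.

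First I would record the two-step filtration coming from convergence: for each $p$ there is a short exact sequence $0\to E_\infty^{p,0}\to H^p(\G;\Z)\to E_\infty^{p-1,1}\to 0$, with $E_\infty^{p,0}=E_3^{p,0}=H^p(Q;\Z)/\mathrm{im}(d_2)$ and $E_\infty^{p-2,1}=E_3^{p-2,1}=\ker\bigl(d_2\colon H^{p-2}(Q;M)\to H^p(Q;\Z)\bigr)$. Next, I would identify the composite $H^p(Q;\Z)\twoheadrightarrow E_\infty^{p,0}\hookrightarrow H^p(\G;\Z)$ with the inflation map $\pi^*$, as in the argument for Theorem B. The hypothesis that $\pi^*\colon H^p(Q;\Z)\to H^p(\G;\Z)$ is injective then forces the surjection $H^p(Q;\Z)\twoheadrightarrow H^p(Q;\Z)/\mathrm{im}(d_2)$ to be injective as well, i.e. $\mathrm{im}\bigl(d_2\colon H^{p-2}(Q;M)\to H^p(Q;\Z)\bigr)=0$.

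Now I would use the hypothesis on degree $p-1$. Since $d_2\colon H^{p-3}(Q;M)\to H^{p-1}(Q;\Z)$ lands in a group on which $\pi^*$ is \emph{injective}, the same reasoning gives $\mathrm{im}\bigl(d_2\colon H^{p-3}(Q;M)\to H^{p-1}(Q;\Z)\bigr)=0$; hence $E_\infty^{p-3,1}=H^{p-3}(Q;M)$ and, more to the point, $E_\infty^{p-1,0}=H^{p-1}(Q;\Z)$, so the filtration in degree $p-1$ reads $0\to H^{p-1}(Q;\Z)\overset{\pi^*}\to H^{p-1}(\G;\Z)\to E_\infty^{p-2,1}\to 0$. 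The assumption that $\pi^*$ is an isomorphism in degree $p-1$ then forces $E_\infty^{p-2,1}=0$. But $E_\infty^{p-2,1}=E_3^{p-2,1}=\ker\bigl(d_2\colon H^{p-2}(Q;M)\to H^p(Q;\Z)\bigr)$, and we showed above that this $d_2$ has zero image, hence is the zero map, so its kernel is all of $H^{p-2}(Q;M)$. Therefore $H^{p-2}(Q;M)=0$, as claimed.

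The only genuinely delicate point is the bookkeeping around which differentials need to vanish and the identification of the edge map with inflation; everything else is formal two-row spectral sequence manipulation. I expect the main obstacle — really just a matter of care rather than depth — to be making sure the degree-$(p-1)$ hypothesis is strong enough: we need $\pi^*$ to be an isomorphism (not merely injective) in degree $p-1$ precisely so that the quotient $E_\infty^{p-2,1}$ of $H^{p-1}(\G;\Z)$ by $\mathrm{im}(\pi^*)$ is forced to be trivial, which is what kills $H^{p-2}(Q;M)$. One should also note the convention $H^{p-2}=0$ when $p<2$ makes the statement vacuously fine in low degrees, so no separate edge cases are needed.
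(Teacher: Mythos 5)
Your argument is correct and is precisely the two-row Hochschild--Lyndon--Serre computation that the paper abstracts this lemma from (the paper gives no separate proof, presenting the lemma as a distillation of the argument for $H^2(\out;M^*)=0$). Your bookkeeping of the differentials, the identification of the edge map with inflation, and the use of injectivity in degree $p$ versus surjectivity in degree $p-1$ all match the intended proof.
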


\section{Theorem~\ref{t:nomaps}: Classification of  graphs realizing finite subgroups}  

In the course of this section and the next
 we shall prove that if $n$ is even and $n<m\le 2n$, or $n$ is odd and $n<m\le 2n-2$,
then every homomorphism $\out\to\outm$ has  image of order at most two.
We do this by examining the possible images in $\outm$ of the finite subgroups of $\out$. We show that the possible embeddings of the largest
finite subgroups of $\out$  are so constrained that none can be extended to a homomorphism defined on the whole of $\out$.
Arguing in this manner, we deduce that no homomorphism from $\out$ to $\outm$ can restrict to an injection on the largest finite subgroup $W_n\subset\out$. This enables us to
apply results from our previous work \cite{BV3}, in which we described the homomorphic images of $\out$ into which $W_n$ does not inject.

\subsection{Admissible graphs}

\begin{definition}
A graph is {\em{admissible}} if it is finite, connected, has no vertices of valence 1 or 2, and
has no non-trivial forests that are invariant under the full automorphism group of the graph.
An admissible graph on which a group $G$ acts is said to be {\em $G$-minimal} if the action is faithful
and there are no forests which are invariant under the $G$-action; thus every admissible graph is minimal for its full automorphism group.

\end{definition}

Note that an admissible graph can have no separating edges, so our
notion is more restrictive than the notion of admissible used in \cite{CV}.

The following theorem explains our interest in admissible graphs.

\begin{theorem}[\cite{Culler,Khramtsov}]\label{t:real} Every finite subgroup of $\Out$ can be realized as a subgroup of the automorphism group of an admissible graph with fundamental group $F_n$.
\end{theorem}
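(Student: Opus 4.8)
The plan is to reduce Theorem~\ref{t:real} to the classical realization theorem and then improve the graph it provides. The part of the statement that is genuinely new is the word \emph{admissible}: the assertion that every finite $G<\out$ is realized as a group of automorphisms of \emph{some} finite connected graph with fundamental group $F_n$ is exactly the theorem of Culler~\cite{Culler} and Khramtsov~\cite{Khramtsov} (it may also be deduced from the action of $\out$ on the spine of Culler--Vogtmann Outer space, whose point-stabilizers are precisely the finite groups of graph automorphisms). So the first step is to invoke this and fix a finite connected graph $\Gamma_0$ with $\pi_1\Gamma_0\cong F_n$ (we take $n\ge 2$) carrying a faithful $G$-action that induces the given inclusion $G\hookrightarrow\out$; the remaining task is to pass from $\Gamma_0$ to an \emph{admissible} graph without losing the faithful $G$-action.

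To do this I would take, among all finite connected graphs $\Gamma$ with $\pi_1\Gamma\cong F_n$ that admit a faithful $G$-action, one with the fewest edges; such a graph exists, since $\Gamma_0$ belongs to this family and every member has at least $n$ edges. I claim any such minimizer is admissible. The two clean-up moves in play are: collapsing a $G$-invariant forest, and smoothing all valence-two vertices simultaneously (replace each maximal arc whose interior vertices have valence two by a single edge). Both moves are equivariant for the full automorphism group of the graph; both are homotopy equivalences, so they preserve ``$\pi_1\cong F_n$'' and, because $G$ was assumed to embed in $\out$, the induced isomorphism of fundamental groups shows the $G$-action on the new graph is again faithful; and each strictly lowers the edge count whenever its trigger is present (a nontrivial forest contains an edge; a maximal arc with $j\ge 1$ interior valence-two vertices has $j+1$ edges and becomes one, the only degenerate case being a single circle, excluded since $n\ge 2$). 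Hence either feature in the minimizer would contradict minimality.

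Finally, I would note that this forces admissibility outright. If the minimizer $\Gamma$ had a vertex of valence one, the $G$-orbit of an edge ending at it would be a forest --- each edge of the orbit has a valence-one endpoint, so it cannot lie on an embedded circuit --- and collapsing it contradicts minimality; the same argument, applied to any nontrivial $\mathrm{Aut}(\Gamma)$-invariant forest (which is in particular $G$-invariant), shows $\Gamma$ has no such forest. If $\Gamma$ had a vertex of valence two, smoothing contradicts minimality. Thus $\Gamma$ has no vertices of valence one or two and no nontrivial invariant forest: it is admissible, $G$ acts faithfully on it, and the induced map to $\out$ is the original inclusion. The one genuinely deep input is the very first step, the existence of any realizing graph at all --- equivalently, that finite subgroups of $\out$ fix points of Outer space --- which is the Culler--Khramtsov theorem; everything after that is the routine bookkeeping that the two clean-up moves behave as claimed.
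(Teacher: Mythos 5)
Your argument is correct and is essentially the intended one: the paper offers no proof of Theorem~\ref{t:real} beyond the citation to Culler and Khramtsov, and the passage from their realization theorem to an \emph{admissible} realizing graph is exactly the equivariant edge-minimization/clean-up you describe (equivalently, one takes a fixed point of $G$ in the spine of Outer space, whose marked graphs are reduced). The only point to tighten is that the minimization should be taken over graphs realizing the \emph{given} conjugacy class of $G$ in $\out$, not over all graphs admitting some faithful $G$-action; since both clean-up moves are homotopy equivalences they stay within this subfamily, so the argument is unaffected.
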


An easy exercise using Euler characteristic yields:

\begin{lemma} An admissible graph of genus $m$ has at most $2m-2$ vertices and $3m-3$ edges.
\end{lemma}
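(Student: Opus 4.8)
The plan is to run the standard Euler-characteristic count, using only two inputs: that an admissible graph is connected with first Betti number equal to its genus $m$ (so that $\pi_1 \cong F_m$), and that an admissible graph has no vertex of valence $1$ or $2$.

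First I would fix notation, writing $V$ for the number of vertices and $E$ for the number of edges. Connectedness together with $b_1 = m$ gives $\chi = V - E = 1 - m$, hence $E = V + m - 1$. Next I would invoke the valence hypothesis: since each edge has two endpoints (a loop at a vertex being counted twice), the handshake identity reads $\sum_{v} \mathrm{val}(v) = 2E$, and every summand is at least $3$, so $3V \le 2E$. Combining this with the identity above yields $3V \le 2(V + m - 1)$, which rearranges to $V \le 2m - 2$. Substituting this bound back into $E = V + m - 1$ gives $E \le (2m-2) + (m-1) = 3m - 3$, as claimed.

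There is no genuine obstacle here — the lemma really is an exercise. The only points that warrant a moment's care are the normalization of the Euler characteristic (here ``genus'' means the rank of $\pi_1$, i.e. $\chi = 1-m$, rather than any surface-theoretic convention) and the bookkeeping that a loop contributes $2$ to the valence of its vertex, which is exactly the convention under which $\sum_v \mathrm{val}(v) = 2E$ holds; with those conventions in place the two displayed inequalities close the argument.
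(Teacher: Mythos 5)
Your argument is correct and is exactly the Euler-characteristic count the paper has in mind (the paper only remarks that the lemma is ``an easy exercise using Euler characteristic'' and omits the details). The two inequalities $3V\le 2E$ and $E=V+m-1$ close the argument just as you say.
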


The {\it genus} of a graph  $X$ is the rank of $H_1(X)$.  It can be computed as $e-v+c$, where $e$ is the number of edges of $X$, $v$ is the number of vertices and $c$ is the number of components.

\begin{lemma}\label{genus} A proper subgraph of an admissible graph has strictly smaller genus.   
\end{lemma}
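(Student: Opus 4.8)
The plan is to reduce the statement to two facts: the remark recorded just after the definition of admissibility, namely that an admissible graph has no separating edges, and the elementary principle that passing to a subgraph of a graph can never increase the genus.

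First I would record that monotonicity. If $Y\subseteq Z$ is an inclusion of graphs, then $\mathrm{genus}(Y)\le\mathrm{genus}(Z)$: build $Z$ up from $Y$ one cell at a time, and note that adjoining an isolated vertex raises both $v$ and the number of components $c$ by one (so $e-v+c$ is unchanged), while adjoining an edge either joins two components (raising $e$ and $c$ by one) or creates a new circuit within a component (raising only $e$); in every case $e-v+c$ does not decrease. Next, since $X$ is admissible it has minimum valence at least $3$ and hence no isolated vertices, so a proper subgraph $Y\subsetneq X$ must omit some edge $\epsilon$ of $X$. Let $X'$ be the graph obtained from $X$ by deleting the open edge $\epsilon$ while keeping its two endpoints. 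Then $Y\subseteq X'$, so $\mathrm{genus}(Y)\le\mathrm{genus}(X')$ by monotonicity, and it suffices to show $\mathrm{genus}(X')<\mathrm{genus}(X)$.

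For the last point, $X'$ has the same vertex set as $X$ and exactly one fewer edge, so by the formula $\mathrm{genus}=e-v+c$ one has $\mathrm{genus}(X')=\mathrm{genus}(X)-1$ if $X'$ is connected and $\mathrm{genus}(X')=\mathrm{genus}(X)$ otherwise. If $X'$ were disconnected then $\epsilon$ would be a separating edge of $X$; but the subgraph of $X$ spanned by all separating edges is a forest (an edge lying on a circuit of a connected graph cannot be separating, since any path through it can be rerouted around the circuit), and it is evidently invariant under the full automorphism group of $X$, so if it contained the edge $\epsilon$ it would be a non-trivial invariant forest, contradicting admissibility. Hence $X'$ is connected, $\mathrm{genus}(X')=\mathrm{genus}(X)-1$, and therefore $\mathrm{genus}(Y)\le\mathrm{genus}(X)-1<\mathrm{genus}(X)$. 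There is no serious obstacle in this argument; the only things to watch are the clean justification that admissibility forbids separating edges and the bookkeeping of the Euler-characteristic computation, which automatically absorbs the degenerate cases where $\epsilon$ is a loop or where $Y$ is obtained from $X'$ by further deleting (now isolated) vertices.
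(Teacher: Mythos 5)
Your proof is correct, and it is the argument the paper intends: the lemma is stated without proof, immediately after the observation that admissibility rules out separating edges, and your combination of that observation with the Euler-characteristic formula $e-v+c$ is exactly the expected route. One small bookkeeping slip: when an added edge joins two components, $c$ \emph{decreases} by one (it does not increase), so $e-v+c$ is unchanged in that case; the corrected accounting still gives the monotonicity you need, so the conclusion is unaffected.
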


\subsection{Classification of admissible $A_n$-graphs}

We are interested in finite subgroups  of $\outm$ that contain alternating groups, so we begin by classifying  admissible graphs of genus $m\leq 2n$ which {\em realize} the alternating group $A_n$, i.e. graphs which admit a faithful action of $A_n$ by isometries.

Two graphs which admit obvious $A_n$-actions are the $n$-cage $C_n$, which has two vertices and $n$ edges joining them, and the $n$-rose $R_n$ which has one vertex and $n$ loops   (see Figure~\ref{RoseCage}).  These will appear frequently in our discussion of $A_n$-graphs.  
\begin{figure}
\begin{center}
\includegraphics[width=2.5in]{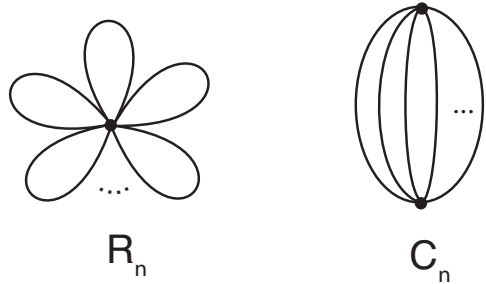}
\end{center}
\caption{An $n$-rose $R_n$ and and $n$-cage $C_n$}
\label{RoseCage}
\end{figure}

If $X$ is a graph with an $A_n$-action, we denote the orbit of a vertex $v$ by $[v]$.  In the next lemma we consider orbits of cardinality $n$.  We use the fact that the action of $A_n$ on a set of size $n$ is either trivial or standard, provided $n\neq 4$.  (For $n=4$, however,  $A_4$ has the Klein 4-group as a normal subgroup, with quotient $\Z/3$, which acts on four points by fixing one of them.)

\begin{lemma}\label{sizen} Suppose $n\geq 5$, and let $X$ be a  graph of genus $m < (n-1)(n-2)/2$ which realizes $A_n$.
If all vertex-orbits $[v]$ have size $n$, then $X$ is the disjoint union of $n$ subgraphs that are permuted by the action of $A_n$ in the standard way.
\end{lemma}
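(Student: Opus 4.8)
The plan is to exploit the numerical restriction $m < (n-1)(n-2)/2 = \binom{n-1}{2}$ to rule out any "coupling" between the $n$ subgraphs in different vertex-orbits, and then conclude that the $A_n$-action on $X$ must be a permutation of $n$ mutually disjoint pieces. First I would set up notation: since every vertex-orbit has size $n$ and $n\geq 5$, the $A_n$-action on each orbit is either trivial or the standard action on $n$ points; because the action of $A_n$ on $X$ is faithful (we may replace $X$ by a $G$-minimal model, or simply observe that if $A_n$ acted trivially on every vertex-orbit it would act trivially on the $1$-skeleton), at least one orbit carries the standard action. I would then fix a vertex $v$ in such an orbit and let $H = \mathrm{Stab}(v)$, which is $A_{n-1}$ (a point-stabilizer in the standard action). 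The key structural claim is that every vertex and every edge of $X$ is either fixed pointwise by $H$ or lies in the $A_n$-orbit of a cell whose $H$-orbit has size $\geq n-1$.

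The main obstacle — and the heart of the argument — is controlling the edges that could connect the various translates $gv$, $g\in A_n$, i.e. showing there is no edge joining $g_1 v$ to $g_2 v$ for $g_1 v\neq g_2 v$, and more generally no edge joining a cell in one orbit to a cell in another unless the whole configuration splits. Here is how I would extract the genus bound. Consider the subgroup $K\cong A_{n-2}$ fixing two chosen points in the standard orbit. Any $H$-orbit (or $K$-orbit) of an edge that is not fixed pointwise has size at least $n-1$ (resp. at least $\binom{n-1}{2}$, since the minimal faithful $A_{n-1}$-set beyond the trivial one has size $n-1$, and an $A_{n-2}$-set on which $A_{n-2}$ acts with a particular small image...); the cleanest route is: if some edge $e$ has both endpoints moved nontrivially so that its $A_n$-orbit "mixes" two standard-orbit vertices, then the orbit of $e$ together with the circuits it creates forces $\mathrm{rank}\,H_1 \geq \binom{n-1}{2}$, contradicting $m < \binom{n-1}{2}$. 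Concretely, I would count: a single $A_n$-orbit of edges realizing a nontrivial $A_{n-1}$-action on the edges incident to $v$ contributes at least $n(n-1)/2$ edges and raises the genus accordingly via Euler characteristic, $m = e - v + 1$, while the number of forced vertices grows only linearly in $n$; so such an orbit alone already violates the bound once $n\geq 5$. Therefore every edge incident to $v$ is fixed (setwise) by $H=\mathrm{Stab}(v)$, hence has its other endpoint also fixed by $H$, hence that endpoint lies in the $H$-fixed set.

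With this in hand the argument closes quickly. Let $X_1$ be the subgraph of $X$ spanned by all cells fixed pointwise by $H = \mathrm{Stab}(v) = A_{n-1}$ — equivalently, the "fiber over $v$". By the previous paragraph, every edge incident to a vertex of $X_1$ lies in $X_1$, so $X_1$ is a union of connected components of $X$. The group $A_n$ permutes the subgraphs $\{gX_1 : g\in A_n\}$, and since $H$ is the full stabilizer of the "piece" $X_1$ (any element fixing $X_1$ setwise fixes $v$, as $v$ is the unique vertex of its orbit lying in $X_1$ — here I use that the standard orbit meets $X_1$ in exactly one point), there are exactly $[A_n:H] = n$ such pieces, $X_1,\dots,X_n$. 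If two of them coincided or shared a vertex they would all coincide (transitivity), forcing $A_n$ to stabilize a single component with point-stabilizer $A_{n-1}$ acting with a global fixed point — impossible for the standard orbit unless $X_1 = X$, but then the standard orbit has size $1$, a contradiction. Hence $X = X_1 \sqcup \cdots \sqcup X_n$ with $A_n$ permuting the pieces in the standard way, which is the assertion. The one point needing care throughout is the $n=4$ caveat, but since we assume $n\geq 5$ the dichotomy "trivial or standard" for $A_n$-sets of size $n$ applies cleanly, and likewise the fact that the smallest nontrivial transitive $A_{n-1}$-set has size $n-1$ is exactly what drives the genus estimate.
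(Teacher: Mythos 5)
Your strategy is the same as the paper's: fix a vertex $v$ with stabilizer $H\cong A_{n-1}$, take the piece consisting of the cells fixed by $H$, and use the genus bound to show that no edge can leave this piece. However, the decisive numerical estimate is wrong as you state it, and the contradiction you claim does not follow. An $A_n$-orbit of ``at least $n(n-1)/2$'' edges supported on at most $2n$ vertices only forces genus at least $n(n-1)/2-2n+1=(n^2-5n+2)/2$, and this is strictly \emph{smaller} than the bound $(n-1)(n-2)/2=(n^2-3n+2)/2$ for every $n$ (for $n=5$ it gives genus $\ge 1$ against a bound of $6$). So ``such an orbit alone already violates the bound once $n\geq 5$'' is false as written, and with it falls the key claim that every edge incident to $v$ is $H$-invariant.

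The repair is to split into two cases according to whether the offending edge joins two vertices of the same orbit or of two different orbits; this is exactly what the paper does. If an edge joins $w_0$ to $u\in[w_0]$ with $u\neq w_0$, then transitivity of $\mathrm{Stab}(w_0)\cong A_{n-1}$ on $[w_0]\setminus\{w_0\}$ produces the complete graph $K_n$ on the $n$ vertices of $[w_0]$: here the vertex count is $n$, not $2n$, and the genus is exactly $\binom{n}{2}-n+1=(n-1)(n-2)/2>m$. If the edge joins $w_0$ to $u$ in a different orbit, then each of the $n$ vertices of $[w_0]$ has valence at least $n-1$ into $[u]$, and since such edges are counted once per $[w_0]$-endpoint this gives at least $n(n-1)$ edges (twice your count) on at most $2n$ vertices, hence genus at least $n^2-3n+1>(n-1)(n-2)/2$ for $n\geq 4$. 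Two smaller points: an orbit of size $n$ is by definition transitive, so the ``trivial or standard'' dichotomy is beside the point (every orbit carries the standard action, and no faithfulness argument is needed); and your closing disjointness argument (``they would all coincide by transitivity'') should be replaced by the observation that if $gX_1$ meets $X_1$ in a vertex then $g$ fixes that vertex (each orbit meets $X_1$ in exactly one point), whose stabilizer is $H$, so $g\in H$ and $gX_1=X_1$.
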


\begin{proof}
Since $n\geq 5$, the action of $A_n$ on each orbit $[v]$ is the standard permutation action. In particular, the stabilizer of each vertex $v$ is isomorphic to $A_{n-1}$, and acts transitively on the other vertices in $[v]$.  
Moreover these point stabilizers account for all of the subgroups of $A_n$ that are isomorphic to $A_{n-1}$.

 Fix a vertex $v_0$.  In each vertex-orbit $[w]$ there is a unique vertex $w_0\in [w]$ whose stabilizer is the same as that of $v_0$.  Let $X_0$ be the subgraph spanned by all of the $w_0$, including $v_0$. We claim that $X$ is the disjoint union of copies of $X_0$ permuted by the action of $A_n$.

 If a vertex $w_0\in X_0$ is connected to a vertex $u$ outside of $X_0$ by an edge, then the orbit of $u$ under the stabilizer of $w_0$ has $n-1$ elements, so $w_0$ has valence at least $n-1$;  similarly $u$ has valence at least $n-1$.  Let $X_1$ be the subgraph spanned by $[w]$ and $[u]$.  If $[u]=[w]$, then $X_1$ contains the complete graph  on $n$ vertices; but this graph has genus  $(n-1)(n-2)/2>m$, so this is impossible.  If $[u]\neq [w]$,  the genus of $X_{1}$ is at least $n(n-1)-2n+1$, but again this genus is strictly bigger than $m$ so this is impossible.  

It follows that $X$ is the disjoint union of $n$ copies of $X_0$, one for each $v\in [v_0]$, and that these are permuted by the action of $A_n$ in the standard way.
 
\end{proof}

If $X$ realizes $A_n$, then $A_n$ acts on the set of vertices and on the set of edges of $X$.  Our analysis of $A_n$-graphs depends on the following result of M. Liebeck.

\begin{proposition}[\cite{Liebeck}, Prop. 1.1]\label{p:Liebeck}   If $n>8$ then the orbits of the action of $A_n$ on a finite set $S$ have size $1$, $n$, $n\choose 2$ or larger.  If $n=7$ or $8$ there may also be an orbit of size $15$.  If $n=6$  there may be an orbit of size $10$.
\end{proposition}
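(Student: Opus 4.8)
The plan is to prove this by analysing, for an arbitrary proper subgroup $H<A_n$ of index at most $\binom{n}{2}$, the permutation action of $H$ on $\{1,\dots,n\}$; apart from one classical input --- Bochert's bound on the order of a primitive permutation group --- everything reduces to elementary estimates, so the argument is independent of the classification of finite simple groups. First I would pass to the transitive case: writing $S$ as a disjoint union of $A_n$-orbits, it suffices to bound the size of a single transitive $A_n$-set, and for $n\ge5$ the simplicity of $A_n$ forces every transitive $A_n$-set of size $>1$ to be faithful. Hence the sizes that occur are exactly $1$ together with the indices $[A_n:H]$ of proper subgroups $H<A_n$, and the statement becomes: for $n>8$ every proper $H<A_n$ has index $n$, index $\binom{n}{2}$, or index $>\binom{n}{2}$, with the additional small indices $10$ for $n=6$ and $15$ for $n=7,8$.

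Suppose first that $H$ acts intransitively. Then $H$ fixes (setwise) its smallest orbit, an $s$-subset with $1\le s\le n/2$, so $H\le (S_s\times S_{n-s})\cap A_n$, a subgroup of index $\binom{n}{s}$ in $A_n$. Since $\binom{n}{s}\ge\binom{n}{3}>\binom{n}{2}$ for $3\le s\le n/2$ and $n\ge6$, we must have $s\in\{1,2\}$. If $s=2$, then $[(S_{n-2}\times S_2)\cap A_n:H]=[A_n:H]/\binom{n}{2}\le1$, so $H$ is the stabilizer of an unordered pair, of index $\binom{n}{2}$. If $s=1$, then $H\le A_{n-1}$; but $A_{n-1}$ has no proper subgroup of index less than $n-1$ (crudely, a faithful action of $A_{n-1}$ on $d<n-1$ points is impossible since $|A_{n-1}|=(n-1)!/2>(n-2)!\ge d!$), whereas $[A_{n-1}:H]=[A_n:H]/n\le(n-1)/2<n-1$, forcing $H=A_{n-1}$, of index $n$.

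Now suppose $H$ acts transitively. If $H$ is imprimitive, preserving a block system with $b$ blocks of size $a$ (so $a,b\ge2$ and $ab=n$), then $H$ lies in the block-system stabilizer $(S_a\wr S_b)\cap A_n$, of order $(a!)^b\,b!/2$, so $[A_n:H]\ge n!/\bigl((a!)^b\,b!\bigr)$; an elementary estimate --- the worst case being $n$ even with two blocks of size $n/2$, where this lower bound is $\tfrac12\binom{n}{n/2}$ --- shows the index exceeds $\binom{n}{2}$ for every $n\ge7$, the single case that dips strictly below, namely $(S_3\wr S_2)\cap A_6$ of index $10$, accounting for the exception at $n=6$. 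If $H$ is primitive, Bochert's theorem (applicable since $H$ is a proper subgroup of $A_n$) gives $[S_n:H]\ge\lfloor(n+1)/2\rfloor!$, hence $[A_n:H]\ge\lfloor(n+1)/2\rfloor!/2$, which exceeds $\binom{n}{2}$ exactly when $\lfloor(n+1)/2\rfloor!>n(n-1)$ --- true for all $n\ge9$. For $n\in\{6,7,8\}$ Bochert's inequality is too weak, so I would consult the classical list of primitive groups of degrees $6,7,8$: the only primitive subgroups of index $\le\binom{n}{2}$ not already found are $\mathrm{PSL}(3,2)<A_7$ of order $168$ and $\mathrm{AGL}(3,2)\cong 2^{3}{:}\mathrm{PSL}(3,2)<A_8$ of order $1344$, each of index $15$. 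Combining the intransitive and transitive cases then yields the proposition, including the assertion that no orbit sizes other than those listed can occur.

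The main obstacle is the primitive case. It is the only point at which the proof needs something beyond counting: either Bochert's classical estimate, which keeps everything free of the classification of finite simple groups but leaves a bounded number of small degrees to be checked by hand against tables of primitive permutation groups, or, for a cleaner bound, the O'Nan--Scott theorem together with the classification-based estimates on the orders of almost simple primitive groups. It is also precisely where the exceptional orbit sizes $10$ and $15$ live, so some low-degree case analysis is unavoidable regardless of the route chosen; by contrast, the intransitive and imprimitive cases are routine manipulations of binomial coefficients and factorials.
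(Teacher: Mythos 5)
The paper does not actually prove this proposition --- it is quoted verbatim from Liebeck's paper (Prop.\ 1.1 of \cite{Liebeck}) and used as a black box --- so there is no internal proof to compare against. Your argument is correct and is essentially the standard route to such statements: reduce, via simplicity of $A_n$ and orbit--stabilizer, to bounding the index of proper subgroups $H<A_n$, then split into intransitive (set stabilizers, giving indices $n$ and $\binom{n}{2}$), transitive imprimitive (block stabilizers, all of index $>\binom{n}{2}$ for $n\ge 7$, with $(S_3\wr S_2)\cap A_6$ of index $10$ as the lone exception), and primitive (Bochert for $n\ge 9$, tables of small-degree primitive groups for $n=6,7,8$, producing $\mathrm{PSL}(3,2)$ and $\mathrm{AGL}(3,2)$ of index $15$). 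All the numerical claims check out: $\lfloor(n+1)/2\rfloor!>n(n-1)$ for $n\ge 9$, $\tfrac12\binom{n}{n/2}>\binom{n}{2}$ for even $n\ge 8$, and the $s=1,2$ intransitive cases do force $H=A_{n-1}$ or a pair stabilizer.

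Two small points worth tightening. First, in the imprimitive case you assert without proof that the two-block system minimizes the index $n!/((a!)^b b!)$; since you only need every factorization $ab=n$ to give index $>\binom{n}{2}$ for $n\ge 7$, it is cleaner either to verify this bound for each $(a,b)$ directly (e.g.\ via $n!/((a!)^b b!)=$ the number of partitions into $b$ blocks of size $a$, which is at least $\tfrac12\binom{n}{a}$) or to justify the monotonicity claim. Second, your statement that the only primitive subgroups of index $\le\binom{n}{2}$ ``not already found'' are $\mathrm{PSL}(3,2)<A_7$ and $\mathrm{AGL}(3,2)<A_8$ overlooks the transitive $\mathrm{PSL}(2,5)\cong A_5<A_6$ of index $6$; this is harmless, since $6=n$ is already an allowed orbit size, but it should be mentioned for completeness. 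Neither point affects the validity of the argument.
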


In the following proposition, the names of  graphs refer to Figure~\ref{Anfigure}. In each case, 
the automorphism group of the graph contains
a unique copy of $A_n$, up to conjugacy. 
(This can be seen by an elementary argument starting with the observation that in each case there is only a single possible non-trivial vertex orbit.)  Thus, for the most part, we need not specify how $A_n$ is acting each time such a graph appears.

 We use the following standard notation: if $X_1$ and $X_2$ are graphs, each with a distinguished vertex, then we write
$X_1\vee X_2$ for the graph obtained from the disjoint union $X_1\sqcup X_2$ by identifying these vertices;  if each of $X_1$ and $X_2$ is equipped with an action by a group $G$, we refer to the
induced action on $X_1\sqcup X_2$ and $X_1\vee X_2$ as the diagonal action.
\begin{figure}
\begin{center}
\includegraphics[width=5in]{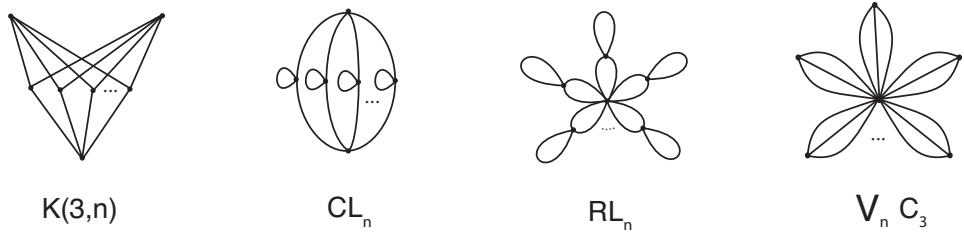}
\end{center}
\caption{Admissible graphs of rank $\leq 2n$ realizing $A_n$}
\label{Anfigure}
\end{figure}

\begin{proposition}[Classification of admissible $A_n$-graphs]\label{p:An} Suppose $n>8$, and let $X$ be an admissible graph of genus $m\leq 2n$ which realizes $A=A_n$.
 Let $X_A$ be the subgraph of $X$ spanned by edges with non-trivial $A$-orbits.  
\begin{enumerate}
\item If $m<n-1$ there are no admissible graphs realizing $A$.
\item If $n-1 \leq m<2n-2$ then $X_A=R_n$ or $C_n$.
\item If $m=2n-2$ then  $X_A$ is $R_n$, $C_n$,  $C_n\vee C_n$ or  $K(3,n)$.  
\item If $m=2n-1$ then $X_A$ is one of the above or $CL_n$, or  is   $C_{2n}$, $C_n\vee R_n$ or $C_n\sqcup C_n$ with diagonal action.  
\item If $m=2n$ then $X_A$  is one of the above,  $R_{2n}=R_{n}\vee R_n$ with diagonal action, $R_n \sqcup C_n$, $RL_n$ or $\bigvee_nC_3$.
\end{enumerate}
$X$ is obtained from $X_A$ by adding additional edges and vertices, fixed by $A$, in an arbitrary manner subject to the requirement that $X$
must be connected and must not contain a  non-trivial forest that is invariant under the action of ${\rm{Aut}}(X)$.
\end{proposition}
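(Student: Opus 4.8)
## Proof proposal for Proposition \ref{p:An}

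The plan is to analyze the $A_n$-action orbit by orbit, using Liebeck's Proposition \ref{p:Liebeck} to control orbit sizes together with the genus bound $m\le 2n$ from Lemma 4.6 (admissible graphs of genus $m$ have at most $3m-3$ edges). First I would observe that since $n>8$, every non-trivial edge-orbit has size $n$, $\binom{n}{2}$, or at least $2n$, and likewise for vertex-orbits; in particular an orbit of size $\binom{n}{2}$ already forces $m\ge\binom{n}{2}-$(constant), which exceeds $2n$ once $n>8$, so \emph{all} non-trivial orbits in $X_A$ have size exactly $n$. This immediately gives (1): a single edge-orbit of size $n$ contributes at least $n-1$ to the genus (it is a connected $n$-edge graph, or contributes $n$ if it closes up into loops), while if $X_A=\emptyset$ the action is not faithful; so $m\ge n-1$.

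Next I would enumerate the possible connected graphs with a faithful $A_n$-action all of whose non-trivial orbits have size $n$. The key structural input is Lemma \ref{sizen}: if every vertex-orbit has size $n$ then $X$ is a disjoint union of $n$ graphs permuted in the standard way — this is what produces the ``$\sqcup$'' cases and, after $\vee$-ing at a fixed vertex, the ``$\vee$'' cases. So I would split into cases according to the vertex-orbit structure of $X_A$: (a) $X_A$ has a single vertex with $n$ loops — this gives $R_n$, genus $n$; (b) $X_A$ has one vertex-orbit of size $n$ — the edges either join the orbit to a fixed vertex (giving the cage-type pieces: $C_n$ when there are two orbit-vertices identified... more precisely $C_n$ has genus $n-1$, and wedges/unions of two of these give $C_n\vee C_n$, $C_n\sqcup C_n$, $C_{2n}$, $K(3,n)$, $CL_n$ depending on how the $n$ parallel bundles are arranged), or join orbit-vertices to each other; the complete-graph and $K(n,n)$-type configurations are excluded by genus exactly as in the proof of Lemma \ref{sizen}; (c) two vertex-orbits of size $n$, forced by Lemma \ref{sizen} to be a standard disjoint union, which after possible wedging with $R_n$-type or $C_n$-type fixed material yields $R_n\sqcup C_n$, $R_n\vee R_n$, $RL_n$, $\bigvee_n C_3$. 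Throughout, I would compute the genus of each candidate and retain only those with $m\le 2n$, sorting them into the five ranges of the statement; the graphs $CL_n$ and $RL_n$ (``$C_n$/$R_n$ with a line'') and $\bigvee_n C_3$ are exactly the borderline cases at $m=2n-1$ and $m=2n$.

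Finally, for the last sentence: once $X_A$ is pinned down, the remaining edges and vertices of $X$ all lie in trivial $A$-orbits, i.e.\ are fixed pointwise by $A$, so $A$ acts on $X$ exactly as it acts on $X_A$ with an arbitrary fixed graph attached; the only constraints are the ones in the definition of admissible (connected, no valence-$1$ or $2$ vertices, no ${\rm Aut}(X)$-invariant forest), which is precisely the assertion. I would also remark, as the proposition does, that in each listed graph ${\rm Aut}(X_A)$ contains a \emph{unique} conjugacy class of $A_n$ because there is only one non-trivial vertex-orbit available, so no ambiguity about the action remains.

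The main obstacle I anticipate is the bookkeeping in case (b): classifying all ways that $n$ ``parallel bundles'' of edges can be distributed among a bounded number of fixed vertices and one size-$n$ vertex-orbit, while simultaneously enforcing both the genus ceiling $m\le 2n$ and admissibility (no valence-$2$ vertices — which kills, e.g., a subdivided cage — and no invariant forest). Getting a provably \emph{complete} list here, rather than just a list of examples, is the delicate point, and it is where the hypothesis $n>8$ is doing real work through Liebeck's theorem to rule out the sporadic medium-sized orbits that would otherwise have to be considered.
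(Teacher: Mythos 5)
Your toolkit (Liebeck's orbit-size restrictions, the vertex/edge-count bounds for admissible graphs, and Lemma~\ref{sizen}) is the right one, but your case decomposition is scrambled in a way that would prevent the enumeration from closing up. The correct organizing axis is the number of non-trivial \emph{vertex} orbits: since $X$ has at most $2m-2\le 4n-2$ vertices and every non-trivial vertex orbit has size $n$, there are $0$, $1$, $2$ or $3$ of them. The graphs $R_n$, $C_n$, $C_n\vee C_n$, $C_n\sqcup C_n$, $C_{2n}$, $R_n\vee C_n$, $R_n\sqcup C_n$ and $R_{2n}$ all belong to the case of \emph{zero} non-trivial vertex orbits (all vertices fixed, $A_n$ permuting only edges, so $X_A$ is a union of $n$-edge roses and cages), whereas you place most of them under ``one vertex-orbit of size $n$''; and $K(3,n)$, $CL_n$, $RL_n$ and $\bigvee_n C_3$ are exactly the graphs with \emph{one} non-trivial vertex orbit, obtained by applying Lemma~\ref{sizen} to the complement of the fixed vertices and then analyzing the local picture at a single vertex $v$ of $X_1$ (loop or no loop at $v$; terminal fixed vertices coincident or distinct, with the mixed case $u_1=u_2\neq u_3$ killed by an invariant forest). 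Your case (c), ``two vertex-orbits of size $n$,'' does not produce $R_n\sqcup C_n$, $R_{2n}$, $RL_n$ or $\bigvee_n C_3$: in fact two (and three) non-trivial vertex orbits must be shown to be \emph{impossible}, which requires its own counting argument (e.g.\ if $X_1$ is a single edge there must be at least four further edge-orbits meeting it, forcing a subgraph of genus at least $3n-3>2n$). As written, your argument would terminate believing case (c) contributes graphs to the list and would never carry out these exclusions.

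A second, smaller gap: your one-line derivation of part (1) asserts that an edge-orbit of size $n$ contributes at least $n-1$ to the genus, but an orbit of $n$ edges can span a forest (each of the three edge-orbits of $K(3,n)$ spans a star of genus $0$), so the bound does not follow orbit-by-orbit; in the paper, part (1) falls out of the completed case analysis rather than from a single-orbit estimate.
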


\begin{proof}
Since $X$ is admissible of genus $m\leq 2n$, it has at most $2m-2\leq 4n-2$ vertices, and since $n>8$ all vertex orbits have size $1$ or $n$. Therefore there are at most three non-trivial vertex orbits, and
we divide the classification into cases according to the number of these.

\medskip
{\bf Case 1: All vertices of $X$ are fixed.} In this case the subgraph $X_A$ is a union of cages and roses.  Since $n>8$ and the genus of $X$ is at most $2n<{n\choose 2}$, these cages and roses  must have exactly $n$ edges (Proposition~\ref{p:Liebeck}).  The genus of $X_A$ gives a lower bound on the genus of $X$, so  the genus of $X$ is at least $n-1$. If $n-1\leq m<2n-2$, the graph $X$   contains exactly one cage or one rose.  

If $m=2n-2$ then  the only new possibility is $X_A=C_n\vee C_n=X$, with diagonal action of $A_n$.  

If $m=2n-1$ we must consider the new possibilities $X_A=R_n\vee C_n$, $X_A=C_{2n}$ and $X_A=C_n\sqcup C_n$. If $X_A=  C_n\sqcup C_n$ then $X$ is obtained from $X_A$
by adding two extra (fixed) edges, which must both have the same endpoints, since otherwise the full group of isometries of $X$ would have an invariant forest and hence X would not be admissible.   If $X_A=C_n\vee C_n$ then
$X$ has one fixed edge and this cannot join the vertices of the $C_n$ which lie opposite the wedge vertex, for the same reason.  

If $m=2n$ the only new possibility for $X_A$ is $R_n\vee R_n=X$, where the action of $A_n$ is diagonal.   

\medskip
If there are non-trivial vertex orbits, consider the (invariant) subgraph of $X_A$ obtained by deleting all fixed vertices (and adjacent edges) of $X$.  By Lemma~\ref{sizen}, this subgraph is a disjoint union of  subgraphs $X_1,\ldots, X_n$   which are permuted by $A_n$.  Since $m\leq 2n$, each of these subgraphs  can have genus at most $1$.

\medskip
{\bf Case 2:  $X$ has one non-trivial vertex orbit.} In this case  $X_1$ has only a single vertex $v$, possibly with one loop attached.

If $v$ has a loop attached, there must be at least two other edges of $X$ adjacent to $v$ (since $X$ has no separating edges), and each of these edges has its other end at a fixed vertex.  If these vertices are  the same, then $X=X_A$ is  the ``rose with loops"  $RL_n$  which has genus $2n$.  If they are different, then $X$ contains the ``cage with loops" $CL_n$, so has genus  at least $2n-1$.  

If there is no loop at $v$, there must be at least three edges $e_1,e_2$ and $e_3$ of $X$ adjacent to $v$, terminating at fixed vertices $u_1, u_2$ and $u_3$.
If  these vertices all coincide,   then $e_1,e_2$ and $e_3$ form a 3-cage, whose $A_n$-orbit is a copy of $\bigvee_n C_3$ in $X$; since this has genus $2n$ it is in fact all of $X$.  If $u_1, u_2$ and $u_3$ are distinct, then the $A_n$-orbit of $e_1, e_2$ and $e_3$ forms a copy of $K(3,n)$, which has genus $2n-2$ and is all of $X_A$ since there is no room for another non-trivial edge orbit.
The case $u_1=u_2\neq u_3$ cannot occur, since the orbit of $u_3$ would be a forest invariant under the full isometry group of the graph.

\medskip
{\bf Case 3: $X$ has two non-trivial vertex orbits.}  In this case $X_1$ has two vertices $v$ and $w$, and we claim this can never give an admissible graph of rank $\leq 2n$.   If $X_{1}$ is a 2-cage $C_2$,  then  there must be another edge starting at $v$ and another edge at $w$, since $X$ is admissible.  These edges may terminate at the same or at different fixed points.  In either case, their orbits  form a forest invariant under the full isometry group of $X$.  Other possibilities for $X_{1}$  are eliminated by using the  fact that $X$ is admissible to count the minimal number of  orbits of edges terminating in $X_1$, then estimating the genus of the subgraph spanned by these edge-orbits;  in all cases, this genus is  bigger than $2n$.  For example, if $X_{1}$ is a single edge, there must be at least $4$ more edges adjacent to $X_{1}$,  and all must be in different edge-orbits since there are no orbits of size $2n$.  The subgraph spanned by the orbit of $X_1$ and these additional edge-orbits has $5n$ edges and at most $2n+4$ vertices, so its genus is at least $3n-3>2n$.  

\medskip
{\bf Case 4: $X$ has three non-trivial vertex orbits}. This case also cannot occur.  Let  $u, v$ and $w$ be the vertices of $X_1$.     In all cases, the fact that $X$ is admissible allows us to find a subgraph of  genus greater than $2n$.  For example, if $X_{1}$ is a triangle, there are at least  $3$ additional edges terminating in $X_1$.  The subgraph spanned by the orbits of $X_{1}$ and these additional edges has $6n$ edges and at most $3n+3$ vertices, so has genus at least $3n-2>2n$.   

\end{proof}

\def\o{\omega}
\def\O{\Omega}

\subsection{Classification of minimal admissible $W_n$-graphs }

Let $W_n\cong(\Z/2)^n\rtimes S_n$ be the full group of automorphisms of $R_n$.  If we identify $R_n$ with the standard rose with petals labelled by the generators of $F_n$, the subgroup $S_n$ is generated by permutations of the generators and the subgroup $(\Z/2)^n$ is generated by the automorphisms $\varepsilon_i$, where $\e_i$ inverts the $i$-th generator.   In this section  we classify all minimal admissible $W_n$-graphs $X$ of genus $m\leq 2n$.

\begin{lemma}\label{Snaction} Suppose $S_n$ acts on a finite set $\Omega$.  Then $S_n$  permutes the $A_n$-orbits in $\Omega$, and the action on this set of orbits factors through the determinant map $S_n\to S_n/A_n\cong \Z/2$.
\end{lemma}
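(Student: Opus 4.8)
The plan is to use only the facts that $A_n$ is a normal subgroup of $S_n$ and that $A_n = \ker(\det\colon S_n\to S_n/A_n\cong\Z/2)$. First I would verify that the set $\mathcal O$ of $A_n$-orbits in $\Omega$ is invariant under the $S_n$-action: given $g\in S_n$ and an $A_n$-orbit $O = A_n\cdot x$, normality of $A_n$ gives $g\cdot O = gA_nx = (gA_ng^{-1})(gx) = A_n\cdot(gx)$, so $g\cdot O$ is again an $A_n$-orbit (the one containing $gx$). Hence $S_n$ acts on $\mathcal O$ by permutations, which is the first assertion.

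Next I would observe that $A_n$ itself lies in the kernel of this action on $\mathcal O$: for $a\in A_n$ and $O = A_n\cdot x$ one has $a\cdot O = A_nx = O$, so every element of $A_n$ fixes every point of $\mathcal O$. Therefore the action homomorphism $S_n\to\mathrm{Sym}(\mathcal O)$ is trivial on $A_n$, and since $A_n$ is precisely the kernel of $\det$, this homomorphism factors through $\det\colon S_n\to S_n/A_n\cong\Z/2$. That is exactly the conclusion of the lemma.

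There is no real obstacle here: the entire content is the normality of $A_n$ in $S_n$ together with the two displayed identities above, and I would present the argument essentially verbatim. It may be worth remarking explicitly that the same reasoning shows that any group acting on a set permutes the orbits of a normal subgroup and acts trivially on that orbit set through the normal subgroup; the lemma is just the special case $A_n\trianglelefteq S_n$.
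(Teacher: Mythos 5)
Your proof is correct and follows exactly the paper's route: the paper's entire argument is the single identity $\sigma(A_n\omega)=A_n(\sigma\omega)$ coming from normality of $A_n$, with the factoring through $\det$ left implicit. Your write-up just makes the second step (that $A_n$ lies in the kernel of the action on the orbit set) explicit, which is a reasonable elaboration rather than a different method.
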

\begin{proof} For all $\sigma\in S_n$ and $\o\in \O$ we have $\sigma(A_n \o)=A_n(\sigma \o)$ since $A_n$ is normal. 
\end{proof}

\begin{lemma}\label{Wnaction} Suppose $W_n$ acts on a finite set $\O$, and $A_n$ has a single non-trivial orbit $\O_A$ of size $n$.  Then $\O_A$ is invariant under the full group $W_n$.  Each $\varepsilon_i$ acts as the identity on $\O_A$,  all the $\varepsilon_i$ act by the same involution\footnote{for brevity, we use the term ``involution" to mean a symmetry that either has order 2 or is the identity}  on the fixed set $\O^A$ of the $A_n$-action, and every transposition in $S_n$ acts by the same involution on $\O^A$.   
\end{lemma}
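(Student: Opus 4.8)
The plan is to write $W_n=\langle S_n,\varepsilon_1,\dots,\varepsilon_n\rangle$ and control each type of generator using three facts: $A_n\triangleleft S_n$, the relations $\sigma\varepsilon_i\sigma^{-1}=\varepsilon_{\sigma(i)}$, and (crucially) that $(\Z/2)^n$ is abelian. Write $\O=\O_A\sqcup\O^A$, where $\O^A$ is the fixed set of $A_n$. Since $n>8$ the transitive $A_n$-action on $\O_A$ is the standard one, so I may label $\O_A=\{x_1,\dots,x_n\}$ with $a\cdot x_k=x_{a(k)}$ for $a\in A_n$. First I record two easy consequences of Lemma~\ref{Snaction} and simplicity: $\O_A$ is $S_n$-invariant (it is the only non-trivial $A_n$-orbit, and $S_n$ permutes $A_n$-orbits preserving size), and $S_n$ in fact acts on $\O_A$ in the standard way, since a homomorphism $S_n\to\mathrm{Sym}(\O_A)$ whose restriction to $A_n$ is the standard embedding is injective (its kernel is a normal subgroup of $S_n$ meeting $A_n$ trivially) and is then standard because $C_{S_n}(A_n)=1$. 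In particular the stabilizer $S_{n-1}$ of a vertex of $\O_A$ acts standardly on the remaining $n-1$ vertices, so the only permutation of those vertices commuting with this $S_{n-1}$-action is the identity.

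The heart of the argument is to prove that each $\varepsilon_i$ fixes $\O_A$ pointwise; combined with $S_n$-invariance this also gives the $W_n$-invariance of $\O_A$, so statements (1) and (2) follow together. Fix $i$. Since $\varepsilon_i$ commutes with $S_n^{(i)}\cong S_{n-1}$, hence with $A_n^{(i)}\cong A_{n-1}$, and since $Y_i:=\{x_k:k\ne i\}$ is the unique $A_n^{(i)}$-orbit of size $>1$ in $\O$, the permutation $\varepsilon_i$ must send $Y_i$ to itself; as $\varepsilon_i|_{Y_i}$ then commutes with the standard $S_{n-1}$-action on $Y_i$, it is the identity on $Y_i$. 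Thus $\varepsilon_i$ permutes the complement $\{x_i\}\cup\O^A$ of $Y_i$, and the only alternative to $\varepsilon_i x_i=x_i$ is $\varepsilon_i x_i=z$ for some $z\in\O^A$. Ruling this out is the one step that needs a genuine idea: conjugating by a $3$-cycle $a\in A_n$ with $a(i)=j$ gives $\varepsilon_j=a\varepsilon_i a^{-1}$, hence $\varepsilon_j x_j=a\varepsilon_i a^{-1}x_j=a\varepsilon_i x_i=az=z$ for \emph{every} $j$, so $\varepsilon_j z=x_j$ for every $j$. Now take any $j\ne i$; using that $\varepsilon_i,\varepsilon_j$ commute and that $\varepsilon_i$ fixes $Y_i$ and $\varepsilon_j$ fixes $Y_j$ pointwise (established above, for every index), we get $x_j=\varepsilon_i x_j=\varepsilon_i\varepsilon_j z=\varepsilon_j\varepsilon_i z=\varepsilon_j x_i=x_i$, a contradiction. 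Therefore $\varepsilon_i x_i=x_i$, so $\varepsilon_i$ fixes $\O_A=Y_i\cup\{x_i\}$ pointwise.

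Part (3) is then short: each $\varepsilon_i$ preserves $\O^A$ (it is a bijection of $\O$ fixing $\O_A$ pointwise) and acts there as an involution because $\varepsilon_i^2=1$; and for $i\ne j$, choosing a $3$-cycle $a\in A_n$ with $a(i)=j$ we have $\varepsilon_j=a\varepsilon_i a^{-1}$, and since $a\in A_n$ acts as the identity on $\O^A$, restricting this equation to $\O^A$ yields $\varepsilon_j|_{\O^A}=\varepsilon_i|_{\O^A}$. Part (4) is immediate from Lemma~\ref{Snaction}: the $S_n$-action permutes the singleton $A_n$-orbits that make up $\O^A$ through $\det\colon S_n\to\Z/2$, so all transpositions induce the same involution of $\O^A$. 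The only place where largeness of $n$ is used is to identify the $A_n$-action on $\O_A$ (and the $A_{n-1}$-action on $Y_i$) with the standard actions; everything else is formal, and the delicate point I expect to need care is exactly the exclusion of $\varepsilon_i x_i\in\O^A$.
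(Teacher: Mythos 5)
Your proposal is correct and follows essentially the same strategy as the paper's proof: both rest on Lemma~\ref{Snaction} for the $S_n$-part, on the relations $\sigma\varepsilon_i\sigma^{-1}=\varepsilon_{\sigma(i)}$ together with the commutativity of the $\varepsilon_i$, and on the fact that $\varepsilon_i$ centralizes a copy of $A_{n-1}$ whose unique non-trivial orbit is $\O_A\setminus\{x_i\}$. The only difference is one of ordering: you first show $\varepsilon_i$ is the identity on $Y_i$ (via the trivial centralizer of the transitive $A_{n-1}$-action) and then exclude $\varepsilon_i x_i\in\O^A$, whereas the paper performs these two exclusions in the reverse order.
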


\begin{proof}   By Lemma~\ref{Snaction}  the action of $S_n< W_n$ preserves $\O^A$ and $\O_A$, and all permutations of determinant $-1$ act by the same involution of $\O^A$.  Thus it only remains to check the action of  the $\varepsilon_i$.  

Let $\o_1,\ldots, \o_n$ be the elements of $\O_A$, with the standard $A_n$ action on the subscripts.  The centralizer of $\varepsilon_1$ contains a copy of $A_{n-1}$, so $\varepsilon_1$ acts on the fixed point set of this $A_{n-1}$, which is  $\O^A\cup \omega_1$.    Assume that $\varepsilon_1$ sends   $\o_1$  to $t\in \O^A$; we will show that this leads to   a contradiction.  Set $\sigma=(12)(ij)$ for some $i\neq j>2$. Then $\sigma\in A_n$ and $\varepsilon_2=\sigma\varepsilon_1\sigma$.  Applying this to $\o_2$ shows that $\varepsilon_2(\o_2)=t$.  Thus $\varepsilon_1\varepsilon_2(\o_2)=\varepsilon_1(t)=\o_1$.  Since $\varepsilon_1$ and $\varepsilon_2$ commute,   this gives $\varepsilon_2\varepsilon_1(\o_2)=\o_1$, i.e. $\varepsilon_1(\o_2)=\varepsilon_2(\o_1)$, which implies that  $\varepsilon_1(\o_2)\neq \o_1, \o_2$ or $t$.
Thus $\varepsilon_1(\o_2)=\o_i$ for some $i>2$.  Now $\varepsilon_1\varepsilon_2=\varepsilon_1\sigma\varepsilon_1\sigma$ and $\varepsilon_2\varepsilon_1=\sigma\varepsilon_1\sigma\varepsilon_1$; applying the first expression to $\o_2$ gives $\o_1$, but the second expression sends $\o_2$ to $\sigma\varepsilon_1(\o_i)\neq \o_1$, giving   the desired contradiction.  We conclude that $\varepsilon_1(\o_1)=\o_1$ and $\varepsilon_1(\o_i)\in \{\o_2,\ldots, \o_n\}$ for all $i>1$, i.e. $\varepsilon_1$ preserves $\O_A$ and $\O^A$.  

In fact, we must have $\varepsilon_1(\o_i)=\o_i$ for all $i$. To see this, suppose, e.g., that $\varepsilon_1(\o_2)=\o_3$.  Then
 \begin{align*}
\o_3=\varepsilon_1(\o_2)=\varepsilon_1\varepsilon_2(\o_2)&=\varepsilon_2\varepsilon_1(\o_2) \\ =\varepsilon_2(\o_3)
&=(12)\varepsilon_1(12)(\o_3) = (12)\varepsilon_1(\o_3)=(12) (\o_2)=\o_1,
 \end{align*}
 giving a contradiction.  

Since all $\varepsilon_i$ are conjugate by elements of $A_n$, they all act in the same way on $\O^A$.   
\end{proof}

Now let $X$ be a minimal admissible $W_n$ graph.  As in the previous section, we denote by $X^A$ the subgraph fixed by the $A_n$-action  and by $X_A$ the subgraph of $X$ spanned by edges in non-trivial $A_n$-orbits.  
Note that $X=X^A\cup X_A$.

\begin{notation} Let $\Delta = \e_1\dots\e_n\in W_n$ and let  $\alpha : W_n\to W_n$ be the homomorphism that  is the identity on $S_n<W_n$ and sends each $\e_i$ to $\e_i\Delta$.
\end{notation}
Note that $\alpha$ is an automorphism if $n$ is even but has kernel $\<\Delta\>$ if $n$ is odd.
\smallskip

In light of Theorem \ref{t:real}, the following proposition
provides a complete description, up to conjugacy, of the
subgroups of $\outm$ isomorphic to $W_n$ with $n>8$ and $m\le 2n$.

 \begin{proposition}\label{p:Wn} Suppose $n>8$ and let
$X$ be a $W_n$-minimal, admissible graph of genus $m\leq 2n$.    Then  $X_A$ is invariant under the whole group $W_n$, and
 all of the  $\varepsilon_i$ have the same
restriction to $X^A$. The possibilities for $X_A$ are:
\begin{enumerate}
\item If $m\leq 2n-2$  then $X_A=R_n$ and the action of $W_n$ on $R_n$ is either the standard one or else the standard one twisted by $\alpha:W_n\to W_n$.
\item  If $m=2n-1$, the only additional possibilities for  $X_A$ are $C_{2n}$, $R_n\vee C_n$,  and $CL_n$. In all cases, $X=X_A$. In the action on $C_{2n}$, the edges are grouped in
pairs $\{e_i,e_i'\}$ so that the action of $\sigma\in S_n$ sends $e_i$ to $e_{\sigma(i)}$ and $e_i'$ to $e_{\sigma(i)}'$, and either the action of $\e_i$ is standard (i.e. it exchanges $e_i$ and $e_i'$ only)
or else it is the standard action twisted by  $\alpha:W_n\to W_n$. In addition, the $\e_i$ and the transpositions in $S_n$ may exchange the vertices of
$C_{2n}$. The action of $W_n$ on $R_n\subset R_n\vee C_n$ is as in (1) and the $\e_i$ act trivially on $C_n\subset R_n\vee C_n$.
In a standard action of $W_n$ on $CL_n$, each $\e_i$ flips the $i$th loop and leaves all others other fixed, and $W_n$ interchanges the vertices of $C_{2n}$
via a non-trivial homomorphism $W_n\to \Z/2$; any action of $W_n$
on $CL_n$ is either standard or
else a standard one twisted by $\alpha:W_n\to W_n$.
\item If $m=2n,$ the only additional possibilities for $X_A$ graphs are $R_{2n}=R_n\vee R_n'$, $RL_n$ and $R_n\sqcup C_n$. In the first two cases $X=X_A$ and in the last case $X$
is obtained by connecting $R_n$ to $C_n$ with two edges that have the same endpoints. The action of $W_n$ on
each factor of $R_{2n}=R_n\vee R_n'$ will be as described in (1), except that on at most one factor the $\e_i$ might act trivially. In the action of $W_n$ on $CL_n$, either each $\e_i$
is supported on the $i$th figure-8 graph in the wedge, or else the action is obtained from one with this property by twisting with $\alpha:W_n\to W_n$.
\end{enumerate}
\end{proposition}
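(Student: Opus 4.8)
The plan is to bootstrap from the classification of admissible $A_n$-graphs in Proposition~\ref{p:An}, adding at each stage the constraints imposed by the remaining generators $\e_1,\dots,\e_n$ of $W_n$ and by the requirement that $W_n$ — not merely $A_n$ — act faithfully.

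First I would dispatch the two structural assertions. Since $A_n\trianglelefteq W_n$, the group $W_n$ permutes the $A_n$-orbits of edges and preserves their sizes; as $n>8$, the only non-trivial orbit size occurring in an admissible graph of genus $\le 2n$ is $n$ (Proposition~\ref{p:Liebeck} together with an Euler-characteristic count, as in the proof of Proposition~\ref{p:An}). So $W_n$ sends edges with non-trivial $A_n$-orbit to edges with non-trivial $A_n$-orbit, whence $X_A$ — and so also $X^A$ — is $W_n$-invariant. For the second assertion, given $i\neq j$ choose $\sigma\in A_n$ with $\sigma(i)=j$; then $\e_j=\sigma\e_i\sigma^{-1}$ in $W_n$, and as $A_n$ fixes $X^A$ pointwise, $\e_i$ and $\e_j$ restrict to the same map of $X^A$ (an involution, since $\e_i^2=1$). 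The same argument shows all transpositions of $S_n$ restrict to a single common involution of $X^A$; I would use this repeatedly.

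By Proposition~\ref{p:An}, $X_A$ is one of the graphs listed there, and $X$ is obtained from $X_A$ by attaching $A_n$-fixed edges and vertices. The next step is to discard the cases with no ``rose part''. On an $n$-cage, on a $3$-cage inside $\bigvee_nC_3$, or on a $K(3,n)$, the only non-$A_n$-fixed vertices form a single orbit of size $n$, and the only thing an $\e_i$ can do to the incident edges is reverse them or permute the (at most three) $A_n$-fixed vertices adjacent to a given moved vertex; combining $\e_i^2=1$, the mutual commutativity of the $\e_i$ and their common restriction to $X^A$, one finds that $(\Z/2)^n$ acts on such an $X_A$ through a quotient that embeds in $S_3$ (possibly trivially), hence \emph{not} faithfully. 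As the genus bound $m\le 2n$ leaves no room to attach a further non-trivial edge-orbit, $(\Z/2)^n$ would then act trivially on all of $X$, contradicting $W_n$-minimality. This eliminates $X_A\in\{C_n,\ C_n\vee C_n,\ K(3,n),\ C_n\sqcup C_n,\ \bigvee_nC_3\}$, and in particular forces $X_A=R_n$ when $m\le 2n-2$.

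What remains — the heart of the matter — is to pin down the faithful $W_n$-actions on the survivors $R_n$, $C_{2n}$, $R_n\vee C_n$, $CL_n$, $RL_n$, $R_{2n}$ and $R_n\sqcup C_n$. The basic rigidity statement to establish is: if $W_n$ acts on a graph with a $W_n$-orbit of edges consisting of $n$ loops at a common vertex, permuted by $S_n$ in the standard way, then on these loops $\e_i$ either reverses exactly the $i$-th loop (``standard'') or reverses exactly the loops other than the $i$-th one (``standard twisted by $\alpha$''). The proof imitates Lemma~\ref{Wnaction}: $\e_i$ commutes with ${\rm Stab}_{A_n}(i)\cong A_{n-1}$, which fixes the $i$-th loop and permutes the remaining $n-1$ transitively, so $\e_i$ reverses a ${\rm Stab}_{A_n}(i)$-invariant set of loops — the $i$-th or not, together with all the others or none; the transposition $(ij)\in S_n$ conjugates $\e_i$ to $\e_j$ and forces this choice to be uniform in $i$; and faithfulness of $(\Z/2)^n$ rules out the two ``constant'' choices (reverse no loop, reverse every loop), leaving exactly the standard action and its $\alpha$-twist. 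Feeding this back — together with the analogous analyses for $C_{2n}$ and $CL_n$, where $\e_i$ commutes with ${\rm Stab}_{A_n}(i)$ and so preserves and either fixes or swaps the paired loops/edges at the $i$-th vertex, uniformly in $i$ — gives the actions listed in (1)--(3); the ways $S_n$ and the $\e_i$ may interchange the two vertices of $C_{2n}$ or the two non-$A_n$ vertices of $CL_n$, and the requirement in the $R_n\sqcup C_n$ case that the two connecting $A_n$-fixed edges share their endpoints (otherwise ${\rm Aut}(X)$ acquires an invariant forest), are read off exactly as in the proof of Proposition~\ref{p:An}. The main obstacle is the interplay of this rigidity with faithfulness of the \emph{whole} $W_n$-action: when $n$ is even, $\alpha$ is an automorphism, so both the standard action and its twist are faithful on a single $R_n$; when $n$ is odd, $\Delta\in\ker\alpha$ acts trivially on a single $R_n$, so the twisted option can only occur alongside an extra $A_n$-fixed subgraph on which $\Delta$ acts non-trivially, or alongside a second rose factor as in $R_{2n}$. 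Keeping careful track of precisely when each listed action occurs is where the argument demands the most care.
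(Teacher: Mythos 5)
Your overall strategy is the paper's: run through the list of admissible $A_n$-graphs from Proposition~\ref{p:An} and, case by case, use the conjugacy relations among the $\e_i$ and the faithfulness/minimality of the $W_n$-action to pin down or exclude each graph. But the foundation of your argument contains a genuine error: $A_n$ is \emph{not} normal in $W_n=(\Z/2)^n\rtimes S_n$. Indeed $\e_1(123)\e_1^{-1}=\e_1\e_{2}\,(123)\notin A_n$; only $S_n$ normalizes the copy of $A_n$, the $\e_i$ do not. Consequently your derivations that $W_n$ permutes the $A_n$-orbits, that $X_A$ and $X^A$ are $W_n$-invariant, and that all $\e_i$ have a common restriction to $X^A$ (which needs $\e_i(X^A)\subseteq X^A$ before the conjugation trick can be applied) do not follow as stated. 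These invariance facts are precisely the content of Lemma~\ref{Wnaction}, whose proof is the delicate hands-on analysis with the centralizer of $\e_1$ exactly because normality fails; your later ``rigidity statement'' also silently presupposes them (it assumes $\e_i$ carries the $n$ distinguished loops to themselves without permuting them or swapping one into $X^A$). The repair is to invoke Lemma~\ref{Wnaction} — applied to the edge set, the vertex set, or suitable sets of edge-pairs, as the situation requires — rather than normality of $A_n$ in $W_n$.

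A second, independent gap is your elimination of $X_A=\bigvee_nC_3$: it is not true that $(\Z/2)^n$ can only act on this graph through a quotient embedding in $S_3$. For example, letting $\e_i$ transpose two of the three parallel edges of the $i$-th $3$-cage and fix everything else satisfies all the defining relations of $(\Z/2)^n\rtimes S_n$ and gives a \emph{faithful} action of $W_n$ on $\bigvee_nC_3$ (for $n$ even; for $n$ odd the kernel is at most $\langle\Delta\rangle$, still far from forcing triviality of $(\Z/2)^n$). This graph must instead be excluded because every $W_n$-action on it leaves invariant some $A_n$-orbit of edges, which is a forest, so the action is never $W_n$-minimal — the argument the paper gives. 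The remainder of your outline (the dichotomy ``flip only the $i$-th petal or all but the $i$-th,'' the elimination of the cage-only cases via $\e_1\e_2$ acting trivially, the order count for $K(3,n)$, and the bookkeeping of when the $\alpha$-twisted actions are faithful) matches the paper's proof once the Lemma~\ref{Wnaction} input is restored.
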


\begin{proof} We divide the proof into cases according to the classification in Proposition~\ref{p:An} of $A_n$-graphs.

{\bf Case 1:  $X_A=R_n$ or $C_n$}.  In this case we can apply Lemma~\ref{Wnaction} to the action of $W_n$ on the set of (unoriented) edges of $X$ to conclude that  $X_A$ is invariant and that each $\varepsilon_i$ acts as the identity on the set of edges of $X_A$, and as a fixed involution $\tau$ on $X^A$.  If $X_A=C_n$ and $\varepsilon_1$ inverts an edge, then it must interchange the vertices of $C_n$ and thus invert all of the edges; furthermore, since the $\varepsilon_i$ are all conjugate by the action of $A_n$, they must all do this. 
Thus, regardless of whether the $\varepsilon_i$ invert the edges of $C_n$ or fix them,
 $\varepsilon_1\varepsilon_2$ acts as the identity on $X$, so $X$ does not realize $W_n$ -- a contradiction.
  We conclude that $X_A=R_n$, and we
label the edges so that the action is standard. Each $\varepsilon_i$ acts by flipping some of the petals.  Since all $\varepsilon_i$ are conjugate by elements of $A_n$, they all flip the same number of petals.  If $\varepsilon_i$ flips $a_j$ for some $j\neq i$, it must flip all $a_j$ for $j\neq i$, because  $\varepsilon_i$ commutes with a copy of $A_{n-1}$ which acts transitively on these $a_j$.  It can't flip all (or none) of the petals, since then $\varepsilon_i\varepsilon_j$ would act as the identity.   Therefore $\varepsilon_i$ must flip $e_i$ alone, or else all edges except $e_i$.   

If $m< 2n-2$ this takes care of all possibilities for $X_A$, by Proposition~\ref{p:An}.

{\bf Case 2: $X_A=K(3,n)$.} The full group of isometries of $K(3,n)$ is isomorphic to $S_3\times S_n$, which has order only $6n!$.  This is less than the order of $W_n$, so $K(3,n)$ cannot realize $W_n$.  Adding one or two extra edges to $K(3,n)$ can only reduce the size of the isometry group, so in fact  $X_A$ cannot be isomorphic to $K(3,n)$ for any $m\leq 2n$.

{\bf Case 3:  $X_A=C_n\vee C_n$.}  
Write $X_A=C_n\vee C_n'$ where $C_n'$ is another copy of $C_n$, and $A_n$ acts diagonally.  Applying Lemma~\ref{Wnaction} to the set $\Omega$ consisting of corresponding pairs $\{e_i,e_i'\}$ of edges in $X_A$ and single edges $\{f_i\}\in X^A$,
we conclude that $\varepsilon_i$ either fixes  all $e_j$ or interchanges  each pair $\{e_j,e_j'\}$. But we know that all $\varepsilon_i$ act by the same involution on $X^A$, so this would imply that $\varepsilon_1\varepsilon_2$ acts as the identity on $X=X_A\cup X^A$, contradicting the assumption that the action of $W_n$ is minimal, hence faithful.

{\bf Case 4:  $X_A=C_n\sqcup  C_n$.}  
This cannot be a minimal $W_n$-graph; the proof is identical to Case 3.

{\bf Case 5:  $X_A=\bigvee C_3$.}  Since no edge of $X_A$ can be inverted by an isometry,  $W_n$ acts on the set of $A_n$-orbits of edges. Since $A_n$ acts trivially on this set, the action factors through $W_n/A_n\cong \Z/2\times  \Z/2$.  But any action of $  \Z/2\times  \Z/2$ on a set of $3$ elements has a fixed element.  This means that some $A_n$-orbit is invariant under $W_n$, so $X$ has an invariant forest and is not minimal for $W_n$.

All other cases support a $W_n$ action.  Specifically, we have:

{\bf Case 6: $X_A=R_n\vee C_n$ or $R_n\sqcup C_n$}.   Here, $R_n$ and $C_n$ are each invariant under the full isometry group of $X_A$.  Apply Lemma~\ref{Wnaction} separately to the set of edges in $R_n$ and in $C_n$ to conclude that   $\varepsilon_i$ acts as in Case 1 on $R_n$ and trivially on $C_n$.

{\bf Case 7:  $X_A=C_{2n}$.}   If we write $C_{2n}=C_n\cup C_n'$ with diagonal $A_n$-action, then  Lemma~\ref{Wnaction}  applied to the set $\O$ of corresponding pairs $\{e_i,e_i'\}$ shows that
each $ \varepsilon_j$ acts trivially on the set of such pairs. Arguing as in case 1, we see that $\e_i$ interchanges only $e_i$ and $e_i'$, or else interchanges $e_j$ and $e_j'$ for all $j$ except $j=i$.
In addition, all of the $\e_i$ interchange the vertices of $C_{2n}$, or else fix them.

{\bf Case 8:  $X_A=CL_n$.}  Arguing as in case 1, we see that $\varepsilon_i$ acts by flipping the $i$-th 
loop or else flipping all loops except the $i$-th.
It may also interchange the top and bottom vertices.  (If $\e_i$ did not flip any loops, then $\varepsilon_1\varepsilon_2$ would act as the identity.) If the $\e_i$ do not interchange the vertices, then the
transpositions in $S_n$ must, since otherwise there is an invariant forest.

{\bf Case 9: $X_A=RL_n$.} Again, an argument akin to case 1 shows that (twisting with $\alpha:W_n\to W_n$ if necessary) we may assume that $\varepsilon_i$ is supported on the  $i$-th figure-8 in the wedge.

{\bf Case 10:  $X_A=R_{2n}$}.  We have  $R_{2n}=R_n\vee R_n'$ with $A_n$  acting diagonally. $\varepsilon_i$ acts by flipping $e_i$ and $e_i'$ or flipping all other petals and/or interchanging $e_i$ with $e_i'$.

\end{proof}

\begin{remark} When $n$ is odd, certain of the actions in the preceding proposition may fail to be faithful because of the twisting by $\alpha$: when the action of $W_n$ on $X_A$
factors through $\alpha: W_n\to W_n$, the action of  $\Delta$ on $X^A$ must be non-trivial if the action of $W_n$ on $X$ is to be faithful.
\end{remark}

\subsection{Classification of minimal admissible $G_n$-graphs}

Let $G_n\cong S_{n+1}\times \Z/2$ be the subgroup of $\out$ which is realized as the full automorphism group of the $n$-cage $C_{n+1}$ with the first $n$ edges labelled by the generators $a_1,\dots,a_n$ of $F_n$.  The
$\Z/2$ factor of $G_n$ is generated by $\Delta = \e_1\e_2\dots\e_n$, which interchanges the two vertices of $C_{n+1}$, leaving each unoriented edge invariant.

Let $Y$ be a minimal admissible $G_n$-graph of genus at most $m\le 2n=2(n+1)-2$. Assume $n> 7$.
Since $G_n$ contains $B=A_{n+1}$,  Proposition~\ref{p:An} tells us that
  $Y_B$ is isomorphic to either $C_{n+1}$ or $R_{n+1}$ if $m<2n$, with the additional possibilities   $Y=Y_B=C_{n+1}\vee C_{n+1}$ and $Y=K(3,n+1)$  if $m=2n$.
In fact, this last possibility does not occur, because in any faithful action of $G_n$ on $K(3,n+1)$, the central $\Z/2$ leaves the set of 3 cone points invariant
and hence fixes one of them, so the star of this fixed point is a $G_n$-invariant forest, which shows that the $G_n$-action is not minimal.  

\begin{proposition}\label{p:Gn} If $Y$ is a $G_n$-minimal admissible  graph and $n>7$, then the subgraph $Y_B$  is invariant under all of $G_n$.

If the central element $\Delta\in G_n$ acts on $Y_B$ non-trivially, then it flips all edges  (if $Y_B=R_{n+1}$), interchanges the two vertices  (if  $Y_B=C_{n+1}$) or interchanges the two copies of $C_{n+1}$ (if $Y= C_{n+1}\vee C_{n+1}$) without permuting the edges.  The odd permutations of  $S_{n+1}$  act on $Y_B$  by permuting the edges in the standard way or else
each acts by the permutation  composed with the action of $\Delta$.   

 \end{proposition}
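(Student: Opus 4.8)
The plan is to follow the pattern of the proof of Proposition~\ref{p:Wn}, but with a centralizer computation in place of the semidirect-product bookkeeping that was appropriate there, since $G_n=S_{n+1}\times\Z/2$ is a direct product. Most of the geometric work is already in hand: as noted just before the statement, Proposition~\ref{p:An} applied to $B=A_{n+1}$ (using $m\le 2n=2(n+1)-2$), together with the elimination of $K(3,n+1)$, leaves only $Y_B\in\{R_{n+1},C_{n+1}\}$, with the extra possibility $Y=Y_B=C_{n+1}\vee C_{n+1}$ when $m=2n$. So two things remain: the $G_n$-invariance of $Y_B$, and, in each of these three cases, a description of how $\Delta$ and the odd permutations of $S_{n+1}$ can act on $Y_B$.

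The invariance is immediate. Since $B=A_{n+1}$ is normal in $G_n$, the group $G_n$ permutes the $B$-orbits of edges of $Y$, and hence preserves the set of edges fixed by $B$ together with the complementary set; thus both the fixed subgraph $Y^B$ and $Y_B$ are $G_n$-invariant, and $Y=Y^B\cup Y_B$.

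For the action on $Y_B$, I would first fix the standard action of $B$ on the $n+1$ edges (resp.\ pairs of edges) of $Y_B$; this is legitimate because each of $R_{n+1}$, $C_{n+1}$, $C_{n+1}\vee C_{n+1}$ carries a single conjugacy class of copies of $A_{n+1}$ (the remark preceding Proposition~\ref{p:An}). Now exploit the two remaining pieces of $G_n$. Because $\Delta$ is central, $\Delta|_{Y_B}$ must centralize the standard $A_{n+1}$ inside the small explicit group ${\rm Aut}(Y_B)$, and one computes this centralizer case by case. For $Y_B=R_{n+1}$ one has ${\rm Aut}(R_{n+1})=(\Z/2)^{n+1}\rtimes S_{n+1}$; the image of a centralizing element in $S_{n+1}$ centralizes $A_{n+1}$, hence is trivial (as $n+1\ge 4$), while in $(\Z/2)^{n+1}$ only the $A_{n+1}$-invariant vectors $\mathbf 0$ and $(1,\dots,1)$ occur, so the centralizer is $\{1,\e_1\cdots\e_{n+1}\}$ and a nontrivial $\Delta|_{R_{n+1}}$ flips all petals. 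For $Y_B=C_{n+1}$, ${\rm Aut}(C_{n+1})=S_{n+1}\times\Z/2$ with the $\Z/2$ the vertex-swap, and the centralizer of the standard $A_{n+1}$ is $\{1,\text{vertex-swap}\}$. For $Y=C_{n+1}\vee C_{n+1}$ with diagonal $A_{n+1}$, one has ${\rm Aut}(Y)=S_{n+1}\wr\Z/2$, and a centralizing automorphism cannot permute the $n+1$ edge-pairs (the centralizer of the transitive $A_{n+1}$-action on $n+1$ points being trivial), so the centralizer of the diagonal $A_{n+1}$ is $\{1,\text{copy-swap}\}$; thus a nontrivial $\Delta|_{Y_B}$ interchanges the two copies without permuting the edges. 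For the odd permutations, the homomorphism $S_{n+1}\to{\rm Aut}(Y_B)$ restricts on $A_{n+1}$ to the standard embedding; when $Y_B=R_{n+1}$, composing with ${\rm Aut}(R_{n+1})\to S_{n+1}$ gives an automorphism of $S_{n+1}$ that is the identity on $A_{n+1}$, hence the identity (as $n+1\ne 6$ all automorphisms of $S_{n+1}$ are inner, and the only inner one fixing $A_{n+1}$ pointwise is trivial), so each odd $\sigma$ permutes the petals standardly, post-composed with flipping a subset of them, and $A_{n+1}$-conjugacy of transpositions forces that subset to be empty or everything. The cases $C_{n+1}$ and $C_{n+1}\vee C_{n+1}$ go the same way, with ``flip all petals'' replaced by the vertex-swap, resp.\ the copy-swap, i.e.\ by $\Delta|_{Y_B}$; reassembling, each odd permutation acts on $Y_B$ by the standard permutation of the edges, possibly composed with the action of $\Delta$ on $Y_B$.

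I expect the main obstacle to be the case $Y=C_{n+1}\vee C_{n+1}$: there one must describe ${\rm Aut}(C_{n+1}\vee C_{n+1})$ precisely enough to isolate the ``interchange the two copies'' involution from the spurious permutations of the two $(n+1)$-element edge sets, and then verify that a central involution acting nontrivially on $Y_B$ genuinely does so ``without permuting the edges'' (here one also uses that, as $Y=Y_B$ when $m=2n$, faithfulness forces $\Delta|_{Y_B}$ to be nontrivial). The $R_{n+1}$ and $C_{n+1}$ cases and the invariance statement are routine by comparison, being in each instance a centralizer computation and an extension problem $S_{n+1}\to{\rm Aut}(Y_B)$ inside an explicit finite group.
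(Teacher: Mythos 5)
Your argument is correct and, for two of the three assertions, is exactly the paper's: the $G_n$-invariance of $Y_B$ comes from normality of $B=A_{n+1}$ (which makes $G_n$ permute the $B$-orbits of edges, hence preserve $Y^B$ and $Y_B$), and the description of $\Delta|_{Y_B}$ comes from computing the centralizer of the standard $B$-action inside ${\rm Aut}(Y_B)$ in each of the three cases --- the paper phrases this as ``the only non-trivial graph automorphism commuting with the $B$-action,'' which is the same centralizer computation you carry out, including the identification ${\rm Aut}(C_{n+1}\vee C_{n+1})\cong S_{n+1}\wr\Z/2$.

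Where you diverge is the statement about odd permutations. The paper disposes of it in one line by invoking Lemma~\ref{Snaction}: applied to the set of \emph{oriented} edges of $Y_B$ (or to the edge-pairs in the wedge case), whose non-trivial $A_{n+1}$-orbits are exactly two and are interchanged by $\Delta$, it says at once that every odd permutation either preserves both orbits or swaps both, uniformly, which is precisely ``standard, or standard composed with $\Delta$.'' You instead run an explicit extension computation inside ${\rm Aut}(Y_B)$. That route works, but as written it has a soft spot: for $Y_B=R_{n+1}$ you claim that ``$A_{n+1}$-conjugacy of transpositions forces the flip-set to be empty or everything.'' Conjugacy alone only forces the flip-set of a transposition to be invariant under its centralizer in $A_{n+1}$, which has \emph{two} orbits on the petals, leaving four candidate subsets. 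To cut these down you need the additional relation coming from the coset structure --- e.g.\ that $v_{\sigma\alpha}=v_\sigma$ for all $\alpha\in A_{n+1}$ together with the cocycle identity forces $\alpha(v_\sigma)=v_\sigma$, whence transitivity of $A_{n+1}$ gives $\emptyset$ or everything; alternatively, compare two commuting transpositions whose product lies in $A_{n+1}$. This is a one-line repair, and the cleanest fix is simply to use Lemma~\ref{Snaction} on oriented edges as the paper does. With that patched, the proof is complete.
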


\begin{proof} Since $B=A_{n+1}$ is normal in $G_n$, the action of $G$ preserves the fixed subgraph $Y^B$ of $B$, and hence also preserves the complementary subgraph $Y_B$.  
In particular, $\Delta$  
acts on $Y_B$ by an automorphism that commutes with the $B$-action.  If $Y_B=R_{n+1}$, the only non-trivial graph automorphism which commutes with the $B$-action is the one which flips each petal of the rose without permuting the petals.  If $Y_B=C_{n+1}$, the only non-trivial graph automorphism which commutes with the $B$-action is the one which interchanges the vertices of $C_{n+1}$ without permuting the edges.  If $Y=Y_B=C_{n+1}\vee C_{n+1}$ the only non-trivial graph automorphism which commutes with the (diagonal) $B$-action is the one which interchanges the two copies of $C_{n+1}$.

The statement about the action of odd permutations on $Y_B$ follows from Lemma~\ref{Snaction}.
\end{proof}

\section{Proof of Theorem~\ref{t:nomaps}}

We are now in a position to prove that for $n>8$, any homomorphism from  $\out $ to ${\rm{Out}}(F_m)$ has  image  of order at most two for   $n<m\leq 2n-2$.  If $n$ is even, this can be improved to $m\leq 2n$.  
We first reduce our problem using the following:

\begin{proposition}\label{p:image}
If $n\ge 3$ and $m <2^{n-1}-1$, then
  any homomorphism  $\out \to {\rm{Out}}( F_m)$ which is not injective on
both $W_n$ and on $G_n$ has image of order at most two.  
\end{proposition}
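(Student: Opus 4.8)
The plan is to exploit the known structure of finite subgroups of $\out$ together with results from \cite{BV3} describing when $W_n$ fails to inject. Recall that $W_n\cong(\Z/2)^n\rtimes S_n$ and $G_n\cong S_{n+1}\times\Z/2$ both sit inside $\out$, and that $W_n$ is (up to conjugacy) the unique maximal finite subgroup containing $A_n$ among those realizable on $R_n$. Let $\Phi\colon\out\to\outm$ be a homomorphism. The hypothesis says $\Phi$ is not injective on at least one of $W_n$, $G_n$; I want to deduce in \emph{either} case that $\Phi|_{W_n}$ is already non-injective, and then invoke \cite{BV3}.

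First I would handle the case where $\Phi|_{W_n}$ is not injective directly. The kernel $K=\ker(\Phi|_{W_n})$ is a non-trivial normal subgroup of $W_n$; I would enumerate the normal subgroups of $W_n\cong(\Z/2)^n\rtimes S_n$ (for $n>8$ these are controlled by the simplicity of $A_n$: the relevant ones are $A_n$, $S_n$, the various $W_n$-submodules of $(\Z/2)^n$ — namely $0$, the diagonal $\langle\Delta\rangle$, the "sum-zero" submodule, and all of $(\Z/2)^n$ — and products thereof). In all cases $K$ contains either $A_n$ or $\langle\Delta\rangle$ or the sum-zero submodule. Then I would quote the relevant statement from \cite{BV3}: if a homomorphism $\out\to Q$ is non-injective on $W_n$ (equivalently kills one of these normal subgroups), its image has order at most $2$ — this is precisely the input the proposition is designed to feed into. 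The bound $m<2^{n-1}-1$ presumably enters \cite{BV3} here, or else is used below to rule out $(\Z/2)^n\hookrightarrow\outm$ via counting elementary abelian subgroups.

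The remaining case is where $\Phi|_{W_n}$ \emph{is} injective but $\Phi|_{G_n}$ is not. Here the main work is a bridging argument: I claim injectivity of $\Phi$ on $W_n$ forces injectivity on $G_n$, giving a contradiction. Both $W_n$ and $G_n$ contain a common large alternating group — $A_n\subset W_n$ and $A_n\subset A_{n+1}\subset G_n$ — and more importantly they share the elementary abelian subgroup generated by the $\e_i$, with $\Delta=\e_1\cdots\e_n$ the generator of the central $\Z/2$ of $G_n$. If $\Phi|_{G_n}$ has non-trivial kernel, that kernel (a normal subgroup of $S_{n+1}\times\Z/2$, so one of $A_{n+1}$, $S_{n+1}$, $\langle\Delta\rangle$, or a product) contains either $A_{n+1}\supset A_n$ or $\langle\Delta\rangle$. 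If it contains $A_n$, then $\Phi$ is already non-injective on $A_n\subset W_n$, contradicting injectivity on $W_n$. If the kernel is exactly $\langle\Delta\rangle$, I would argue that $\Delta\in W_n$ as well (it is the product of all the $\e_i$), and that any homomorphism killing $\Delta$ but injective on $S_n$ must still collapse part of the $(\Z/2)^n$ in $W_n$ — because the $W_n$-module generated by $\Delta$ inside $(\Z/2)^n$ together with conjugation forces more to die — again contradicting injectivity on $W_n$. Assembling these, $\Phi|_{W_n}$ injective $\Rightarrow$ $\Phi|_{G_n}$ injective, so the hypothesis "not injective on both" is equivalent to "not injective on $W_n$", and we are back in the first case.

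The main obstacle I anticipate is the bookkeeping in the bridging step: pinning down exactly which normal subgroups of $G_n$ can occur as $\ker(\Phi|_{G_n})$ and checking that each forces a collapse visible inside $W_n$, in particular handling the delicate central case $\ker=\langle\Delta\rangle$ cleanly (this is where one must use that $\Delta$ and the module structure of $(\Z/2)^n$ are shared between the two subgroups, not just an abstract isomorphism). Once that is settled, the reduction to \cite{BV3} is immediate, and the numerical hypothesis $m<2^{n-1}-1$ should only be needed to guarantee that $(\Z/2)^n$ (or the relevant large elementary abelian or alternating subgroup) cannot embed in $\outm$ in a way that would circumvent the argument — a fact one can extract from the realization theorem (Theorem~\ref{t:real}) by an Euler characteristic / counting estimate on admissible graphs of genus $m$.
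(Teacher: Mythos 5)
The first half of your argument --- reducing ``not injective on $G_n$'' to ``not injective on $W_n$'' --- is correct and is exactly what the paper does: any non-trivial normal subgroup of $G_n\cong S_{n+1}\times\Z/2$ either is $\langle\Delta\rangle$ or contains $A_{n+1}$, and in either case the kernel meets $W_n$ non-trivially (you overcomplicate the central case: $\Delta$ is literally an element of $W_n$, so if $\Delta$ dies then $\Phi|_{W_n}$ is already non-injective; no module-theoretic argument about $(\Z/2)^n$ is needed).

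The genuine gap is in the second half. Proposition C of \cite{BV3} does \emph{not} say that a homomorphism non-injective on $W_n$ has image of order at most $2$; it says only that such a homomorphism factors through $\out\to{\rm{PGL}}(n,\Z)$. The conclusion you want is exactly what still has to be proved, and this is where the hypothesis $m<2^{n-1}-1$ enters --- not in \cite{BV3} and not via a count of elementary abelian subgroups in admissible graphs. The missing chain is: (i) by \cite{BridsonFarb} every homomorphism ${\rm{PGL}}(n,\Z)\to\outm$ has finite image; (ii) since the kernel of $\outm\to{\rm{GL}}(m,\Z)$ is torsion-free, that finite image embeds in ${\rm{GL}}(m,\Z)\subset{\rm{GL}}(m,\C)$; (iii) a non-trivial finite quotient of ${\rm{PGL}}(n,\Z)$ is either $\Z/2$ or contains a non-trivial finite quotient of ${\rm{PSL}}(n,\Z)$, which is a finite extension of some ${\rm{PSL}}(n,\Z/p)$; and (iv) by Landazuri--Seitz and Kleidman--Liebeck the minimal degree of a non-trivial complex representation of such a group is $2^{n-1}-1$, contradicting $m<2^{n-1}-1$. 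Your fallback suggestion (bounding the rank of elementary abelian subgroups realizable on genus-$m$ graphs) is the content of the paper's remark following the proposition, but it only yields the much weaker range $m\le 2n$ and cannot produce the stated bound $2^{n-1}-1$, which is genuinely a fact from the modular representation theory of finite groups of Lie type.
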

\begin{proof} If  a homomorphism is not injective on $G_n$ then the kernel either consists of the central involution
$\Delta$ or contains $A_{n+1}$. In either case, it follows that the homomorphism is not injective on $W_n$.
In (\cite{BV3}, Proposition C) we proved that any homomorphism
from $\out$ that is not injective on $W_n$ must factor through $\out\to{\rm{PGL}}(n,\Z)$, and by \cite{BridsonFarb}
all homomorphisms ${\rm{PGL}}(n,\Z)\to {\rm{Out}}( F_m)$ have finite image.
 
The kernel of the natural map $\outm\to{\rm{GL}}(m,\Z)$ is torsion-free,
so the image of ${\rm PGL}(n,\Z)$  in $\outm$ maps injectively  to ${\rm{GL}}(m,\Z)$. A non-trivial finite image of ${\rm{PGL}}(n,\Z)$ is
either  just $\Z/2$ (and the map factors through the determinant) or else it contains a  non-trivial finite image of
${\rm{PSL}}(n,\Z)$.
Every finite image of ${\rm{PSL}}(n,\Z)$ is a finite extension of the simple group
${\rm{PSL}}(n,\Z/p)$ for some prime $p$. Lanazuri and Seitz \cite{LSeitz} prove that the 
minimal degree of a complex
representation of ${\rm{PSL}}(n,\Z/p)$  occurs when $p=2$ and is  equal to $m=2^{n-1}-1$.
 Kleidman and Liebeck \cite{KL} prove that no finite extension of ${\rm{PSL}}(n,\Z/p)$ has a
 representation of lesser degree. 
\end{proof}

\begin{remark} For our purposes, it is sufficient to have the above result in
the range $8<n<m\le 2n$ and one can prove this without recourse to \cite{LSeitz}.
Indeed, since an elementary $p$-group in ${\rm{GL}}(m,\Z)$ can be diagonalised in ${\rm{GL}}(m,\C)$, it has rank at most\footnote{using Smith theory one
can improve this to $\lfloor m/2\rfloor$ if $p$ is odd}
 $m$, whereas ${\rm PSL}(n,\Z/p)$
contains an elementary $p$-group of rank
$\lfloor n/2\rfloor^2$, namely the largest unipotent subgroup that one can fit in a square block above the diagonal.  
\end{remark}

We remind the reader that $\det:\out\to\Z/2$ is the composition of the determinant map
${\rm{GL}}(n,\Z)\to\Z/2$ and the natural surjection $\out\to {\rm{GL}}(n,\Z)$. 

\begin{lemma}\label{det} Suppose $m\geq n\geq 3$,
let $\psi_1,\psi_2\in\out$  and let $\phi\colon \out \to {\rm{Out}}( F_m)$  be any homomorphism.  If $\det(\psi_1)=\det(\psi_2)$, then $\det(\phi(\psi_1))=\det(\phi(\psi_2))$  \end{lemma}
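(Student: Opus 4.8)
The plan is to exploit the fact that $\det\colon\out\to\Z/2$ has abelian image and that the commutator subgroup of $\out$ is precisely the kernel of $\det$. First I would recall that the natural surjection $\out\to\GL$ followed by $\det\colon\GL\to\Z/2$ defines $\det\colon\out\to\Z/2$, and that this is the unique surjection of $\out$ onto $\Z/2$; equivalently $\ker(\det)$ is a characteristic subgroup that contains $[\out,\out]$, and in fact $\out/[\out,\out]\cong\Z/2$ for $n\ge 3$ (the abelianization of $\out$ is $\Z/2$). This is the one external input I would cite.

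Given that, the argument is essentially formal. Suppose $\det(\psi_1)=\det(\psi_2)$. Then $\psi_1\psi_2^{-1}\in\ker(\det)=[\out,\out]$, so we may write $\psi_1\psi_2^{-1}$ as a product of commutators in $\out$. Applying the homomorphism $\phi$ and then $\det\colon\outm\to\Z/2$ (the analogous map for $F_m$, using $m\ge n\ge 3$ so that $\outm$ also has abelianization $\Z/2$), we get $\det(\phi(\psi_1)\phi(\psi_2)^{-1})=\det(\phi(\psi_1\psi_2^{-1}))$, and since $\phi(\psi_1\psi_2^{-1})$ is a product of commutators in $\outm$, it lies in $\ker(\det)$, hence $\det(\phi(\psi_1)\phi(\psi_2)^{-1})=1$. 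Because $\det\colon\outm\to\Z/2$ is a homomorphism into an abelian group, this gives $\det(\phi(\psi_1))=\det(\phi(\psi_2))$, as required.

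There is no real obstacle here: the only thing one needs to be careful about is which ``$\det$'' is meant on the target side (the statement as written only defines $\det$ on $\out$, but the intended meaning for $\phi(\psi_i)\in\outm$ is the corresponding map $\outm\to\GL_m\to\Z/2$), and the fact that being in the commutator subgroup is preserved by any homomorphism. If one prefers to avoid invoking the computation of the abelianization of $\out$, one can instead argue directly: $\det$ restricted to $W_n\subset\out$ already surjects onto $\Z/2$ (e.g.\ a transposition in $S_n\subset W_n$ has determinant $-1$), so it suffices to treat the case where $\psi_1,\psi_2$ differ by an element of $\ker(\det)$ and then note $\ker(\det)$ is generated by the commutators together with squares, all of which map into $\ker(\det)$ on the $F_m$ side. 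Either way the proof is a two-line consequence of functoriality of passing to the commutator subgroup; I would write it up in the short form above.
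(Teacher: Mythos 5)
Your proof is correct and rests on the same essential fact as the paper's one\hyphenation{line}-line argument: the paper says ``for $n\ge 3$ the only surjection $\out\to\Z/2$ is $\det$'' and applies this to the composite $\det\circ\phi$, while you phrase the same input as $\ker(\det)=[\out,\out]$ and push commutators through $\phi$. These are two formulations of the statement that the abelianization of $\out$ is $\Z/2$, so the approaches are essentially identical.
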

 \begin{proof}  For $n\geq 3$, the only surjection from $\out $ to $\Z/2$ is the determinant map.
 \end{proof}

\smallskip

{\em{For the remainder of this section we suppose that $8<n<m$
and that we  have a homomorphism  $$\phi\colon \out \to {\rm{Out}}( F_m)$$ which is injective on $G_n$ and on $W_n$.  We fix a minimal  admissible graph $X$ of genus $m$ realizing  $\phi(W_n)$ and a minimal admissible graph $Y$  of genus $m$ realizing $\phi(G_n)$. }}

\bigskip

Note that  the intersection $G_n\cap W_n$  is isomorphic to $S_n\times \Z/2$, where $S_n$ permutes the generators of $F_n$ and  $\Z/2$ is generated by the automorphism $\Delta$ which inverts all of the generators.  The image of each element in this intersection is realized both as an automorphism of $X$ and as an automorphism of $Y$.   

\begin{proposition}\label{2n}  For $m\leq 2n$, the only possibilities for $X$ and $Y$  are those with  $X_A=R_n$ and $Y_B=C_{n+1}$ or $R_{n+1}$.   
\end{proposition}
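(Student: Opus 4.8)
The plan is to reduce to a short list of candidates via Propositions~\ref{p:Wn} and~\ref{p:An}, and then to exploit the fact that $W_n\cap G_n\cong S_n\times\langle\Delta\rangle$ is realised faithfully on both graphs. For $m\le 2n-2$ there is nothing to do: Proposition~\ref{p:Wn} already forces $X_A=R_n$, and Proposition~\ref{p:An} applied to $B=A_{n+1}\le G_n$ forces $Y_B\in\{C_{n+1},R_{n+1}\}$. So the content lies in the ranges $m=2n-1$ and $m=2n$, where those results (together with the discussion immediately preceding this proposition) leave the extra candidates $X_A\in\{C_{2n},R_n\vee C_n,CL_n\}$ --- with $R_{2n}$, $RL_n$, $R_n\sqcup C_n$ as well when $m=2n$ --- and $Y_B=C_{n+1}\vee C_{n+1}$ when $m=2n$; all of these must be excluded.

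The main tool is the following cross-constraint. The subgroup $H:=W_n\cap G_n\cong S_n\times\langle\Delta\rangle$ acts faithfully on $X$ (as a subgroup of $\phi(W_n)$) and on $Y$ (as a subgroup of $\phi(G_n)$), so for every subgroup $K\le H$ the conjugacy class of $\phi(K)$ in $\outm$ is realised by $X$ and by $Y$ alike. Any two admissible genus-$m$ graphs realising that class are joined by a $\phi(K)$-equivariant homotopy equivalence --- they lie in the contractible, hence connected, fixed-point set of $\phi(K)$ in Outer space --- and such a map restricts to a homotopy equivalence of the $\phi(K)$-fixed subgraphs. Hence $\chi\big(\mathrm{Fix}(\phi(K))\big)$ depends only on $\phi(K)$, and in particular $\chi\big(\mathrm{Fix}_X(\phi(K))\big)=\chi\big(\mathrm{Fix}_Y(\phi(K))\big)$.

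Applying this with $K=A_n$ and reading off the $A_n$-fixed subgraphs from the explicit actions in Propositions~\ref{p:Wn} and~\ref{p:Gn}, an Euler-characteristic count gives $\chi(\mathrm{Fix}_X(\phi(A_n)))=1-m+n$ exactly when $X_A=R_n$ (the $A_n$-fixed part then being the single vertex of the rose together with the fixed subgraph $X^A$), whereas each of the other candidates for $X_A$ gives the strictly larger value $2$ if $m=2n-1$ and $1$ if $m=2n$; symmetrically $\chi(\mathrm{Fix}_Y(\phi(A_n)))=1-m+n$ when $Y_B\in\{C_{n+1},R_{n+1}\}$, and equals $1$ when $Y=C_{n+1}\vee C_{n+1}$. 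Equating the two sides settles $m=2n-1$ outright, since there $1-m+n=2-n\ne 2$; and for $m=2n$ it leaves only the possibility that $Y=C_{n+1}\vee C_{n+1}$ \emph{and} $X_A\ne R_n$. To eliminate this I would bring in finer invariants of the common elements --- the trace of $\phi(\Delta)$ on $H_1(F_m;\Q)$ and $\chi(\mathrm{Fix})$ for $\Delta$ and for a transposition $(1\,2)\in S_n$, all again computable from either realisation. On the side $Y=C_{n+1}\vee C_{n+1}$, faithfulness of $\phi$ on $G_n$ forces $\Delta$ to interchange the two copies of $C_{n+1}$ without permuting edges (Proposition~\ref{p:Gn}), which pins these invariants down completely; comparing with what Proposition~\ref{p:Wn} allows for the $W_n$-action on each of $C_{2n},R_n\vee C_n,CL_n,R_{2n},RL_n,R_n\sqcup C_n$ --- using also that $A_n$ acts trivially on every two-element set, which restricts how $\Delta$ and transpositions may permute paired edges and interchange cage-vertices --- should yield a contradiction in each case.

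I expect this last step, the exclusion of $Y_B=C_{n+1}\vee C_{n+1}$ for $m=2n$, to be the main obstacle: it is invisible to the $\chi(\mathrm{Fix}(\phi(A_n)))$ invariant, so it forces a genuinely case-by-case analysis with several invariants and delicate bookkeeping --- tracking how the $A_n$-fixed tree is glued to the mobile subgraph $X_A$, and whether $\Delta$ and the transpositions interchange cage-vertices. The graphs $R_{2n}$ and $RL_n$, for which $X=X_A$ so that there is no fixed subgraph available to absorb a numerical discrepancy, are likely to need a separate, more structural argument --- for example, that $\mathrm{Aut}(R_{2n})$ and $\mathrm{Aut}(RL_n)$ admit no copy of $S_{n+1}\times\Z/2$ acting compatibly with the prescribed $S_n\times\langle\Delta\rangle$-action.
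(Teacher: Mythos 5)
Your first-stage reduction is sound in outline and parallels the paper's: where you compare $\chi\bigl(\mathrm{Fix}(\phi(A_n))\bigr)$ computed from $X$ and from $Y$, the paper compares $\dim V_{-1}(\phi(\sigma))$ for $\sigma=(12)(34)$, and the two invariants separate exactly the same cases (both dispose of $m=2n-1$ and leave, at $m=2n$, the combination $Y=C_{n+1}\vee C_{n+1}$ against the six exceptional $X$'s). But your justification of the invariance of $\chi(\mathrm{Fix})$ is not right as stated: an equivariant map that is a non-equivariant homotopy equivalence need not restrict to a homotopy equivalence of fixed subgraphs. What saves the claim is that two admissible realizations are joined, inside the contractible fixed-point subcomplex of the spine of Outer space (Krsti\'c--Vogtmann), by a chain of \emph{equivariant forest collapses}, and a collapse of an invariant forest does induce a forest collapse of fixed subgraphs. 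You need to say this; the paper avoids the issue entirely by working with $H_1$, which is manifestly an invariant of the conjugacy class in $\outm$.

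The genuine gap is the second stage. You explicitly defer the exclusion of $Y=C_{n+1}\vee C_{n+1}$ at $m=2n$ with ``should yield a contradiction in each case,'' and the invariants you propose would not all deliver it: on $Y$ the matrix of $\phi(\Delta)$ is $\bigl(\begin{smallmatrix}0&I\\ I&0\end{smallmatrix}\bigr)$, while on the candidates $X$ it is $\pm\operatorname{diag}(-I_n,I_n)$; these have the \emph{same} trace and the same $(\pm1)$-eigenspace dimensions, being conjugate over $\Q$. The paper's point is precisely that they are not conjugate over $\Z$ (the lattices spanned by eigenvectors have different covolumes), and even this does not handle $X=R_{2n}$, which requires a separate argument via Lemma~\ref{det}: whatever $W_n$-action one puts on $R_n\vee R_n'$ compatible with $\Delta$ exchanging the factors, either $\Delta$ or some $\e_i$ acts with determinant $-1$ while $(12)$ acts with determinant $+1$, contradicting the fact that $\det$ is the unique surjection $\out\to\Z/2$. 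Without carrying out this case analysis --- or replacing it by an invariant that genuinely distinguishes the integral conjugacy classes --- the proposition is not proved for $m=2n$.
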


\begin{proof}

We first consider the induced action of $\phi(\sigma)$ on $H_1(F_m)$, where $\sigma=(12)(34)\in A_n$.  We calculate the dimension of the (-1)-eigenspace $V_{-1}(\sigma)$ using the action of $\sigma$ on both $X$ and $Y$.
If $Y=R_{n+1}\vee Y^B$ or $C_{n+1}\cup Y^B$, this calculation gives $\dim(V_{-1}(\sigma))=2$, and if $Y=C_{n+1}\vee C_{n+1}$ we have $\dim(V_{-1}(\sigma))=4$. This covers all possibilities for $Y$ by Proposition~\ref{p:Gn}.

Proposition~\ref{p:Wn} lists all possibilities for $W_n$-graphs, for $m\leq 2n$.  Using these, we calculate $\dim(V_{-1}(\sigma))=4$ if  $X_A=C_{2n}, R_n\vee C_n, R_n\sqcup CL_n, R_{2n}$ or $RL_n$, and $\dim(V_{-1}(\sigma))=2$  if $X=R_n\vee X^A$.  

Therefore to prove the proposition we need only eliminate the possibilities that $Y=C_{n+1}\vee C_{n+1}$ (which has rank $2n$) and
\begin{enumerate}
\item $X=C_{2n}\vee S^1$
\item $X=R_n\vee C_n\vee S^1$
\item $X=R_n\sqcup C_n\cup S^1$
\item $X=CL_n\vee S^1$
\item   $X= RL_n, $ and
\item   $X= R_{2n}.$
\end{enumerate}   
We eliminate these possibilities by considering the action of $\Delta$.
We know that $\Delta$ acts on  $Y=C_{n+1}\vee C_{n+1}$ by interchanging the two copies of $C_{n+1}$ and commuting with the $A_{n+1}$-action, so in an appropriate basis for $H_1(F_{2n})$   the matrix of the induced action on $H_1(F_{2n})$  is
 $$
D_Y= \begin{pmatrix}
 0&I \\
 I&0
 \end{pmatrix}.
 $$
This has $(-1)$-eigenspace $V_{-1}(\Delta)$ of dimension exactly $n$, so this must also be true for the action of $\Delta$ on $X$.
If $X=C_{2n}\vee S^1$ (case (1) above), $\Delta$ must therefore interchange the two copies of $C_n$ in $C_{2n}$ and flip the extra $S^1,$ so that the matrix of $\Delta$ is
$$
D_X= \pm  \begin{pmatrix}
0&I_{n-1}&0&0 \\
I_{n-1}&0&0&0 \\
0&0&-1&0 \\
0&0&0&1 \\
 \end{pmatrix}.
 $$
In cases (2)-(5) the fact that $V_{-1}(\Delta)$ has dimension exactly $n$ implies that $\Delta$ acts by flipping exactly $n$ loops in some $A_n$-orbit, and therefore in an appropriate basis  the matrix for $\Delta$ must be
$$
D_X= \pm  \begin{pmatrix}
-I&0 \\
 0&I
 \end{pmatrix}.
 $$
In all of these cases, $D_X$ and $D_Y$ are not conjugate in ${\rm{GL}}(2n,\Z)$.  To see this, note that the sublattice of $\Z^n$ spanned by all eigenvectors has different covolume  for $D_Y$ and $D_X$.

Finally, if $X=R_{2n}=R_n\vee R_n'$ then the same reasoning shows that   the action of $\Delta$ must exchange the two copies of $R_n$. The transposition $(12)$  must act either by sending $e_1\to e_2$ and $e_1' \to e_2'$ or by sending $e_1\to e_2'$ and $e_1' \to e_2$.  In either case $(12)$ acts by a transformation with  determinant +1.   If $n$ is odd, then $\Delta$ acts with determinant $-1$, contradicting Lemma~\ref{det}.  If $n$ is even, then  $\varepsilon_i$ must act by
exchanging the $i$-th edge $e_i$  of $R_n$  with the corresponding edge $e_i'$ of $R_n'$ (possibly flipping them both) and fixing (or flipping) all $e_j$ for $j\neq i$; in any case the induced map on $H_1(F_{2n})$ has determinant -1, again contradicting Lemma~\ref{det}.    
 \end{proof}

\begin{proposition}\label{p:fixed}  For $n<m\leq 2n$, the action of $\Delta$ on $X^A$ must be non-trivial.  
\end{proposition}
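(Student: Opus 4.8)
The plan is to argue by contradiction: assuming $\Delta$ acts trivially on $X^A$, I will exhibit an incompatibility between the induced actions of $\phi(\Delta)$ on $H_1(F_m)\otimes\Q$ read off from $X$ and from $Y$. First I reduce to the case in which $\Delta$ flips all $n$ petals of $X_A=R_n$. By Proposition~\ref{2n} we already know $X_A=R_n$ and $Y_B\in\{C_{n+1},R_{n+1}\}$, and by Proposition~\ref{p:Wn}(1) the action of $W_n$ on $R_n$ is standard or standard twisted by $\alpha$. Since $\alpha$ is the identity on $S_n$ and $\alpha(\Delta)=\Delta^{n+1}$ (all the $\e_i$ commute and $\Delta^2=1$), which is trivial when $n$ is odd, in the twisted case with $n$ odd the element $\Delta$ acts trivially on $X_A$; then, as $\phi|_{W_n}$ is injective and hence $\phi(\Delta)\neq1$, the element $\Delta$ must act non-trivially on $X^A$ and we are done. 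In every remaining case $\Delta$ flips all $n$ petals of $X_A=R_n$, which I assume henceforth; note $X_A\cap X^A$ is the single vertex of the rose, so $H_1(X)=H_1(R_n)\oplus H_1(X^A)$ and $\operatorname{rank}H_1(X^A)=m-n$.

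Next I locate the non-trivial $A_n$-part of $H_1(F_m)\otimes\Q$, where $A_n\subseteq G_n\cap W_n$ permutes $a_1,\dots,a_n$. Let $U\subseteq H_1(F_m)\otimes\Q$ be the sum of the non-trivial $A_n$-isotypic components; since $n\ge 9$, the permutation representation of $A_n$ on $n$ points is non-trivial, so $U\neq0$. Reading off $U$ from $X$: because $A_n$ fixes $X^A$ pointwise, $H_1(X^A)\otimes\Q$ is a trivial $A_n$-module, so $U\subseteq H_1(R_n)\otimes\Q$, and since $\Delta$ flips all petals it acts on $H_1(R_n)\otimes\Q$, hence on $U$, as $-I$. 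Reading off $U$ from $Y$: a ($G_n$-equivariant) Mayer--Vietoris computation, using that $Y_B\cap Y^B$ consists of the at most two $B$-fixed vertices of $Y_B$, gives $H_1(Y)=H_1(Y_B)\oplus H_1(Y^B)\oplus(\text{a summand of rank}\le1)$; since $A_n\subseteq A_{n+1}$ fixes $Y^B$ pointwise and acts trivially on the rank-$\le1$ summand (as $A_n$ is perfect), we get $U\subseteq H_1(Y_B)\otimes\Q$. By Proposition~\ref{p:Gn}, $\Delta$ acts on $H_1(Y_B)\otimes\Q$ as $+I$ or as $-I$; consistency with the $X$-computation forces $-I$, so $\Delta$ acts non-trivially on $Y_B$ — flipping all $n+1$ petals if $Y_B=R_{n+1}$, or exchanging the two vertices of $Y_B$ without permuting edges if $Y_B=C_{n+1}$.

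Now I run a dimension count. Under the standing assumption, $\phi(\Delta)$ acts on $H_1(X)\otimes\Q$ as $-I_n\oplus I_{m-n}$, so $\dim_\Q V_{-1}(\phi(\Delta))=n$ exactly; I contradict this using $Y$. If $Y_B=R_{n+1}$, then $\Delta=-I$ on the rank-$(n+1)$ summand $H_1(R_{n+1})$, so $\dim_\Q V_{-1}(\phi(\Delta))\ge n+1$. If $Y_B=C_{n+1}$, then $\Delta$ exchanges its two vertices $u,v$, both of which lie in $Y^B$. If $u,v$ lie in the same component of $Y^B$, then $H_1(Y)$ has the rank-$1$ connecting summand, generated by a cycle $\gamma$ consisting of one cage edge and a path $p$ in $Y^B$; since $\Delta$ reverses the cage edge and swaps $u,v$, one computes $\Delta_*[\gamma]=-[\gamma]+c$ with $c\in H_1(Y^B)$ and $\Delta_*c=c$, so $[\gamma]+\tfrac12c$ is a $(-1)$-eigenvector over $\Q$ independent of $H_1(C_{n+1})$, again giving $\dim_\Q V_{-1}(\phi(\Delta))\ge n+1$. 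Finally, if $u$ and $v$ lie in distinct components of $Y^B$, then connectedness of $Y$ forces $Y^B$ to consist of exactly these two components $Z,Z'$, which $\Delta$ interchanges; hence $\dim_\Q V_{-1}(\phi(\Delta))=n+\operatorname{genus}(Z)$, so $\operatorname{genus}(Z)=0$, and $Z\cup Z'$ is a $G_n$-invariant subforest of $Y$ (as $G_n$ preserves $Y^B$ and the vertex set $\{u,v\}$ of $Y_B$). If $Z\cup Z'$ has an edge this contradicts $G_n$-minimality of $Y$; if it has none, then $Y=C_{n+1}$, whose genus is $n\neq m$. In every case we reach a contradiction, so $\Delta$ acts non-trivially on $X^A$.

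The step I expect to be the main obstacle is the case $Y_B=C_{n+1}$ with $\Delta$ exchanging its vertices: it is the only configuration in which the crude count of $\dim_\Q V_{-1}(\phi(\Delta))$ does not immediately overshoot or undershoot $n$, and disposing of it requires analysing the Mayer--Vietoris connecting cycle together with the hypothesis that $Y$ is $G_n$-\emph{minimal} rather than merely admissible.
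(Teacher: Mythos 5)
Your proof is correct and follows essentially the same route as the paper's: assuming $\Delta$ acts trivially on $X^A$, it must flip all petals of $R_n$, so $\dim V_{-1}(\phi(\Delta))=n$ exactly, and this is then shown to be incompatible with every possible action of $\Delta$ on $Y$. The only difference is one of detail: where the paper disposes of the case $Y_B=C_{n+1}$ in a single sentence (``$\Delta$ interchanges the vertices of the cage, so must also act non-trivially on $Y^B$''), you carry out the Mayer--Vietoris analysis and invoke $G_n$-minimality explicitly, which is a legitimate filling-in of a step the paper leaves terse (modulo an immaterial sign: with $\Delta_*[\gamma]=-[\gamma]+c$ the eigenvector is $[\gamma]-\tfrac12 c$).
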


\begin{proof}
Suppose that the action of $\Delta$ on $X^A$ is trivial. Then the action of $\Delta$ on $R_n\subset X$ cannot be trivial, and must commute with the action of $A_n$. The only possibility is  that $\Delta$ acts by inverting all of the petals of $R_n$, so that the dimension of  the $(-1)$-eigenspace $V_{-1}(\Delta)$ is exactly equal to  $n$.  

We now calculate the dimension of $V_{-1}(\Delta)$ using $Y$.  If $\Delta$ acts trivially on $Y_B$, then the dimension of $V_{-1}(\Delta)$ is at most the rank of $Y^B$, which is strictly less than $n$.  If $\Delta$ acts non-trivially on $Y_B$ it must invert all edges, since it commutes with the action of $B=A_{n+1}$ on $Y_B$.    If  $Y_B=R_{n+1}$, then it is clear that $V_{-1}(\Delta)$  has dimension at least $n+1$.  This is also true if $Y_B=C_{n+1}$, since $\Delta$ interchanges the vertices of the cage, so must also act non-trivially on $Y^B$.  Thus the computation of $\dim(V_{-1}(\Delta))$ made with $Y$ is inconsistent with the computation made with $X$.  
\end{proof}

\begin{corollary}\label{c:oddDone}   If $n>8$ is even and $n<m\leq 2n$ then  every homomorphism
$\phi\colon \out \to {\rm{Out}}( F_m)$ has image of order at most two.
\end{corollary}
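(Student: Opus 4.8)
The plan is to combine the structural results about admissible $W_n$- and $G_n$-graphs established in the previous section with the elementary fact (Lemma~\ref{det}) that any homomorphism $\phi\colon\out\to\outm$ respects the determinant. The starting point is Proposition~\ref{p:image}: for $m<2^{n-1}-1$ (which certainly holds when $m\le 2n$ and $n>8$), any homomorphism that fails to be injective on \emph{both} $W_n$ and $G_n$ has image of order at most two. So we may assume for contradiction that $\phi$ is injective on both $W_n$ and on $G_n$, and we work within the framework set up at the end of the section: fix a minimal admissible graph $X$ of genus $m$ realizing $\phi(W_n)$ and one $Y$ realizing $\phi(G_n)$.

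Next I would invoke Proposition~\ref{2n}, which (for $m\le 2n$) forces $X_A=R_n$, i.e.\ $X=R_n\vee X^A$. Now consider the central involution $\Delta=\e_1\cdots\e_n\in G_n\cap W_n$. By Proposition~\ref{p:fixed}, when $n<m\le 2n$ the action of $\Delta$ on $X^A$ must be \emph{non-trivial}. The key observation is then that $\Delta$ commutes with the $A_n$-action on $R_n$, so on the $R_n$ summand it can only act by inverting all the petals or trivially; in either case its induced action on $H_1(R_n)=\Z^n$ has determinant $(-1)^n$ or $1$, and more importantly the action on all of $H_1(F_m)$ has a computable determinant. Since $n$ is \emph{even}, $\Delta$ is a product of an even number of the $\e_i$, each of which inverts one generator, so $\det(\Delta)=1$ in $\out$; by Lemma~\ref{det}, $\det(\phi(\Delta))=\det(\phi(1))=1$ as well. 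On the other hand, a non-trivial action of $\Delta$ on $X^A$ commuting with the trivial $A_n$-action there is completely unconstrained except that, combined with the action on $R_n$, it must produce a matrix in $\gln$ of determinant $1$.

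The contradiction I expect to extract is a parity mismatch: the structure theorems pin down the possible actions of $\Delta$ on $X$ tightly enough that any non-trivial action on $X^A$ of the required sort, together with whatever $\Delta$ does on $R_n$, is forced to have determinant $-1$ on $H_1(F_m)$ --- contradicting $\det(\phi(\Delta))=1$. Concretely, one checks that $\Delta$'s only faithful-compatible options involve inverting an odd total number of homology generators (an odd number of petals of $R_n$ plus the edges it flips in $X^A$), which is exactly the obstruction exploited in the proof of Proposition~\ref{2n} for the $R_{2n}$ case. Since $n$ is even this parity cannot be repaired, so no such $\phi$ can be injective on both $W_n$ and $G_n$; by Proposition~\ref{p:image} its image has order at most two.

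The main obstacle I anticipate is making the determinant bookkeeping on $X=R_n\vee X^A$ completely rigorous: one must argue that \emph{every} admissible completion $X^A$ and \emph{every} $A_n$-trivial, $\Delta$-nontrivial action on it is incompatible (via determinant) with the constraint $\det\phi(\Delta)=1$, rather than just checking one representative configuration. This is where one leans hardest on Proposition~\ref{p:fixed} (to know $\Delta$ acts nontrivially on $X^A$ at all) and on Lemma~\ref{det} (to convert the group-theoretic statement $\det\Delta=1$ into a homological constraint downstairs), and one should spell out why the parity of the number of inverted homology classes is an invariant of the graph-automorphism, independent of the choice of $X$.
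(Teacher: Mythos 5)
Your setup is right — reduce via Proposition~\ref{p:image} to the case where $\phi$ is injective on $W_n$ and $G_n$, fix $X$ and $Y$, use Proposition~\ref{2n} to force $X_A=R_n$, and use Proposition~\ref{p:fixed} to see that $\Delta$ must act non-trivially on $X^A$. But the contradiction you then try to extract is the wrong one, and you have not actually closed the argument. Your plan is to show that every non-trivial action of $\Delta$ on $X^A$ forces $\det(\phi(\Delta))=-1$ on $H_1(F_m)$, contradicting $\det(\Delta)=1$. This determinant bookkeeping cannot be made to work in general: a non-trivial involution of $X^A$ commuting with the (trivial) $A_n$-action there is essentially unconstrained, and it can perfectly well have determinant $+1$ on $H_1$ --- for instance it could invert two loops of $X^A$, or swap a pair of loops. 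Your assertion that the ``only faithful-compatible options involve inverting an odd total number of homology generators'' is unsubstantiated and false in general, so no parity contradiction arises. You flagged this yourself as the main obstacle; it is not a gap in exposition but a gap in the argument.

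The missing ingredient is a structural fact you already have available but never invoke: Proposition~\ref{p:Wn} states that \emph{all} of the $\e_i$ have the \emph{same} restriction to $X^A$ (this is the content of Lemma~\ref{Wnaction} applied to the edge set). Call that common restriction $\tau$, an involution of $X^A$. Then $\Delta=\e_1\cdots\e_n$ restricts to $\tau^n$ on $X^A$, and since $n$ is even this is the identity. That directly contradicts Proposition~\ref{p:fixed}, with no determinant computation and no case analysis over completions $X^A$. This is exactly the point where the hypothesis that $n$ is even enters, and it is why the odd case requires the entirely different representation-theoretic argument of Proposition~\ref{p:atmost2}.
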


  \begin{proof}  If the image of $\phi$ has order larger than $2$, then by Proposition~\ref{p:image} $\phi$ is injective on $W_n$ and $G_n$, and we have minimal admissible $X$ and $Y$ realizing $\phi(W_n)$ and $\phi(G_n)$ as above.   By Proposition~\ref{p:Wn}, each $\varepsilon_i$ acts by the same involution of  $X^A$; since $n$ is even, this means that $\Delta=\prod \varepsilon_i$ acts trivially on $X^A$, contradicting Proposition~\ref{p:fixed}.
 \end{proof}

\begin{corollary}  If $n>8$ is odd,   then every homomorphism $\out \to {\rm{Out}}( F_{n+1})$ has image of order at most two.
\end{corollary}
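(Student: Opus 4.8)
The plan is to deduce the odd case $m=n+1$ by running the same machinery established for the even case, but now exploiting that $n+1$ is \emph{even} so that the subgroup $W_{n+1}\subset \outt$ (rather than $G_n$) carries the parity obstruction. More precisely, I would first observe that if a homomorphism $\phi\colon\outt\to{\rm Out}(F_{n+1})$ has image of order bigger than two, then by Proposition~\ref{p:image} (with $m=n+1<2^{n-1}-1$, valid since $n>8$) it must be injective on both $W_n$ and $G_n$. Then I would replay the argument of Proposition~\ref{2n}, Proposition~\ref{p:fixed} and Corollary~\ref{c:oddDone} in the limiting case $m=n+1$: here $m=n+1\le 2n$ is well within range, so the classifications in Propositions~\ref{p:Wn} and~\ref{p:Gn} apply, and in fact the genus is so small that the list of possibilities for $X$ and $Y$ collapses drastically.

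The key steps, in order, are: \textbf{(1)} Record that $\phi$ injective on $G_n$ and $W_n$ forces, via Propositions~\ref{p:An}, \ref{p:Wn}, \ref{p:Gn} with $m=n+1$, that $X_A=R_n$ (a genus-$n$ subgraph leaves room only for a single extra fixed loop, so $X=R_n\vee S^1$ or $X=R_{n+1}=R_n\vee R_n'$ is excluded for parity reasons already handled in Proposition~\ref{2n}, and $Y_B=C_{n+1}$ or $R_{n+1}$, which exhausts $Y$ since $C_{n+1}=R_{n+1}$ has genus exactly $n+1=m$ and $R_{n+1}$ likewise). \textbf{(2)} Apply Proposition~\ref{p:fixed}: the action of $\Delta=\varepsilon_1\cdots\varepsilon_n$ on $X^A$ must be non-trivial. \textbf{(3)} But $X^A$ here is at most a single fixed loop $S^1$, i.e.\ $X=R_n\vee S^1$, so $\Delta$ acts non-trivially on $X^A$ means $\Delta$ flips that loop; combined with Proposition~\ref{p:Wn}, which says every $\varepsilon_i$ acts by the \emph{same} involution on $X^A$, we get that each $\varepsilon_i$ flips the fixed loop. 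Since $n$ is odd, $\Delta=\prod\varepsilon_i$ then acts on that loop as an odd number of flips, i.e.\ as a flip --- consistent so far --- but now the determinant of $\Delta$ on $H_1(F_{n+1})$ is $(-1)^{(\text{flips on }R_n)+1}$, and one checks against the $Y$-side computation (where $\Delta$ on $C_{n+1}$ or $R_{n+1}$ has a prescribed $(-1)$-eigenspace dimension) that the two values of $\dim V_{-1}(\Delta)$ are incompatible, exactly as in Proposition~\ref{p:fixed}. \textbf{(4)} Conclude via Lemma~\ref{det} and the contradiction that no such $\phi$ exists, hence the image has order at most two.

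The honest obstacle is step~(1): one must double-check that the genus bookkeeping with $m=n+1$ really does kill \emph{all} the larger $X_A$ and $Y_B$ from Propositions~\ref{p:Wn} and~\ref{p:Gn}, since those propositions are phrased for the whole range $m\le 2n$ and many of the listed graphs ($C_{2n}$, $CL_n$, $K(3,n)$, etc.) simply have genus strictly greater than $n+1$ and so are automatically excluded by Lemma~\ref{genus}, but the borderline cases $R_n\vee S^1$ versus $R_{n+1}$ need the parity argument already run in Proposition~\ref{2n}, and one should cite that rather than re-derive it. Once the list is pinned down to $X=R_n\vee S^1$ and $Y=C_{n+1}$ or $R_{n+1}$, the rest is a verbatim specialization of Corollary~\ref{c:oddDone}'s proof, the only change being that the clash is extracted from the single fixed loop of $X$ rather than from a higher-genus $X^A$. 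I would therefore write the proof as: ``By Proposition~\ref{p:image} we may assume $\phi$ is injective on $W_n$ and $G_n$; since $m=n+1\le 2n$, Proposition~\ref{2n} forces $X_A=R_n$ and $Y_B=C_{n+1}$ or $R_{n+1}$, so $X=R_n\vee S^1$; now the argument of Proposition~\ref{p:fixed} shows $\Delta$ acts non-trivially on $X^A=S^1$, while Proposition~\ref{p:Wn} makes every $\varepsilon_i$ flip this loop, and since $n$ is odd this contradicts the value of $\dim V_{-1}(\Delta)$ computed from $Y$.''
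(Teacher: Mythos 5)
Your setup (steps (1) and (2)) follows the paper's route: Proposition~\ref{p:image} reduces to the case where $\phi$ is injective on $W_n$ and $G_n$, Proposition~\ref{2n} pins down $X=R_n\vee S^1$ and $Y_B\in\{R_{n+1},C_{n+1}\}$, and Proposition~\ref{p:fixed} forces $\Delta$ to act non-trivially on the extra loop. But your final contradiction does not go through. In the main case --- the standard $W_n$-action on $R_n$, with each $\e_i$ flipping its own petal and all of them flipping the fixed $S^1$ --- the element $\Delta$ acts as $-I$ on all of $H_1(X)\cong\Z^{n+1}$, so $\dim V_{-1}(\Delta)=n+1$ computed from $X$. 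On the $Y$ side, $Y=R_{n+1}$ and $\Delta$ must act non-trivially there (else $\phi$ is not injective on $G_n$), hence by Proposition~\ref{p:Gn} it flips all $n+1$ petals, and again $\dim V_{-1}(\Delta)=n+1$. The two eigenspace dimensions agree; they are not ``incompatible'', and no contradiction is extracted. This is exactly why the case $m=n+1$ is not a verbatim specialization of Corollary~\ref{c:oddDone}: there the clash came from $\Delta$ acting \emph{trivially} on $X^A$, which is the opposite of the situation Proposition~\ref{p:fixed} has put you in.

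The paper's actual contradiction is a determinant comparison between two \emph{different} elements. Since $n$ is odd, $\det(\Delta)=(-1)^n=-1=\det((12))$ in $\out$, so Lemma~\ref{det} forces $\det(\phi(\Delta))=\det(\phi((12)))$ in ${\rm GL}(n+1,\Z)$. But $\phi(\Delta)=-I_{n+1}$ has determinant $(-1)^{n+1}=+1$ (computed from $X$), while $\phi((12))$, acting on $Y=R_{n+1}$ by transposing two petals (possibly composed with $\Delta$), has determinant $-1$. That is the contradiction; you mention Lemma~\ref{det} only in passing and never identify this clash. A secondary gap: you dismiss $Y_B=C_{n+1}$ on the grounds that $C_{n+1}$ has genus $n+1$, but the $(n+1)$-cage has genus $n$, so $Y=C_{n+1}\cup e$ is a live possibility of genus $n+1=m$; the paper excludes it by observing that the only symmetric way to add an edge embeds $C_{n+1}$ in $C_{n+2}$, making $e$ an invariant forest and contradicting $G_n$-minimality.
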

\begin{proof}  In this case $X=R_n\vee S^1$, and by Proposition~\ref{p:fixed} we may assume $\Delta$ acts non-trivially on $S^1$.  Thus $\Delta$ acts as $-I$ on $H_1(X)$, which has determinant +1.

According to Proposition~\ref{2n} the possibilities for $Y$ are $Y=R_{n+1}$ or $Y=C_{n+1}\cup e$.   However, the second cannot occur, since the only way to symmetrically add an edge to $C_{n+1}$ is to embed $C_{n+1}$ in $C_{n+2}$; but then $e$ is an invariant forest, so this graph is not minimal for the $G_n$-action.   Therefore $Y=R_{n+1}$, and by Proposition~\ref{p:Gn} the transposition $(12)$ acts on $Y$ with determinant $-1$ (since $n+1$ is even), contradicting Lemma~\ref{det}.
\end{proof}

 If $n$ is odd and $m>n+1$ the above arguments do not work so we employ a different approach.
 
\begin{proposition}\label{p:atmost2}
If $n$ is odd, $n>8$ and $n<m\le 2n-2$ then
the image of any homomorphism $\out \to {\rm{Out}}( F_m)$ has order at most two.
\end{proposition}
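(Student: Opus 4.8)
The plan is to argue by contradiction, running the same kind of eigenspace comparison used above for even $n$, but --- since the shortcut ``$\Delta=\prod\e_i$ acts trivially on $X^A$'' is unavailable when $n$ is odd --- spreading the argument over several test elements. Suppose $\phi\colon\out\to\outm$ has image of order greater than $2$. As $m\le 2n-2<2^{n-1}-1$ for $n>8$, Proposition~\ref{p:image} shows $\phi$ is injective on both $W_n$ and $G_n$, so by Theorem~\ref{t:real} we may fix minimal admissible graphs $X$ and $Y$ of genus $m$ realizing $\phi(W_n)$ and $\phi(G_n)$ respectively. Proposition~\ref{2n} gives $X_A=R_n$ and $Y_B\in\{R_{n+1},C_{n+1}\}$; write $X=R_n\cup X^A$ and $Y=Y_B\cup Y^B$, where $X^A$ and $Y^B$ are the respective fixed subgraphs, each connected and of genus at most $m-n\le n-2$. (The case $m=n+1$ is already settled by the corollary above, so we may assume $m\ge n+2$.)

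The key object is the common subgroup $W_n\cap G_n\cong S_n\times\langle\Delta\rangle$, with $\Delta$ inverting all $n$ generators; its image is realized by graph automorphisms of both $X$ and $Y$, so for every $g$ in it the rational canonical form of $\phi(g)$ acting on $H_1(F_m)$ --- in particular the dimensions of its $(\pm1)$-eigenspaces, its determinant, and the covolume of its $(\pm1)$-eigenlattices --- may be computed in two ways, once from the action on $X$ and once from the action on $Y$, and the answers must agree. First I would pin down $\Delta$. By Proposition~\ref{p:fixed} it acts non-trivially on $X^A$, and by Proposition~\ref{p:Wn} the $W_n$-action on $R_n$ is either standard (whence $\Delta=-I$ on $H_1(R_n)$) or twisted by $\alpha$ (whence $\Delta$ is trivial on $H_1(R_n)$); since $\phi(\Delta)$ has order $2$ while the kernel of $\outm\to{\rm GL}(m,\Z)$ is torsion-free, $\Delta$ is non-trivial on $H_1(X)$, hence non-trivial on $H_1(X^A)$ in the twisted case. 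Thus $\dim V_{-1}(\phi(\Delta))$ is $n+d$ in the standard case and $d$ with $1\le d\le m-n\le n-2$ in the twisted case, where $d$ is the dimension of the $(-1)$-eigenspace of the involution induced on $H_1(X^A)$. On the $Y$ side, Proposition~\ref{p:Gn} forces $\Delta$ to act on $Y_B$ either trivially or as $-I$, and again $\phi(\Delta)$ is non-trivial on $H_1(Y)$; so $\dim V_{-1}(\phi(\Delta))$ is at most $m-n\le n-2$ if $\Delta$ is trivial on $Y_B$, and at least $n$ otherwise. Comparing the two computations shows that the $W_n$-action on $R_n$ is standard precisely when $\Delta$ acts non-trivially on $Y_B$.

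Next I would introduce the transpositions $\s\in S_n$ and the products $\s\Delta$ and $\s\s'$, and invoke Lemma~\ref{det}: since $\det\s=\det\Delta=-1$ in ${\rm GL}(n,\Z)$ (as $n$ is odd), $\det\phi(\s)=\det\phi(\Delta)=\det\phi(\s\Delta)$. Computing each of these from $X$ --- where $S_n$ acts standardly on $R_n$ and all transpositions act by one fixed involution on $X^A$ (Proposition~\ref{p:Wn}) --- and from $Y$ --- where the odd permutations act on $Y_B$ either standardly or standardly composed with $\Delta$, and by one fixed involution on $Y^B$ (Proposition~\ref{p:Gn}) --- ties the parities of $\dim V_{\pm1}$ of these elements to one another. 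What remains is a finite case analysis over the binary choices (standard versus $\alpha$-twisted $W_n$-action on $R_n$; standard versus $\Delta$-twisted action of the odd permutations on $Y_B$; whether $\Delta$ acts trivially on $Y_B$; and which involutions the common $S_n$ induces on $X^A$ and on $Y^B$), together with the way the genus budget $m-n\le n-2$ is split between $X^A$ and $Y^B$. I would show each branch is inconsistent: the crude eigenvalue and determinant counts rule out most of them, and the finer invariant already used in the proof of Proposition~\ref{2n} --- conjugate integer matrices have eigenlattices of equal covolume --- rules out those few where the eigenvalue data happens to match.

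I expect this last point to be the main obstacle. Unlike the even case, no single element yields the contradiction outright, and in a handful of branches the $(\pm1)$-eigenspace dimensions and determinants computed from $X$ and from $Y$ genuinely coincide, forcing one to descend to the arithmetic of the eigenlattices to separate the two realizations of $\phi(\Delta)$ (or of $\phi(\s\Delta)$). One must also check that the bound $m\le 2n-2$ is exactly what makes every branch collapse --- with a larger genus budget $X^A$ could carry an involution matching the $Y$-side data --- so verifying the extreme case $m=2n-2$, and arranging the case analysis so that the lattice invariant is called upon only where unavoidable, is the delicate part.
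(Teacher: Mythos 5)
There is a genuine gap, and you have in fact located it yourself: the ``finite case analysis over the binary choices'' that you defer to the end does not close with the invariants you propose, and the paper's proof does not attempt it. Your reduction (Proposition~\ref{p:image}, then Proposition~\ref{2n}, \ref{p:fixed}, \ref{p:Wn}, \ref{p:Gn}) matches the paper's setup exactly, but element-by-element comparison of $\dim V_{\pm 1}$, determinants and eigenlattice covolumes for $\Delta$, $\tau$, $\tau\Delta$ computed from $X$ and from $Y$ cannot finish the job, because the involutions induced on the fixed subgraphs $X^A$ and $Y^B$ are essentially arbitrary integral involutions on lattices of rank up to $m-n\le n-2$. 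Concretely: take the standard $W_n$-action on $R_n$ (so $\Delta=-I$ on $H_1(R_n)$), $Y_B=C_{n+1}$ with $\Delta$ exchanging its vertices (so $\Delta=-I$ on $H_1(Y_B)$), the transpositions acting standardly on both $R_n$ and $C_{n+1}$, and choose the commuting involutions $(\iota,\kappa)$ induced by $(\e_i,\tau)$ on $H_1(X^A)$ to have the same joint eigenspace dimensions as the corresponding pair on $H_1(Y^B)$. Then every invariant on your list agrees for every element of $S_n\times\langle\Delta\rangle$, and no branch of your analysis is eliminated. The obstruction is that matching these local linear-algebra invariants is far weaker than the actual constraint, which is conjugacy of the two realizations inside $\outm$, and there is no combinatorial handle on that.

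What the paper does instead --- and what your proposal is missing --- is a global representation-theoretic input that pins down the composite $\out\to\outm\to{\rm GL}(m,\Z)$ \emph{before} any graph is consulted. By the Potapchik--Rapinchuk theorem (\cite{Rapinchuk}, Theorem 3.1), for $m\le 2n-2$ this composite factors through the standard map $\out\to{\rm GL}(n,\Z)$; combined with superrigidity, complete reducibility and the hook-length computation showing that the only irreducible ${\rm SL}(n,\R)$-representations of dimension $<2n$ are the trivial, standard and contragredient ones, one concludes that $H_1(F_m,\C)$ is exactly one copy of the standard representation plus trivial summands. This forces $\dim V_{-1}(\tau\Delta)=n-1$ \emph{exactly} (not just bounds from above and below), and it is this exact value, played off against the graph computations on $X$ and $Y$, that produces the contradiction. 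Without some such rigidity statement, the hypothesis $m\le 2n-2$ never enters your argument in an essential way, which is a further sign that the combinatorial route alone cannot succeed; indeed the paper explicitly abandons it for odd $n$ and $m>n+1$.
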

 
 \begin{proof}   A theorem of Potapchik and Rapinchuk (\cite{Rapinchuk}, Theorem 3.1)  implies  that for $m\leq 2n-2$  any representation $\out \to {\rm{GL}}(m,\Z)$  factors through the standard representation $\out \to {\rm{GL}}(n,\Z)$.
We apply this fact to the map $\out\to {\rm{GL}}(m,\Z)$ obtained by composing an arbitrary homomorphism $\out\to\outm$
with the natural map $\outm\to {\rm{GL}}(m,\Z)$.

Either the image of
${\rm{SL}}(n,\Z)$ in ${\rm{GL}}(m,\Z)$ under the induced map is finite or else, by super-rigidity, it extends to a representation ${\rm{SL}}(n,\R)\to {\rm{GL}}(m,\R)\subset {\rm{GL}}(m,\C)$. If the image is finite then
as in the proof of Proposition \ref{p:image} it is at most $\Z/2$. In particular, the map from $W_n$ to $\outm$ cannot be injective
because the kernel of $\outm\to {\rm{GL}}(m,\Z)$ is torsion-free. It then follows from the statement of Proposition \ref{p:image} that the image
of our original homomorphism $\out\to\outm$ has order at most 2.

Suppose now that the image of ${\rm{SL}}(n,\Z)$ is infinite and consider its extension $\rho: {\rm{SL}}(n,\R)\to {\rm{GL}}(m,\R)\subset {\rm{GL}}(m,\C)$.
Complete reducibility for  ${\rm{SL}}(n,\R)$ implies that $\rho$ is a sum of irreducible representations (see \cite{FH} page 130).
A calculation with the hook formula shows that the only irreducible representations below dimension $2n$ are the trivial one,
 the standard $n$-dimensional representation and its contragradient. Since we are assuming that the image
of ${\rm{SL}}(n,\Z)$ is infinite, we must have exactly one copy of the standard representation or its contragradient.

Let $\tau\in S_n\subset \out$ be a transposition.
Since $n$ is odd, $\tau\Delta$ has determinant 1. It follows from the above that the $-1$ eigenspace
of $\tau\Delta$ in $H_1(F_m,\C)$ has the same dimension as in the standard representation of ${\rm{SL}}(n,\Z)$,
that is
 $\dim(V_{-1}(\tau\Delta))=n-1$.

We prove that this last equality is impossible by considering the action of $\tau$ on the homology of the graphs $X$ and $Y$. Proposition \ref{2n}
limits the possibilities for $X$ and $Y$, and Propositions \ref{p:Wn}
and \ref{p:fixed} describe the action in each case.

If $\tau$ acts by both permuting the edges of $R_n=X_A\subset X$ and flipping them, then $$\dim(V_{-1}(\tau\Delta))\leq 1 + rank(X^A)< 1+(n-2)=n-1.$$  
Since this is impossible, $\tau$ must act without flipping the edges of $R_n$.  
But then the same calculation shows that $\dim(V_{-1}(\tau))<n-1$.
It follows that $\tau$ must also act without flipping the edges of $Y_B $, since otherwise the dimensions of the $(-1)$-eigenspaces of $\tau$,
as calculated with $X$ and $Y$, would not agree.   But then using $Y$ to compute   $V_{-1}(\tau\Delta)$ gives $\dim(V_{-1}(\tau\Delta))\geq n$, a contradiction.
 \end{proof}

This completes the proof of Theorem \ref{t:nomaps}.

\medskip

In the proof of Corollary \ref{c:oddDone} we invoked Proposition \ref{p:image} to promote the fact that $\Delta\in X^A$ was acting trivially on $X$ to the fact
that the image of $\out$ was finite. Up to that point, we had not used the ambient structure of $\out$ and thus our arguments prove the following  theorem.

\begin{theorem}\label{t:amalgam} Let $W_n = (\Z/2)^n\rtimes S_n$, let $G_n = S_{n+1}\times \Z/2$, and consider the amalgamated free product
$$
P_n = W_n\ast_{(S_n\times \Z/2)}G_n
$$
where the amalgamation identifies the visible $S_n<W_n$  with $S_n<S_{n+1}$ and identifies the $\Z/2$ factor of $G_2$ with the centre of $W_n$ (which therefore
is central in $P_n$).

If $n>8$ is even and $n<m\le 2n$, then the centre of $P_n$ lies in the kernel of every homomorphism $P_n\to \outm$.
\end{theorem}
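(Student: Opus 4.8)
The plan is to follow the proof of Corollary \ref{c:oddDone} almost verbatim: the only place where that argument uses the ambient group $\out$ rather than just the subgroups $W_n$ and $G_n$ is the single appeal to Proposition \ref{p:image}, and I will replace this by an elementary fact about normal subgroups. Write $\Delta=\e_1\cdots\e_n$ for the generator of the centre of $P_n$; the goal is to show $\phi(\Delta)=1$ for every $\phi\colon P_n\to\outm$.

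The first step is purely group-theoretic. Since $n$ is even, the span $\langle\Delta\rangle$ of the all-ones vector is the unique minimal submodule of the $\mathbb F_2$-permutation module $\mathbb F_2^n$, and from this one checks that $\langle\Delta\rangle$ is the unique minimal normal subgroup of $W_n=(\Z/2)^n\rtimes S_n$; in particular it lies in every non-trivial normal subgroup of $W_n$. Likewise, every non-trivial normal subgroup of $G_n=S_{n+1}\times\Z/2$ either equals $\langle\Delta\rangle$ or contains $A_{n+1}$, hence contains $A_n<W_n$. Combining these: if $\phi$ fails to be injective on $W_n$ then $\phi(\Delta)=1$ outright; and if $\phi$ fails to be injective on $G_n$ then either $\Delta\in\ker\phi$ or $A_n\subset\ker\phi$, and in the second case $\phi$ is non-injective on $W_n$, so again $\phi(\Delta)=1$. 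So everything comes down to showing that $\phi$ cannot be injective on both $W_n$ and $G_n$.

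Assuming it were, I would then reproduce the geometric part of the proof of Corollary \ref{c:oddDone}: pick minimal admissible graphs $X$ of genus $m$ realizing $\phi(W_n)$ and $Y$ of genus $m$ realizing $\phi(G_n)$, noting that $W_n\cap G_n=S_n\times\langle\Delta\rangle$ acts on both. Propositions \ref{2n}, \ref{p:Wn} and \ref{p:fixed} are proved using only the actions of $W_n$ and $G_n$ on $X$ and $Y$, together with eigenvalue and integral-conjugacy data for the images in $\gln$ of $\sigma=(12)(34)$, of the transpositions, and of $\Delta$ --- all of which are elements of $W_n$, $G_n$ or $W_n\cap G_n$ --- so these propositions carry over with $P_n$ in place of $\out$. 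From them, $X_A=R_n$, hence (Proposition \ref{p:Wn}) the $\e_i$ all have the same restriction to $X^A$, hence ($n$ even) $\Delta$ restricts trivially to $X^A$, contradicting Proposition \ref{p:fixed}, which says $\Delta$ restricts non-trivially to $X^A$. This would complete the argument, since it forces $\phi$ to be non-injective on $W_n$ and hence $\phi(\Delta)=1$.

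The step I expect to be the main obstacle is checking that the proof of Proposition \ref{2n} really does go through for $P_n$: it contains one appeal to Lemma \ref{det}, used to exclude the case $X_A=R_{2n}$, and for $P_n$ the uniqueness of the surjection to $\Z/2$ is unavailable. Two ways to get around this suggest themselves: either use the precise form of the $W_n$-action on $R_{2n}$ given by Proposition \ref{p:Wn} (the $\e_i$ act inside the two rose factors, so for $n$ even $\Delta$ acts on $H_1(X)$ as $-I\oplus I$, which is not $\gln$-conjugate to the block-swap by which it acts on $Y=C_{n+1}\vee C_{n+1}$, by the same covolume-of-eigenvector-lattice comparison already used for the other genus-$2n$ graphs in that proof); or, more robustly, observe that a homomorphism out of $P_n$ realizing the remaining $R_{2n}$-action would force the amalgamated subgroup $S_n\times\langle\Delta\rangle$ to be carried to the same subgroup of $\outm$ by $\phi|_{W_n}$ and $\phi|_{G_n}$ only up to the outer ``$\det$-twisting'' automorphism of $S_n\times\Z/2$, and that twist is incompatible with $\phi$ being injective on $W_n$. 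Settling this point is where the real work lies; the rest is bookkeeping to confirm that the finite-subgroup analysis of the previous two sections used nothing about $\out$ beyond the subgroups $W_n$, $G_n$ and their actions on admissible graphs.
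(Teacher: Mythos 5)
Your reduction from ``non-injective on $W_n$ or $G_n$'' to ``$\Delta$ dies'' via the normal subgroup structure of the two finite groups is correct, and you have put your finger on exactly the right pressure point: the paper justifies this theorem only by asserting that the proof of Corollary \ref{c:oddDone} ``had not used the ambient structure of $\out$'', and the one place where that assertion fails is the appeal to Lemma \ref{det} (whose proof uses that $\det$ is the \emph{only} surjection $\out\to\Z/2$) to kill the case $X=R_{2n}$, $Y=C_{n+1}\vee C_{n+1}$ in Proposition \ref{2n}. For $n<m<2n$ this case cannot arise, since $C_{n+1}\vee C_{n+1}$ has genus $2n$ and the $\dim V_{-1}(\sigma)$ count for $\sigma=(12)(34)$ already forces $X_A=R_n$ and $Y_B\in\{C_{n+1},R_{n+1}\}$; in that range your argument is complete. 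The problem sits at $m=2n$, and neither of your proposed repairs works there.

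Fix 1 assumes the $\e_i$ preserve the two rose factors of $R_{2n}$, quoting the statement of Proposition \ref{p:Wn}(3); but Case 10 of the proof of that proposition --- and the $R_{2n}$ case of Proposition \ref{2n} itself --- allow the action in which $\e_i$ interchanges $e_i$ with $e_i'$ and fixes all other petals, which is a perfectly good faithful minimal $W_n$-action. There $\Delta$ acts on $H_1$ by the block swap $\left(\begin{smallmatrix}0&I\\ I&0\end{smallmatrix}\right)$, the very matrix $D_Y$, so the covolume comparison is vacuous. Fix 2's premise is also false: for that action the restriction of the $H_1$-representation to $S_n\times\langle\Delta\rangle$ is the permutation module on $\{1,\dots,n\}\times\{\pm\}$, which is exactly what $C_{n+1}\vee C_{n+1}$ gives; no $\det$-twist appears. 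Worse, the obstruction cannot be repaired: marking $R_{2n}$ by $e_i\mapsto x_i$, $e_i'\mapsto x_i'$ and $C_{n+1}\vee C_{n+1}$ by $f_i\bar f_0\mapsto x_i$, $f_i'\bar f_0'\mapsto x_i'$, the two actions of $S_n\times\langle\Delta\rangle$ induce the \emph{identical} automorphisms of $F_{2n}$ (namely $\sigma\colon x_i\mapsto x_{\sigma(i)}$, $x_i'\mapsto x_{\sigma(i)}'$ and $\Delta\colon x_i\leftrightarrow x_i'$), so by the universal property of the amalgam these two faithful realizations glue to a homomorphism $P_n\to{\rm Out}(F_{2n})$ that is injective on both $W_n$ and $G_n$ and does not kill $\Delta$. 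Unless I am overlooking a constraint, the statement therefore fails at $m=2n$ and should be asserted only for $n<m<2n$, where your proof goes through; the paper's own one-line justification has the same gap, which it inherits from its unremarked use of Lemma \ref{det}.
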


In the case where $n$ is odd, our proof of Theorem C does not imply
an analogue of Theorem \ref{t:amalgam} because the proof of
Proposition \ref{p:atmost2} relies heavily on the ambient structure
of $\out$ and in particular on its low dimensional representation
theory. That proof begs the question of whether  a closer study of the
representation theory of $\out$, extending Theorem 3.1 of \cite{Rapinchuk} 
and paying particular attention to the
$-1$ eigenspaces of the $\e_i$ and $\Delta$, might allow one to
improve the bound $m\le 2n$ in Theorem C without having to classify all homomorphisms
$W_n ,G_n\to{\rm{Out}}(F_m)$ in the expanded range.
 This idea is pursued by Dawid Kielak in his Oxford
doctoral thesis, cf.~\cite{kielak}.
   
\section{Appendix: Characteristic covers}

The method that we used in
the first part of this paper to construct monomorphisms $\out\hookrightarrow{\rm{Out}}(F_{m})$ was this: we took
a characteristic subgroup of finite index in $F_n$
that contains the commutator subgroup and  split the short exact
sequence $1\to F_n/N\to \aut/N\to \out \to 1$. But in truth it was not this sequence {\em per se} that we split, but rather an
isomorphic sequence involving groups of homotopy equivalences of graphs. The purpose of this appendix is to prove the following
theorem, which
explains why these two splitting problems are equivalent.

\smallskip

\noindent{\em{Notation.}} Let $X$ be a connected
CW complex $X$ with basepoint $x_0$,  let $\he(X)$ be the set of homotopy equivalences $X\to X$, with the compact-open topology, and let $\he_0(X)\subset\he(X)$ be those that fix  $x_0$.  Define
$$\HE(X) = \pi_0(\he(X))\ \  \text{   and   }\ \
\HE_\bullet(X) = \pi_0(\he_0(X)).$$
Thus $\HE(X)$ is the group of homotopy classes of self-homotopy equivalences of $X$, and $\HE_\bullet(X)$ is the group of homotopy classes rel $x_0$ of basepoint-preserving self-homotopy equivalences of $X$. Let $\iota: \HE_\bullet(X)\to \HE(X)$ be the map induced by $\he_0(X)\hookrightarrow\he(X)$.

\smallskip

Given a connected covering space  $p\colon\widehat X\to X$, we define $\fhe(\widehat X)$ to be the set of self-homotopy equivalences $\hat h:\widehat X
\to\widehat X$
that are fibre-preserving, i.e. if $p(\widehat x)=p(\widehat y)$, then $p\widehat h(\widehat x) = p\widehat h(\widehat y)$.
Consider the group
$$\FHE(\widehat X) = \pi_0(\fhe(\widehat X)).
$$

\begin{theorem}\label{diagram}  Let $X$ be a connected CW complex with basepoint $x_0\in X$.  Let $N<\pi = \pi_1(X,x_0)$ be a characteristic subgroup and suppose that the centralizer
$Z_\pi(N)$ is trivial.  Let $p\colon\widehat X\to X$ be the covering corresponding to $N$ and fix $\widehat x_0\in p^{-1}(x_0)$.  Then there is a homomorphism $\delta: \pi\to \HE_\bullet(X)$ and a commutative diagram of groups
$$
\begin{matrix}
&&1&&1\\
&&\downarrow&&\downarrow\\
&&N&=&N\\
&&\downarrow&&\enspace\enspace\downarrow\\ 
1&\to&\pi&\buildrel{\delta}\over\to&\HE_\bullet(X)&\buildrel{\iota}\over\to&\HE(X)&\to&1\\
&&\downarrow&&\enspace\enspace\downarrow{\lambda}&&\| \\
1&\to&\deck&\to&\FHE(\widehat X)&\buildrel{p_*}\over\to&\HE(X)&\to&1\\
&&\downarrow&&\downarrow\\
&&1&&1\\
\end{matrix}
$$
where $\deck\cong \pi/N$ is the group of deck transformations of $p:\widehat X\to X$ and where
$\lambda([h])$ is defined to be the class of the lift of $h$ that fixes $\widehat x_0$.
\end{theorem}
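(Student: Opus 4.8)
The plan is to construct the homomorphism $\delta$ explicitly, verify exactness of the middle row, and then transport everything across the vertical isomorphism $\lambda$ to obtain the bottom row. First I would define $\delta:\pi\to\HE_\bullet(X)$ as follows: for $\gamma\in\pi=\pi_1(X,x_0)$, the basepoint-change map along a loop representing $\gamma$ gives an automorphism of $\pi$; but more geometrically, $\gamma$ acts on the universal cover $\widetilde X$ as a deck transformation, and one uses this to produce a basepoint-preserving self-homotopy equivalence of $X$ well-defined up to homotopy rel $x_0$. Concretely, $\delta(\gamma)$ is the homotopy equivalence whose induced map on $\pi$ is conjugation by $\gamma$, normalised so as to fix $x_0$; that such a representative exists and is unique up to homotopy rel $x_0$ is the standard fact that $\HE_\bullet(X)\to\mathrm{Aut}(\pi)$ (for aspherical $X$) or more generally the relevant map is controlled by $[\gamma]$. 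I would check $\delta$ is a homomorphism by composing basepoint-change maps.

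Next I would establish that the middle row $1\to\pi\xrightarrow{\delta}\HE_\bullet(X)\xrightarrow{\iota}\HE(X)\to 1$ is exact. Surjectivity of $\iota$ is a lemma of Whitehead type: any self-homotopy equivalence is homotopic (freely) to one fixing $x_0$, since $X$ is connected and we may drag the image of $x_0$ back along a path. For the kernel, $[h]\in\ker\iota$ means a basepoint-preserving $h$ is freely homotopic to the identity; tracking the basepoint during such a free homotopy produces a loop $\gamma$, and the standard analysis shows $h$ is homotopic rel $x_0$ to $\delta(\gamma)$, so $\ker\iota=\operatorname{im}\delta$. Injectivity of $\delta$ is exactly where the hypothesis $Z_\pi(N)=1$ — actually here just $Z_\pi(\pi)=1$ for the middle row, with the $N$-refinement needed for the columns — enters: $\delta(\gamma)$ is trivial in $\HE_\bullet(X)$ iff conjugation by $\gamma$ is inner-trivial in the appropriate sense, i.e. $\gamma$ is central; so triviality of the relevant centralizer gives injectivity.

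Then I would handle the two short exact columns. The left column is tautological: $N\hookrightarrow\pi\twoheadrightarrow\pi/N=\deck$. For the middle column I define $\lambda:\HE_\bullet(X)\to\FHE(\widehat X)$ by sending $[h]$ to the class of the unique lift $\widehat h$ fixing $\widehat x_0$; this is well-defined because $N$ is characteristic (so $h_*(N)=N$, guaranteeing a lift exists) and $\widehat X$ is connected (so the lift fixing $\widehat x_0$ is unique), and homotopies rel $x_0$ lift to homotopies fixing $\widehat x_0$. One checks $\lambda$ is a homomorphism via uniqueness of lifts. The key claims are then: (i) $\ker\lambda$ consists of those $\delta(\gamma)$ whose lift fixing $\widehat x_0$ is fibre-homotopic to the identity, which forces $\gamma\in N$, giving $\ker\lambda=\delta(N)$ and identifying the middle column with $N\hookrightarrow\HE_\bullet(X)\twoheadrightarrow\HE_\bullet(X)/\delta(N)$; (ii) $\lambda$ descends to an isomorphism $\HE(X)=\HE_\bullet(X)/\delta(\pi)\xrightarrow{\cong}\HE(X)$ on the quotients — in fact the identity, which is the content of the right column being $=$. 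The bottom row exactness and the compatibility $p_*\circ\lambda=\iota$ then follow formally by a diagram chase, once one knows $\lambda$ restricts to the natural isomorphism $N/\ker\lambda\cong\deck$ — but wait, that is the quotient, not $\ker$: more precisely $\lambda$ sends $\delta(\pi)$ onto $\deck\subset\FHE(\widehat X)$ (lifts of the identity of $X$ are exactly the deck transformations) with kernel $\delta(N)$, which is the left-column and bottom-left-column data.

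I expect the main obstacle to be proving that $\lambda:\FHE$-side is surjective and that its kernel is exactly $\deck$ — equivalently, showing every fibre-preserving self-homotopy equivalence of $\widehat X$ descends (up to fibre-homotopy) to a self-homotopy equivalence of $X$, and conversely identifying when two lifts agree. Surjectivity onto $\FHE(\widehat X)$ of $\lambda$ composed appropriately requires: given $\widehat h\in\fhe(\widehat X)$, fibre-preservation lets us define $h:X\to X$ by $h(p(\widehat x))=p(\widehat h(\widehat x))$, check $h$ is a homotopy equivalence (using that $\widehat h$ is and a transfer/covering argument), adjust by a deck transformation to fix $\widehat x_0$, and verify the resulting class is independent of choices. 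The subtlety is that $\fhe(\widehat X)$ is only required to be fibre-preserving set-theoretically, not to come equipped with a map downstairs, and one must produce the downstairs map and control the ambiguity precisely by $\deck$; the centralizer hypothesis $Z_\pi(N)=1$ is what makes this ambiguity analysis rigid, by ensuring a lift of the identity that fixes a point is the identity and more generally pinning down lifts. Everything else — the top and bottom $1$'s, commutativity of the squares, the homomorphism properties — is routine once these identifications are in place.
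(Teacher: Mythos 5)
Your overall architecture matches the paper's: construct $\delta$, prove exactness of the two rows separately, define $\lambda$ by the basepoint-fixing lift, verify $\lambda\circ\delta=[\dd(\cdot)]$ and $p_*\circ\lambda=\iota$, and get exactness of the middle column by a diagram chase. But your definition of $\delta$ has a genuine gap. You define $\delta(\gamma)$ as ``the homotopy equivalence whose induced map on $\pi$ is conjugation by $\gamma$, normalised so as to fix $x_0$,'' and justify uniqueness by appeal to $\HE_\bullet(X)\to\mathrm{Aut}(\pi)$. That map is an isomorphism only when $X$ is aspherical; the theorem is stated for an arbitrary connected CW complex, where a basepoint-preserving homotopy equivalence is not determined up to based homotopy by its action on $\pi_1$. (Your alternative suggestion --- that $\gamma$ ``acts on the universal cover as a deck transformation'' and this produces an element of $\HE_\bullet(X)$ --- does not work either: a deck transformation covers the identity of $X$, so it yields nothing downstairs.) The construction you actually need, and which the paper uses, is the drag-back: apply the homotopy extension property to get $H:X\times I\to X$ with $H_0=\mathrm{id}_X$ and $H(x_0,t)=\sigma(1-t)$, and set $\delta([\sigma])=[H_1]$; one then checks well-definedness, the homomorphism property, and --- for injectivity --- that a based nullhomotopy of $\delta([\gamma])$ yields, for every loop $\tau$, a map of the square showing $[\tau]$ and $[\gamma]$ commute, so $[\gamma]\in Z(\pi)\subset Z_\pi(N)=1$. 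Your ``iff ... conjugation by $\gamma$ is inner-trivial'' shortcut again silently assumes asphericity. You do invoke the drag-back correctly when proving $\ker\iota=\operatorname{im}\delta$, so the fix is to promote that construction to the definition.

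One further remark on emphasis: you identify surjectivity of $\lambda$ as the main obstacle, but in the paper this is purely formal once the two rows are exact and the square commutes (lift $p_*[\widehat h]$ through $\iota$, compare with $[\widehat h]$ via $p_*$, and correct by a deck transformation, which lies in $\operatorname{im}(\lambda\circ\delta)$). The genuinely non-formal inputs are exactly the ones you mention in passing: that $\deck\to\FHE(\widehat X)$ is injective (via the injectivity of $\pi/N\to\mathrm{Out}(N)$, which is where $Z_\pi(N)=1$ is used), and that a homotopy of $p_*\widehat h$ to the identity lifts to a fibre-preserving homotopy, so $\ker p_*=\deck$.
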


The proof of the above theorem involves little more
than the homotopy extension property, the homotopy lifting property, and some  thought about the role of basepoints. But we found it hard to track down precise
references for the relevant facts (although much of what we need is in \cite{SJ}).
We therefore provide a complete proof. We require three lemmas, the first of
which involves the map $\delta : \pi_1(X,x_0)\to \HE_\bullet(X)$ that is defined as follows.

Let $I=[0,1]$. Given any continuous map $h:X\to X$ and any path $\sigma:I\to X$ from $x_0$ to $h(x_0)$, we apply the homotopy extension principle to
obtain a map $H : X\times [0,1]\to X$ with $H|_{X\times\{0\}}=h$ and $H(x_0,t)=\sigma(1-t)$. (Here $\sigma$ is viewed
as a homotopy of a point.) Define $d(h,\sigma):X\to X$
to be the restriction of $H$ to $X\times\{1\}$; it is thought of as the map obtained from
$h$ by ``dragging $h(x_0)$ back to $x_0$ along $\sigma$".
Note that $h$ is freely homotopic to $d(h,\sigma)$. Note too that a further application of homotopy extension shows that
a different choice of homotopy $H'$ would lead to a map $d'(h,\sigma)$ that is homotopic to $d(h,\sigma)$ rel $x_0$.

If $\sigma\simeq \sigma'$ rel endpoints, then by a further application of homotopy
extension we see that $d(h,\sigma)\simeq d(h,\sigma')$ rel $x_0$. In particular, if $\sigma$ is a loop based at $x_0$,
then the based homotopy class of $d({\rm{id}}_X, \sigma)$ depends only on $[\sigma]\in\pi_1(X,x_0)$.
Thus we obtain a well-defined map $\delta : \pi_1(X,x_0)\to \HE_\bullet(X)$ by defining $\delta([\sigma]):=[d({\rm{id}}_X, \sigma)]$.
And because we dragged backwards along $\sigma$
in the definition of $d(h,\sigma)$, this is a homomorphism.

\begin{lemma}\label{l:exactone} Let $\pi=\pi_1(X,x_0)$ and suppose the center $Z(\pi)$ is trivial.  Then the following sequence is exact:
$$1 \to \pi \buildrel{\delta}\over\to \HE_\bullet(X)\buildrel{\iota}\over\to \HE(X)\to 1.$$
\end{lemma}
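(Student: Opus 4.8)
The plan is to verify the three assertions that exactness of $1\to\pi\xrightarrow{\delta}\HE_\bullet(X)\xrightarrow{\iota}\HE(X)\to 1$ amounts to: that $\iota$ is onto $\HE(X)$, that $\operatorname{im}\delta=\ker\iota$, and that $\delta$ is injective. Throughout I will use only the properties of the construction $d(h,\sigma)$ recorded above, the homotopy extension property, and the standard behaviour of $\pi_1$ under a free homotopy with prescribed basepoint track. Recall also that $\delta$ is a homomorphism — this is precisely why $d(h,\sigma)$ was built by dragging the image of the basepoint \emph{backwards} along $\sigma$ — so ``$\delta$ injective'' is the same as ``$\ker\delta$ trivial''.

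\emph{Surjectivity of $\iota$ and $\iota\circ\delta=1$.} Given a homotopy equivalence $h\colon X\to X$, choose a path $\sigma$ from $x_0$ to $h(x_0)$; then $d(h,\sigma)$ fixes $x_0$ and is freely homotopic to $h$, hence is again a homotopy equivalence, so it represents a class in $\HE_\bullet(X)$ with $\iota[d(h,\sigma)]=[h]$. Taking $h=\mathrm{id}_X$ and $\sigma$ a loop at $x_0$ shows $\iota\,\delta([\sigma])=[d(\mathrm{id}_X,\sigma)]=1$ in $\HE(X)$, since $d(\mathrm{id}_X,\sigma)$ is freely homotopic to $\mathrm{id}_X$ by construction.

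\emph{$\ker\iota\subseteq\operatorname{im}\delta$.} Suppose $h\colon(X,x_0)\to(X,x_0)$ is freely homotopic to $\mathrm{id}_X$, say by $G\colon X\times I\to X$ with $G_0=h$ and $G_1=\mathrm{id}_X$. Since $h(x_0)=x_0=\mathrm{id}_X(x_0)$, the basepoint track $\tau(s):=G(x_0,s)$ is a loop at $x_0$. Running $G$ backwards, $(x,s)\mapsto G(x,1-s)$, gives a homotopy whose value at time $0$ is $\mathrm{id}_X$, whose value at time $1$ is $h$, and whose basepoint track is $s\mapsto\tau(1-s)$ — exactly the data defining $d(\mathrm{id}_X,\tau)$. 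By the remark above that $d(-,-)$ is determined up to homotopy rel $x_0$ by the starting map and the basepoint track, we get $h\simeq_{x_0}d(\mathrm{id}_X,\tau)$, so $[h]=\delta([\tau])\in\operatorname{im}\delta$.

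\emph{Injectivity of $\delta$ — the main obstacle.} This is the only place the hypothesis $Z(\pi)=1$ enters, and the step that needs the most care. Suppose $\delta([\sigma])=1$, i.e.\ $d(\mathrm{id}_X,\sigma)\simeq_{x_0}\mathrm{id}_X$; then the induced automorphism $(d(\mathrm{id}_X,\sigma))_*$ of $\pi=\pi_1(X,x_0)$ is the identity. On the other hand $d(\mathrm{id}_X,\sigma)$ is freely homotopic to $\mathrm{id}_X$ through the homotopy used in its construction, whose basepoint track is $\bar\sigma$, so the standard change-of-basepoint-along-a-track formula identifies $(d(\mathrm{id}_X,\sigma))_*$ with conjugation by $[\sigma]$ (up to replacing $[\sigma]$ by its inverse, according to one's conventions). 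Hence conjugation by $[\sigma]$ is trivial, so $[\sigma]\in Z(\pi)=1$ and $[\sigma]=1$. The delicate point is simply to pin down the basepoint track of the construction $d$ correctly and to match signs in the track-conjugation formula; I would isolate the latter as a one-line sublemma (or cite the standard reference, e.g.\ Hatcher) rather than reprove it, and then the three parts together give the asserted exact sequence.
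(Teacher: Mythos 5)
Your proposal is correct and follows essentially the same route as the paper: the same four verifications, with identical arguments for surjectivity of $\iota$ and for ${\rm{im}}(\delta)=\ker(\iota)$. The only difference is in the injectivity of $\delta$, where you cite the standard basepoint-track conjugation formula to identify $(d({\rm{id}}_X,\sigma))_*$ with conjugation by $[\sigma]$, whereas the paper proves exactly this fact in situ by mapping the square $(s,t)\mapsto F(\tau(s),t)$ into $X$ to show $[\tau]$ and $[\sigma]$ commute for every $\tau$ --- the same content, just self-contained versus outsourced.
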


\begin{proof}
Given $h\in \HE(X)$, we choose a path $\sigma$ from $x_0$ to $h(x_0)$.
By construction, $d(h,\sigma)$ fixes $x_0$ and is freely homotopic to $h$.  Thus $\iota$ is surjective.   

To see that  ${\rm{im}}(\delta)\subset {\rm{ker}}(\iota)$, note that  the homotopy used to define $\delta([\sigma])$ gives a (free) homotopy
from $\delta([\sigma])$ to ${\rm{id}}_X$.  To establish the opposite inclusion, we
fix $h\in {\rm{ker}}(\iota)$ and choose a homotopy  $G$ of $h$ to the identity. Let $\sigma(t)=G(x_0,1-t)$.  Then, by definition,
$\delta([\sigma]) = h$.

To see that $\delta$ is injective, fix  a loop $\gamma$ and
suppose   that $\delta([\gamma])$ is trivial, i.e. that there is a basepoint preserving homotopy from $d({\rm{id}}_X,\gamma)$
to ${\rm{id}}_X$.  By combining
 this  homotopy  with the homotopy $H$ used to define $d({\rm{id}}_X,\gamma)$, we get a homotopy $F:X\times [-1,1]\to X$
from ${\rm{id}}_X$ to itself with $F|_{\{x_0\}\times [0,1]}=\gamma$ and $F|_{\{x_0\}\times [-1,0]}$ a 
constant path at $x_0$; let $\gamma'
:[-1,1]\to X$ be this
reparameterisation of $\gamma$.
Given any
loop $\tau$ based at $x_0$, the map $I\times [-1,1]\to X$ defined by $(s,t)\mapsto F(\tau(s),t)$ restricts on the top and bottom of the square to $\tau$ and
on the two sides to $\gamma'$. Thus  $[\tau]$ and $[\gamma']=[\gamma]$ commute in $\pi_1(X,x_0)$.
Since $\tau$ is arbitrary, we conclude that  $[\gamma]$ is in the centre of $\pi$, which is trivial  by hypothesis.  
\end{proof}

Now let  $p\colon\widehat X\to X$ be  a connected normal covering space, fix  $\widehat x_0$
with $p(\widehat x_0) = x_0$, and
let $N=p_*\pi_1(\widehat X,\widehat x_0)$.  
If $N$ is a characteristic subgroup of $\pi_1(X,x_0)$, then we say that the covering is
{\it characteristic}.

\begin{lemma}\label{l:exacttwo}  Let $p\colon\widehat X\to X$ be a characteristic covering space, with group of deck transformations $\deck
= \pi_1(X,x_0)/N$, and assume that the centralizer of $N$ in $\pi_1(X,x_0)$ is trivial.  Then the following sequence is exact:
$$1\to   \deck \buildrel{}\over\to  \FHE(\widehat X) \buildrel{p_*}\over\to \HE(X)\to 1.$$
\end{lemma}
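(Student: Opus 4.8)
The plan is to verify exactness at each of the three positions $\deck$, $\FHE(\widehat X)$ and $\HE(X)$ separately, using only the standard covering-space toolkit: the lifting criterion, unique path- and homotopy-lifting, and the fact that, the cover being normal, the deck group acts transitively on each fibre (so $p_*\pi_1(\widehat X,\widehat x_1)$ is independent of the choice of $\widehat x_1$ in a fibre). First I would check that $p_*$ makes sense: a fibre-preserving $\hat h\in\fhe(\widehat X)$ descends to a map $h$ of $X$ with $h\circ p=p\circ\hat h$; it is a homotopy equivalence because $\hat h$ is, and a fibre-preserving homotopy of $\widehat X$ descends to a homotopy of $X$, so $p_*[\hat h]:=[h]$ is a well-defined homomorphism. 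The lifting argument used below for surjectivity also shows that every fibre-preserving homotopy equivalence admits a fibre-preserving homotopy inverse, so that $\FHE(\widehat X)$ is genuinely a group.

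\emph{Surjectivity of $p_*$.} Given $h\in\he(X)$, I would lift $h\circ p\colon\widehat X\to X$ to $\widehat X$. The only delicate point is that $h$ need not fix $x_0$: choosing a path $w$ from $x_0$ to $h(x_0)$, one checks that $h_*$ carries $N=p_*\pi_1(\widehat X,\widehat x_0)$ into $p_*\pi_1(\widehat X,\widehat x_1)$ for some $\widehat x_1$ over $h(x_0)$, and this holds because $N$ is \emph{characteristic} (so it is preserved by the automorphism of $\pi_1(X,x_0)$ obtained by following $h_*$ with the change-of-basepoint isomorphism $w_\#$) and \emph{normal} (so the target subgroup is the same for every $\widehat x_1$ over $h(x_0)$). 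The resulting lift $\hat h$ is fibre-preserving: two points of a common fibre differ by a deck transformation $\tau$, and $\hat h\tau$ is again a lift of $h\circ p$, hence equals $\tau'\hat h$ for some $\tau'\in\deck$, so $p\hat h\tau=p\tau'\hat h=p\hat h$. Finally $\hat h$ is a homotopy equivalence: lifting a homotopy inverse $g$ of $h$ to a (fibre-preserving) $\hat g$ and then lifting homotopies $gh\simeq{\rm{id}}_X$ and $hg\simeq{\rm{id}}_X$ shows $\hat g\hat h$ and $\hat h\hat g$ are homotopic to deck transformations, whence $\hat h$ has a two-sided homotopy inverse by the usual ``left inverse $\simeq$ right inverse'' argument.

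\emph{Exactness at $\FHE(\widehat X)$.} Deck transformations cover ${\rm{id}}_X$, so $\deck\subseteq\ker p_*$. Conversely, if $p_*[\hat h]=1$, fix a homotopy $G\colon X\times I\to X$ from $h$ to ${\rm{id}}_X$ and lift $G\circ(p\times{\rm{id}})$ by the homotopy lifting property, starting from $\hat h$ at time $0$, to obtain $\widehat G\colon\widehat X\times I\to\widehat X$ with $\widehat G_0=\hat h$. Since $p\widehat G_t=G_t\circ p$ depends only on the $p$-class of its argument, each stage $\widehat G_t$ is fibre-preserving (and homotopic to $\hat h$, hence a homotopy equivalence), while $\widehat G_1$ is a lift of $p$ through $p$, i.e.\ a deck transformation. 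Thus $[\hat h]\in\deck$ inside $\FHE(\widehat X)$, giving $\ker p_*=\deck$.

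\emph{Injectivity of $\deck\hookrightarrow\FHE(\widehat X)$} is the heart of the lemma, and the only place where the hypothesis $Z_\pi(N)=1$ is used; I expect it to be the main obstacle. Suppose $\gamma\in\deck$ is homotopic to ${\rm{id}}_{\widehat X}$ via $H$ (fibre-preservingly, a fortiori freely). Then $\alpha(t):=p\,H(\widehat x_0,t)$ is a loop at $x_0$ whose class in $\pi/N\cong\deck$ determines $\gamma$. For any loop $\tau$ at $x_0$ with $[\tau]\in N$ --- equivalently, one that lifts to a loop $\widehat\tau$ at $\widehat x_0$ --- the square $(s,t)\mapsto p\,H(\widehat\tau(s),t)$ has $\tau$ on its top and bottom edges and $\alpha$ on both sides, so $[\tau]$ and $[\alpha]$ commute in $\pi_1(X,x_0)$. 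Hence $[\alpha]$ centralises $N=p_*\pi_1(\widehat X,\widehat x_0)$, so $[\alpha]\in Z_\pi(N)=1$, forcing $[\alpha]\in N$ and $\gamma={\rm{id}}$. Assembling the three exactness statements completes the proof.
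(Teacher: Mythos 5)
Your proof is correct. The surjectivity of $p_*$ and the identification of $\ker p_*$ with the image of $\deck$ are handled exactly as in the paper (the paper is terser about the basepoint bookkeeping in the lifting criterion, which you spell out, and it likewise lifts the homotopy to the identity to see that $\ker p_*$ consists of deck transformations). The one genuinely different step is the injectivity of $\deck\to\FHE(\widehat X)$, which is the only place the hypothesis on $Z_\pi(N)$ enters. The paper argues algebraically: triviality of the centralizer makes the conjugation map $\pi\to{\rm{Aut}}(N)$ injective, hence $\pi/N\to{\rm{Out}}(N)$ injective, and this map factors as $\deck\to\FHE(\widehat X)\to{\rm{Out}}(N)$, so the first arrow must be injective. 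You instead argue geometrically upstairs: a free homotopy from a deck transformation $\gamma$ to ${\rm{id}}_{\widehat X}$ projects to a loop $\alpha$ at $x_0$ whose class determines $\gamma$ in $\pi/N$, and the commuting-square trick shows $[\alpha]$ centralizes $N$, hence is trivial. This is precisely the argument the paper uses to prove injectivity of $\delta$ in Lemma~\ref{l:exactone}, transplanted to the cover; it has the mild advantage of showing that a nontrivial deck transformation is not even \emph{freely} homotopic to the identity, whereas the paper's route trades that geometric content for a two-line algebraic deduction from the same hypothesis. Both arguments are sound.
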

\begin{proof}  
Every $h\in\he(X)$ lifts to a self-homotopy equivalence $\widehat h\in \fhe(\widehat X)$, because $N$ is characteristic and therefore $h_*(N)=N$.
Thus $p_*: \FHE(\widehat X) \to \HE(X)$ is surjective.

Let $\pi= \pi_1(X,x_0)$. The map $\pi\to {\rm{Aut}}(N)$ defined by conjugation is injective, because we have assumed that the
centralizer  of $N$ in $\pi$ is trivial. It follows that the induced map ${\rm{ad}}: \pi/N\to {\rm{Out}}(N)$ is also injective. But ${\rm{ad}}$
is the natural map from $\deck=\pi/N$ to ${\rm{Out}}(\pi_1(\widehat X, \widehat x_0)) = {\rm{Out}}(N)$ (where the identifications
are given by path-lifting). And $\deck\to {\rm{Out}}(\pi_1(\widehat X, \widehat x_0))$ extends to $\HE(\widehat X)\to {\rm{Out}}( N)$.
Therefore $\deck\to\FHE(\widehat X)$ is injective. Moreover, it is clear that the image of this map
is contained  in ${\rm ker}(p_*)$; this just says that deck transformations project to the identity.  Conversely, if $p_* \widehat h$ is homotopic to the identity,  the homotopy can be lifted to a fiber-preserving homotopy of $\widehat h$ which covers the identity; but the only maps which cover the identity are deck transformations.
\end{proof}

We are studying the normal covering $p:\widehat X\to X$. Path-lifting at the basepoint $\widehat x_0\in p^{-1}(x_0)$ gives the standard identification $\deck\cong \pi_1(X,x_0)/N$; we write $\dd(\gamma)$ for the deck transformation determined by $\gamma$, and we write $[\dd(\gamma)]$ for its image in $\FHE(\widehat X)$. The homomorphism $\lambda : \HE_{\bullet}(X)\to \FHE(\widehat X)$ was defined in  the statement of  Theorem \ref{diagram}: it sends $[h]$  to  the fibre-preserving homotopy class of the lift of $h$ that fixes $\widehat x_0\in\widehat X$. The homomorphism $\delta : \pi_1(X,x_0)\to \HE_{\bullet}(X)$ was defined prior to
Lemma~\ref{l:exactone}.

\begin{lemma}\label{l:commSq} For all $\gamma\in\pi_1(X,x_0)$ we have $\lambda(\delta(\gamma)) = [\dd(\gamma)]$.
\end{lemma}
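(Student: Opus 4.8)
The plan is to lift the homotopy that defines $d({\rm id}_X,\sigma)$ to $\widehat X$ and track what it does to the basepoint $\widehat x_0$. Fix a loop $\sigma$ at $x_0$ representing $\gamma$, and recall the map $H:X\times I\to X$ built by homotopy extension in the definition of $d({\rm id}_X,\sigma)$, so that $H_0={\rm id}_X$, $H_1=d({\rm id}_X,\sigma)$, and $H(x_0,t)=\sigma(1-t)$. Applying the homotopy lifting property of $p$ to $H\circ(p\times{\rm id}_I):\widehat X\times I\to X$, whose value at $t=0$ is the map $p$, I obtain the unique lift $\widehat H:\widehat X\times I\to\widehat X$ with $\widehat H_0={\rm id}_{\widehat X}$. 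For each $t$ one has $p\circ\widehat H_t=H_t\circ p$, so $\widehat H_t$ is a lift of $H_t:X\to X$, hence fibre-preserving; and since $\widehat H$ restricts to a homotopy from ${\rm id}_{\widehat X}$ to $\widehat H_t$, each $\widehat H_t$ is a homotopy equivalence. Thus $\widehat H$ is a path in $\fhe(\widehat X)$ showing that $[\widehat H_1]=1$ in $\FHE(\widehat X)$, where $\widehat H_1$ is a fibre-preserving lift of $d({\rm id}_X,\sigma)$.

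Next I would identify which lift $\widehat H_1$ is by evaluating at $\widehat x_0$. The path $t\mapsto\widehat H(\widehat x_0,t)$ is the lift starting at $\widehat x_0$ of $t\mapsto H(x_0,t)=\sigma(1-t)$, that is, of the reverse loop $\bar\sigma$, which represents $\gamma^{-1}$. By the very definition of the correspondence $\gamma\mapsto\dd(\gamma)$ through path-lifting at $\widehat x_0$, the endpoint of this path is $\dd(\gamma^{-1})(\widehat x_0)=\dd(\gamma)^{-1}(\widehat x_0)$. Hence $\widehat H_1(\widehat x_0)=\dd(\gamma)^{-1}(\widehat x_0)$.

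Finally I would compare with $\lambda(\delta(\gamma))$. By definition $\lambda(\delta(\gamma))=[\widehat g]$, where $\widehat g$ is the lift of $d({\rm id}_X,\sigma)$ fixing $\widehat x_0$. Since $\widehat X$ is connected and the covering is normal, the two lifts $\widehat g$ and $\widehat H_1$ of the same map differ on the left by a deck transformation, $\widehat H_1=\tau\circ\widehat g$; evaluating at $\widehat x_0$ gives $\tau(\widehat x_0)=\widehat H_1(\widehat x_0)=\dd(\gamma)^{-1}(\widehat x_0)$, so $\tau=\dd(\gamma)^{-1}$ and $\widehat g=\dd(\gamma)\circ\widehat H_1$. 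Passing to $\FHE(\widehat X)$ and using $[\widehat H_1]=1$, I get $\lambda(\delta(\gamma))=[\widehat g]=[\dd(\gamma)]\,[\widehat H_1]=[\dd(\gamma)]$, as required.

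The only point that needs care is the direction in which deck transformations appear: the inverse produced by lifting the \emph{reverse} loop $\bar\sigma$ must cancel the inverse picked up when comparing the two lifts, which is precisely the role of the ``dragging backwards along $\sigma$'' built into the definition of $\delta$. Beyond this bookkeeping I do not expect any genuine obstacle, since the rest is a formal consequence of the homotopy lifting property and the uniqueness of lifts.
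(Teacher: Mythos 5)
Your proof is correct and follows essentially the same route as the paper: both lift the defining homotopy $H$ to a fibre-preserving homotopy $\widehat H$ and track the basepoint along $\widehat\sigma$. The only difference is that you anchor the lift at $t=0$ (so $\widehat H_0={\rm id}_{\widehat X}$) and then compare $\widehat H_1$ with the basepoint-fixing lift via a deck transformation, whereas the paper anchors at $t=1$ and reads off $\dd(\gamma)$ directly as the time-$0$ slice.
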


\begin{proof} Fix a loop $\sigma [0,1]\to X$ with $[\sigma]=\gamma$ in $\pi_1(X, x_0)$. The construction of $\delta(\gamma)$ involves a homotopy $H:X\times [0,1]\to X$
with $H(x_0,t) = \sigma(1-t)$ and $H|_{X\times\{0\}}={\rm{id}}_X$ while $h_1:=H|_{X\times\{1\}} \in \delta(\gamma)$. By definition, $\lambda(\delta(\gamma))$ is the
fibre-preserving homotopy class of the lift $\widehat h_1$ of $h_1$ that fixes $\widehat x_0$. Now $H$ lifts to a fibre-preserving homotopy $\widehat H: \widehat X\times [0,1]
\to \widehat X$ with $\widehat H_{\widehat X\times\{1\}} = \widehat h_1$ and $\widehat H(\widehat x_0,t) = \widehat\sigma(1-t)$, where $\widehat\sigma : [0,1]\to \widehat X$
is the lift of $\sigma$ with $\widehat\sigma(0)=\widehat x_0$. Thus $\widehat H|_{\widehat X\times\{0\}}$ is the lift of ${\rm{id}}_X$ that sends $\widehat x_0$ to
$\widehat\sigma(1)$. By definition, this lift of ${\rm{id}}_X$ is $\dd([\sigma])=\dd(\gamma)$. Therefore
$\widehat H$ is a fibre-preserving homotopy from $\widehat h_1$
to $\dd(\gamma)$, showing that $\lambda(\delta(\gamma)) = [\dd(\gamma)]$.
\end{proof}

\noindent{\bf Proof of Theorem~\ref{diagram}}
Lemmas~\ref{l:exactone} and \ref{l:exacttwo} tell us that the rows of the diagram are exact. It is clear from the definitions that $p_*\lambda = \iota$,
and Lemma~\ref{l:commSq} tells us that the square beneath $\pi\overset{\delta}\to \HE_{\bullet}(X)$ commutes. Thus the diagram is commutative.
With commutativity in hand, an elementary diagram chase proves that second column is exact.
\qed

\begin{corollary}\label{c:coroll}  Let $X$ be a $K(\pi,1)$ space and let $p\colon\widehat X\to X$
be a covering space with $N=p_*\pi_1(\widehat X)$ characteristic in $\pi$. If $Z_\pi(N)$ is trivial,
 then  the following diagram of groups is commutative and the vertical maps are isomorphisms:

$$
\begin{matrix}

 1&\to &\pi/N&\to &{\rm{Aut}}(\pi)/N&\to &{\rm{Out}}(\pi)\to 1\\
&&\downarrow&&\downarrow &&\downarrow\\
 1&\to &\deck&\to &\FHE(\widehat X)&\to &\HE(X)\to 1\\

\end{matrix}
$$
where
$\pi/N\to {\rm{Aut}}(\pi)/N $ is the map induced by the action of $\pi$ on itself by inner automorphisms.
\end{corollary}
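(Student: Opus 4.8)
The plan is to obtain Corollary~\ref{c:coroll} as the special case $X=K(\pi,1)$ of Theorem~\ref{diagram}, translating the groups of homotopy equivalences appearing there into algebraic ones. The only extra input is the standard dictionary for aspherical targets supplied by obstruction theory: for a $K(\pi,1)$ CW complex $X$ the rule $h\mapsto h_*$ induces an isomorphism $\HE_\bullet(X)\xrightarrow{\sim}{\rm Aut}(\pi)$, and passing to free homotopy classes --- equivalently, to conjugacy classes of endomorphisms of $\pi$ --- gives a compatible isomorphism $\HE(X)\xrightarrow{\sim}{\rm Out}(\pi)$; these intertwine $\iota\colon\HE_\bullet(X)\to\HE(X)$ with the natural projection ${\rm Aut}(\pi)\to{\rm Out}(\pi)$. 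This is exactly the identification $\HE(R)=\out$ used throughout the first part of the paper, and I would simply quote it.

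The one point that genuinely needs checking is that, under the isomorphism $\HE_\bullet(X)\cong{\rm Aut}(\pi)$, the homomorphism $\delta\colon\pi\to\HE_\bullet(X)$ constructed before Lemma~\ref{l:exactone} is carried to the conjugation homomorphism ${\rm ad}\colon\pi\to{\rm Aut}(\pi)$, $\gamma\mapsto(x\mapsto\gamma x\gamma^{-1})$. Here the ``drag backwards along $\sigma$'' convention in the definition of $d(h,\sigma)$ does the work: $d({\rm id}_X,\sigma)$ is joined to ${\rm id}_X$ by a homotopy whose trace at the basepoint is $\sigma$, and the effect on $\pi_1(X,x_0)$ of such a basepoint-moving homotopy is conjugation by $[\sigma]$; the backward-dragging is precisely what makes $\delta$ a homomorphism rather than an anti-homomorphism, so $\delta={\rm ad}$ on the nose. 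Since $Z_\pi(N)=1$ forces $Z(\pi)=1$, the map ${\rm ad}$ is injective, so I identify $\pi$ with ${\rm Inn}(\pi)\le{\rm Aut}(\pi)$ and $N$ with its image ${\rm ad}(N)$; since $N$ is characteristic, ${\rm ad}(N)$ is normal in ${\rm Aut}(\pi)$, so the quotient written ``${\rm Aut}(\pi)/N$'' in the statement is by definition ${\rm Aut}(\pi)/{\rm ad}(N)$.

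With these identifications in place, the proof is a transcription of the diagram of Theorem~\ref{diagram}. Its left column is the classical path-lifting isomorphism $\pi/N\cong\deck$; its right-hand vertical arrow is the identity once $\HE(X)$ is identified with ${\rm Out}(\pi)$; and exactness of its middle column $1\to N\to\HE_\bullet(X)\xrightarrow{\lambda}\FHE(\widehat X)\to1$ says precisely that $\lambda$ induces an isomorphism ${\rm Aut}(\pi)/N\xrightarrow{\sim}\FHE(\widehat X)$. The two squares of the resulting diagram commute because the full diagram of Theorem~\ref{diagram} does: the right square because $p_*\lambda=\iota$, and the left square because $\lambda\delta(\gamma)=[\dd(\gamma)]$ for all $\gamma$ by Lemma~\ref{l:commSq}, which on passing to the quotient by $N$ is exactly the compatibility of $\pi/N\to{\rm Aut}(\pi)/N$ with $\deck\to\FHE(\widehat X)$. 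I expect the identification $\delta={\rm ad}$ --- getting the basepoints and the direction of conjugation right --- to be the only genuinely delicate step; everything else is a quotation of standard facts together with a formal diagram chase.
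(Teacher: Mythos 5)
Your proof is correct and follows essentially the same route as the paper: identify $\HE_\bullet(X)\cong{\rm Aut}(\pi)$ and $\HE(X)\cong{\rm Out}(\pi)$ via the aspherical dictionary, observe that $\delta$ corresponds to conjugation (the "drag backwards" convention being exactly what makes it a homomorphism onto ${\rm Inn}(\pi)$), and then quotient the diagram of Theorem~\ref{diagram} by $N$. Your explicit remark that exactness of the middle column is what yields the isomorphism ${\rm Aut}(\pi)/N\to\FHE(\widehat X)$ is a slightly fuller account than the paper's one-line "factor out $N$ from the top row," but the argument is the same.
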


\begin{proof} When $X$ is a $K(\pi,1)$, the natural maps $\HE_\bullet(X)\to {\rm Aut}(\pi)$ and $\HE(X)\to{\rm Out}(\pi)$
are isomorphisms, which we use to identify these groups. By definition $\delta(\gamma)$ is the class of the homotopy equivalence that drags the basepoint of
$X$ backwards around the loop $\gamma$, and therefore the map that it induces on $\pi=\pi_1(X, x_0)$ is the inner automorphism
by $\gamma$. Thus, with the above identifications, by factoring out $N$ from the top row of the diagram in Theorem \ref{diagram}
we obtain the top row of the diagram displayed in the statement of the corollary.
\end{proof}

\bibliographystyle{siam}

\begin{thebibliography}{1}

\bibitem{ALS}
{\sc J. Aramayona, C. Leininger, and J. Souto}, {\em Injections of mapping class groups}, Geom. Topol. 13 (2009), no. 5, pp.~2523--2541.


\bibitem{BP}
{\sc O.~V. Bogopolski and D.~V. Puga}, {\em On the embedding of the outer
  automorphism group $\out$ of a free group of rank $n$
  into the group $\outm$ for $m>n$}, Algebra Logika, 41
  (2002), pp.~123--129, 253.

 \bibitem{BridsonFarb}
  {\sc M.~R. Bridson and B.~Farb}, {\em A remark about actions of lattices on free groups},  in
Geometric topology and geometric group theory  (Milwaukee, WI, 1997),
Topology Appl. 110 (2001), no. 1, 21--24.

\bibitem{BV1} {\sc M.~R. Bridson and K.~Vogtmann}, {\em Automorphisms of automorphism groups of free groups}, Journal of Algebra 229 (2000), pp.~785--792.

\bibitem{BV2}
{\sc M.~R. Bridson and K.~Vogtmann}, {\em Homomorphisms from automorphism
  groups of free groups}, Bull. London Math. Soc., 35 (2003), pp.~785--792.

\bibitem{BV3}
\leavevmode\vrule height 2pt depth -1.6pt width 23pt, {\em Actions of
  automorphism groups of free groups on homology spheres and acyclic
  manifolds}, Comment. Math. Helv. 86 (2011), no. 1, pp.~73--90.

\bibitem{Culler}
{\sc M.~Culler}, {\em Finite groups of outer automorphisms of a free group}, Contributions
to group theory, pp. 197--207, Contemp. Math., 33, Amer. Math. Soc.,
Providence, R.I., 1984.

\bibitem{CV}
{\sc M.~Culler and K.~Vogtmann}, {\em  Moduli of graphs and automorphisms of
  free groups}, Invent. Math. \textbf{84} (1986), no.~1, 91--119.  

\bibitem{FH}
{\sc W.~Fulton and J.~Harris}, Representation Theory: A First Course, Graduate Texts in Mathematics, 129,  Springer-Verlag, New York, 1991.   


\bibitem{Ge}
{\sc S.~M. Gersten}, {\em A presentation for the special automorphism group of
  a free group}, J. Pure Appl. Algebra, 33 (1984), pp.~269--279.

\bibitem{HV}
{\sc A.~Hatcher and K.~Vogtmann}, {\em Homology stability for outer
  automorphism groups of free groups}, Algebr. Geom. Topol, 4 (2004),
  pp.~1253--1272.

\bibitem{HVW}
{\sc A.~Hatcher, K.~Vogtmann, and N.~Wahl}, {\em Erratum to: Homology stability
  for outer automorphism groups of free groups}, Algebraic {\&} Geometric
  Topology, 6 (2006), pp.~573--579.
 
\bibitem{Johnson}
{\sc D. L. Johnson}, {\em Presentations of groups}, Lond. Math. Soc. Student Texts 15, Cambridge University Press, 1997.

\bibitem{kielak}
{\sc D. Kielak}, {\em Outer
automorphism groups of free groups: linear and free representations}, arXiv:1103.1624.

\bibitem{Khramtsov}
{\sc D.~G. Khramtsov}, {\em Finite groups of automorphisms of free groups}, Mat. Zametki
38 (1985), no. 3, pp. 386-392.

\bibitem{Kh}
{\sc D.~G. Khramtsov}, {\em Outer automorphisms of free groups (russian)},
  Teoretiko-gruppovye issledovaniia: sbornik nauchnykh trudov,  (1990),
  pp.~95--127.
  
  \bibitem{KL}
  {\sc P.~B. Kleidman and M.W. Liebeck}, {\em On a theorem of Feit and Tits}, Proc. Amer. Math. Soc. 107 (1989), no. 2, 315–322.

\bibitem{LSeitz}
{\sc V. Landazuri and G.M. Seitz}, {\em On the minimal degrees of
projective representations of the finite Chevalley groups},
J. Algebra, 32 (1974), 418--443.
\bibitem{Liebeck}
{\sc M.~Liebeck}, {\em Graphs whose full automorphism group is a symmetric group.}, J. Austral. Math. Soc. Ser. A, 44 (1988), no. 1, pp.~46--63.

\bibitem{Rapinchuk}{\sc A.~Potapchik  and A.~Rapinchuk}, {\em Low-dimensional linear representations of $\aut $, $n\geq 3$}, Trans. Am. Math. Soc. 352 (1999) no. 3, pp.~1437--1451.

\bibitem{SJ}
{\sc S.~Sasao and C.~Joon-Sim}, {\em On self-homotopy equivalences of covering
  spaces}, Kodai Math. J, 13 (1990), pp.~231--240.

 
\end{thebibliography}

\end{document}